%%%%%%%%%%%%%%%%%%%%%%%%%%%%%%%%%%%%%%%%%%%%%%%%%%%%%%%%%%%%%%%%%%
%
% file: bands_1.22_arxiv.tex
%
%
% Title: The subpower membership problem for bands
% Authors: Steindl
%        
%         
% Version: 2016-04-02
%
% as uploaded to arxiv, 2016-04-02
%
%%%%%%%%%%%%%%%%%%%%%%%%%%%%%%%%%%%%%%%%%%%%%%%%%%%%%%%%%%%%%%%%%%
\documentclass{amsart}

\usepackage{calc} % for \widthof{ <some expression> }

\usepackage[T1]{fontenc}
\usepackage{lmodern} % for smooth fonts, compatible with T1
\usepackage{amsmath,amsthm,amssymb}
\usepackage{enumitem}

\usepackage[
colorlinks=true,
linkcolor=blue
]{hyperref}
\usepackage{algorithm}
\usepackage{algpseudocode}

\usepackage{tikz}

\setlist[enumerate,1]{label=\textnormal{(\alph*)},ref=\textnormal{(\alph*)}}
\setlist[enumerate,2]{label=\textnormal{(\arabic*)},ref=\textnormal{(\arabic*)}}
\newcommand{\lref}[1]{\ref{#1}} % refer to enumerate, e.g. item~\lref{} of Lemma~\ref{}
\newcommand{\tlref}[1]{~\ref{#1}} % refer to enumerate of thm, e.g. Theorem~\ref{}\tlref{}
\newcommand{\symblist}[2]{}

%~~~~~~~~~ ~~~~~~~~~ ~~~~~~~~~ ~~~~~~~~~ ~~~~~~~~~ ~~~~~~~~~ ~~~~~~~~~ ~~~~~~~~~ ~~~~~~~~~ ~~~~~~~~~
\algnewcommand\algorithmicinput{\textbf{Input:}}
\algnewcommand\Input{\item[\algorithmicinput]}
\algnewcommand\algorithmicoutput{\textbf{Output:}}
\algnewcommand\Output{\item[\algorithmicoutput]}
\algnewcommand\algorithmicbreak{\textbf{break}}
\algnewcommand\Break{\algorithmicbreak}
\algnewcommand\algorithmiccontinue{\textbf{continue}}
\algnewcommand\Continue{\algorithmiccontinue}
\algnewcommand\algorithmicset{\textbf{set}}
\algnewcommand\Set[1]{#1}
\algnewcommand\algorithmicgoto{\textbf{goto}}
\algnewcommand\Goto[0]{\algorithmicgoto{} }

\newcommand{\dprobfont}[1]{\textsc{#1}}
\newcommand{\dprobfontinp}[1]{\textsc{#1}}

\newlength{\lenl}\setlength{\lenl}{2em} 
\newlength{\lenm}
\newlength{\lenr}

\newcommand{\vproblem}[5]{
\setlength{\lenm}{\maxof{\widthof{#2\quad}}{\widthof{#4\quad}}}
\setlength{\lenr}{\textwidth-\lenl-\lenm}
\begin{flalign*}\arraycolsep=0pt
\begin{tabular}{@{}p{\lenl}@{}p{\lenm}@{}p{\lenr}@{}}
& \multicolumn{2}{@{}l@{}}{#1} \\
& \begin{minipage}[t]{\lenm}#2\vspace{1.5pt}\end{minipage} 
& \begin{minipage}[t]{\lenr}#3\vspace{1.5pt}\end{minipage} \\ 
& \begin{minipage}[t]{\lenm}#4\end{minipage} 
& \begin{minipage}[t]{\lenr}#5\end{minipage} \\
\end{tabular}&&
\end{flalign*}
}
\newcommand{\dproblem}[3]{\vproblem{\dprobfontinp{#1}}{Input:}{#2}{Problem:}{#3}}
\newcommand{\cproblem}[3]{\vproblem{\dprobfontinp{#1}}{Input:}{#2}{Output:}{#3}}

%~~~~~~~~~ ~~~~~~~~~ ~~~~~~~~~ ~~~~~~~~~ ~~~~~~~~~ ~~~~~~~~~ ~~~~~~~~~ ~~~~~~~~~ ~~~~~~~~~ ~~~~~~~~~
% For comparison, the existing overlap macros:
% \def\llap#1{\hbox to 0pt{\hss#1}}
% \def\rlap#1{\hbox to 0pt{#1\hss}}
\def\clap#1{\hbox to 0pt{\hss#1\hss}}

\def\mathrlap{\mathpalette\mathrlapinternal}
\def\mathclap{\mathpalette\mathclapinternal}

\def\mathrlapinternal#1#2{%
\rlap{$\mathsurround=0pt#1{#2}$}}
\def\mathclapinternal#1#2{%
\clap{$\mathsurround=0pt#1{#2}$}}

% underbrace without additional spacing:
%\newcommand{\underbr}[2]{\rlap{$\underbrace{\phantom{#1}}_{\mathclap{#2}}$}#1} 
\newcommand{\underbr}[2]{\underbrace{#1}_{\mathclap{#2}}}

\newcommand{\ptime}{\textnormal{P}}
\newcommand{\np}{\textnormal{NP}}
\newcommand{\pspace}{\textnormal{PSPACE}}

\newcommand{\exptime}{\textnormal{EXPTIME}}

\newcommand{\smp}{\dprobfont{SMP}}
\newcommand{\sat}{\dprobfont{SAT}}

\newcommand{\gap}{\textnormal{GAP}}
\newcommand{\true}{\text{true}}
\newcommand{\false}{\text{false}}

\newcommand{\N}{\mathbb{N}}

\newcommand{\alg}[1]{#1}
\newcommand{\var}{\mathcal}
\newcommand{\subuni}[1]{ {\langle #1 \rangle} }

\newcommand{\sst}{\mid}
\newcommand{\rg}[1]{[#1]}
\newcommand{\gj}{\mathrel{\mathcal{J}}}
\newcommand{\gh}{\mathrel{\mathcal{H}}}
\newcommand{\gr}{\mathrel{\mathcal{R}}}
\newcommand{\gl}{\mathrel{\mathcal{L}}}
\newcommand{\gd}{\mathrel{\mathcal{D}}}

\newcommand{\gjle}{\leq_{\mathcal{J}}}
\newcommand{\gjge}{\geq_{\mathcal{J}}}
\newcommand{\gjless}{<_{\mathcal{J}}}
\newcommand{\gjgt}{>_{\mathcal{J}}}

\newcommand{\grle}{\leq_{\mathcal{R}}}

\newcommand{\glle}{\leq_{\mathcal{L}}}

\newcommand{\grc}{\mathrel{\mathcal{R}}}
\newcommand{\glc}{\mathrel{\mathcal{L}}}
\newcommand{\gjc}{\mathrel{\mathcal{J}}}
\newcommand{\gjlec}{\le_{\mathcal{J}}}
\newcommand{\gjgec}{\ge_{\mathcal{J}}}

\newcommand{\Tsg}{T}

\newcommand{\qidand}{\mathrel{\&}} % and in quasi-identities
\newcommand{\qidarr}{\longrightarrow} % rightarrow in quasi-identities
\newcommand{\dwrd}{\bar}
\newcommand{\dexp}{\overline}
\newcommand{\dsgp}[1]{\bar{#1}}

\newtheorem{thm}{Theorem}[section]

\newtheorem{lma}[thm]{Lemma}
\newtheorem{clr}[thm]{Corollary}

\theoremstyle{definition}

\newtheorem{dfn}[thm]{Definition}

\newtheorem{prb}[thm]{Problem}

\theoremstyle{remark}

\newcommand{\lam}{\textit{$\lambda$}}
\newcommand{\lamd}{\textit{$\dwrd\lambda$}}
\newcommand{\cpinfix}{\dprobfont{Infix}}
\newcommand{\cpsuffix}{\dprobfont{Suffix}}

\newcommand{\qd}{d}
\newcommand{\qe}{e}

\newcommand{\qh}{h}
\newcommand{\qx}{x}
\newcommand{\qy}{y}
\newcommand{\acs}{\hspace{\arraycolsep}}

\begin{document}
\title[The SMP for bands]{The subpower membership problem for bands}
%~~~~~~~~~ ~~~~~~~~~ ~~~~~~~~~ ~~~~~~~~~ ~~~~~~~~~ ~~~~~~~~~ ~~~~~~~~~ ~~~~~~~~~ ~~~~~~~~~ ~~~~~~~~~
\author{Markus Steindl}
\address{Institute for Algebra, Johannes Kepler University Linz, Altenberger St 69, 4040 Linz, Austria}
\address{Department of Mathematics, University of Colorado Boulder, Campus Box 395, Boulder, Colorado 80309-0395}
\email{\href{mailto:markus.steindl@colorado.edu}{markus.steindl@colorado.edu}}
\thanks{Supported by the Austrian Science Fund (FWF): P24285}
%~~~~~~~~~ ~~~~~~~~~ ~~~~~~~~~ ~~~~~~~~~ ~~~~~~~~~ ~~~~~~~~~ ~~~~~~~~~ ~~~~~~~~~ ~~~~~~~~~ ~~~~~~~~~
\subjclass[2000]{Primary: 20M99; Secondary: 68Q25}
\date{\today}
\keywords{idempotent semigroups, subalgebras of powers, membership test, computational complexity, NP-complete, quasivariety}
%~~~~~~~~~ ~~~~~~~~~ ~~~~~~~~~ ~~~~~~~~~ ~~~~~~~~~ ~~~~~~~~~ ~~~~~~~~~ ~~~~~~~~~ ~~~~~~~~~ ~~~~~~~~~
\begin{abstract}
Fix a finite semigroup $S$ and let $a_1,\ldots,a_k, b$ be tuples in a direct power $S^n$.
The subpower membership problem (\smp) for $S$ asks whether $b$ can be generated by $a_1,\ldots,a_k$. For bands (idempotent semigroups), we provide a dichotomy result:
if a band $S$ belongs to a certain quasivariety, then $\smp(S)$ is in \ptime;
otherwise it is \np-complete.

Furthermore we determine the greatest variety of bands all of whose finite members induce a tractable $\smp$. Finally we present the first example of two finite algebras that generate the same variety and have tractable and \np-complete \smp{}s, respectively.
\end{abstract}
\maketitle

\section{Introduction}
\label{sec:1}

How hard is deciding membership in a subalgebra of a given algebraic structure?
This problem occurs frequently in symbolic computation.
For instance if $\alg F$ is a fixed field and we are given
vectors $a_1,\ldots,a_{k},b$ in a vector space $\alg F^n$,
we often want to decide whether $b$ is in the linear span of $a_1,\ldots,a_k$.
This question can be answered using Gaussian elimination in 
polynomial time in $n$ and $k$.

Depending on the formulation of the membership problem, 
the underlying algebra may be part of the input.
For example if we are given transformations on $n$ elements,
we may have to decide whether they generate a given transformation under composition.
These functions belong to the full transformation semigroup $\Tsg_n$ on $n$ elements.
In this case $n$ and the algebra $\Tsg_n$ are part of the input.
Kozen proved that this problem is \pspace-complete \cite{Kozen1977b}.
However, if we restrict the input to permutations on $n$ elements, then the problem is in \ptime{}
using Sims' stabilizer chains \cite{FHL1980}.

In this paper we investigate the membership problem formulated by Willard in 2007 \cite{Willard2007}. 
Fix a finite algebra $S$ with finitely many basic operations. 
We call a subalgebra of some direct power of $S$ a \emph{subpower} of $S$.
The \emph{subpower membership problem} $\smp(S)$ is the following decision problem:
\dproblem{SMP(\textit{S})}{
$\{a_1,\ldots,a_k\} \subseteq S^n,b \in S^n$}{
Is $b$ in the subalgebra of $S^n$ generated by $\{a_1,\ldots,a_k\}$?}
In this problem the algebra $S$ is not part of the input.

The \smp{} is of particular interest within the study of the constraint satisfaction problem (CSP) \cite{IMMVW2010}.
Recall that in a CSP instance the goal is to assign values of a given domain to a set of variables such that each constraint is satisfied. Constraints are usually represented by constraint relations.
In the algebraic approach to the CSP, each relation is regarded as a subpower of a certain finite algebra $S$.
Instead of storing all elements of a constraint relation, we can store a set of generators. Checking whether a given tuple belongs to a constraint relation represented by its generators is precisely the 
\smp{} for $S$.

The input size of $\smp(S)$ is essentially $(k+1)n$.
We can always decide the problem using a straightforward closure algorithm in time exponential in $n$.
For some algebras there is no faster algorithm. This follows from a result of Kozik \cite{Kozik2008}, who actually constructed a finite algebra with \exptime-complete \smp{}.
However, there are structures whose \smp{} is considerably easier.
For example, the \smp{} for a finite group is in \ptime{} by an adaptation of Sims' stabilizer chains~\cite{Zweckinger2013}.
Mayr \cite{Mayr2012} proved that the \smp{} for Mal'cev algebras is in~\np.
He also showed that the \smp{} for every finite Mal'cev algebra which has prime power size and a nilpotent reduct is in \ptime{}.

In this paper we investigate the \smp{} for \emph{bands} (idempotent semigroups).
For semigroups in general the \smp{} is in \pspace{} by a result of Bulatov, Mayr, and the present author \cite{BMS2015}.
There is no better upper bound since various semigroups were shown to have a \pspace-complete \smp{},
including the full transformation semigroup on $3$ or more letters and
the $6$-element Brandt monoid \cite{BMS2015,smppspace}.
For commutative semigroups, however, the \smp{} is in \np.
In~\cite{BMS2015} a dichotomy result was provided:
if a commutative semigroup $S$ embeds into a direct product of a Clifford semigroup and a nilpotent
semigroup, then $\smp(S)$ is in P; otherwise it is \np-complete~\cite{BMS2015}.
These were also the first algebras known to have an \np-complete \smp.
Similar to the case of commutative semigroups,
we will establish a \ptime/\np-complete dichotomy for the \smp{} for bands in the present paper.
Before that we introduce the notions of variety, identity, and quasiidentity.

Let $v,w$ be words over variables $x_1,\ldots,x_k$.
For a semigroup $S$ we define the function
\symblist{wS}{$w^S$}$w^S \colon S^k \to S$ such that,
when applied to $(s_1,\ldots,s_k)\in S^k$, 
it replaces every occurrence of $x_i$ in $w$ by $s_i$ for all $i\in\{1,\ldots,k\}$ and computes the resulting product.
We refer to $w^S$ as the ($k$-ary) \emph{term function}\index{term function} induced by $w$.
An expression of the form \symblist{v=w}{$v \approx w$}%
$v \approx w$ is called an \emph{identity}\index{identity!equational law} over $x_1,\ldots,x_k$.
A semigroup $S$ 
\emph{satisfies} the identity\index{satisfaction!of an identity}
$v \approx w$ (in symbols \symblist{=}{$\models$}%
$S \models v\approx w$) if 
\[ \forall s_1,\ldots,s_k \in S \colon v^S(s_1,\ldots,s_k)=w^S(s_1,\ldots,s_k). \]
A class $\var V$ of semigroups is called a \emph{variety}\index{variety of semigroups} if
there is a set $\Sigma$ of identities such that $\var V$ contains precisely the semigroups that satisfy all identities in $\Sigma$.

Given identities $v_1\approx w_1,\ldots,v_m \approx w_m$ and $p\approx q$ over $x_1,\ldots,x_k$, we call
an expression $\mu$ of the form
\begin{equation*}\label{eq_example1_qid} 
v_1\approx w_1 \qidand\ldots\qidand v_m \approx w_m \qidarr p\approx q 
\end{equation*}
a \emph{quasiidentity}\index{quasiidentity} over $x_1,\ldots,x_k$. A semigroup $S$ 
\emph{satisfies}\index{satisfaction!of a quasiidentity} the quasiidentity 
$\mu$ (in symbols \symblist{=}{$\models$}$S \models \mu$) if for all $\bar s = (s_1,\ldots,s_k)$ in $S^k$,
\[ v_1^S(\bar s)=w_1^S(\bar s),\ldots,v_m^S(\bar s)=w_m^S(\bar s) \quad\text{implies}\quad p^S(\bar s)=q^S(\bar s). \]
A class $\var V$ of semigroups is a \emph{quasivariety}\index{quasivariety of semigroups} if
there is a set $\Sigma$ of quasiidentities such that $\var V$ contains precisely the semigroups that satisfy all identities in $\Sigma$.

Green's equivalences are denoted by $\gl,\gr,\gj,\gd,\gh$ \cite[p.~45]{Howie1995}. For the preorders $\glle,\grle,\gjle$ see \cite[p.~47]{Howie1995}.

The dichotomy result for the \smp{} for bands is based upon the following two quasiidentities.
\hypertarget{ht:lam}{}
\begin{equation}\label{eq:q1p}\tag{\lam}
\arraycolsep=0pt
\left(\begin{array}{c}
\begin{array}{rl}
\qd \qx \qy \qe {}&{}\approx \qd \qe \\
\qh \qx {}&{}\approx \qx \\
\qh \qe {}&{}\approx \qe \end{array}\\
\qd \gjle \qe \gjle \qx, \qy
\end{array}\right) \qidarr \qd \qx \qe \approx \qd \qe.
\end{equation}
\hypertarget{ht:lamd}{}
\begin{equation}
\arraycolsep=0pt
\label{eq:q1pd}
\tag{\lamd}
\left(\begin{array}{c}\begin{array}{rl}
\qe \qy \qx \qd &{}\approx \qe \qd \\
\qx \qh &{}\approx \qx \\
\qe \qh &{}\approx \qe 
\end{array} \\
\qd \gjle \qe \gjle \qx, \qy
\end{array}\right)\qidarr \qe \qx \qd \approx \qe \qd.
\end{equation}

\noindent
We will see that for every band the last condition on the left hand side of both \lam{} and \lamd{} is equivalent to
\[ \qd \qe \qd \approx \qd \qidand \qe \qx \qe \approx \qe  \qidand  \qe \qy \qe \approx \qe. \]
This will follow from Lemma~\ref{lma:band_rule1}.
Thus the atomic formulas of \lam{} and \lamd{} are really identities.
Also note that \lamd{} is the quasiidentity obtained when each word of \lam{} is reversed. 
We obtain the following observation.

\begin{lma}\label{lma:dualsgp_dualqid}
A semigroup $S$ satisfies $\dsgp \lambda$ if and only if the dual semigroup $\dsgp S$ satisfies $\lambda$.
\end{lma}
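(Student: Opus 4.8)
The plan is to reduce the lemma to one elementary fact about term functions. Recall that the dual semigroup $\dsgp S$ has the same underlying set as $S$ and multiplication $a\cdot_{\dsgp S}b=b\cdot_S a$, and that for a word $w$ over $x_1,\ldots,x_k$ we write $\dwrd w$ for its reversal. The first step is to prove that $w^{\dsgp S}(\bar s)=(\dwrd w)^S(\bar s)$ for every such $w$ and every $\bar s=(s_1,\ldots,s_k)\in S^k$. I would argue by induction on $|w|$: the case $w=x_i$ is immediate because $\dwrd{x_i}=x_i$, and if $w=w'x_i$ then the definition of $\cdot_{\dsgp S}$ gives $w^{\dsgp S}(\bar s)=s_i\cdot_S(w')^{\dsgp S}(\bar s)$, which by the induction hypothesis equals $s_i\cdot_S(\dwrd{w'})^S(\bar s)=(x_i\,\dwrd{w'})^S(\bar s)=(\dwrd w)^S(\bar s)$, the last step because $x_i\,\dwrd{w'}$ is the reversal of $w'x_i$.

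The second step is the observation that the preorder $\gjle$ is unaffected by dualization. For $a,b\in S$, the relation $a\gjle b$ says that $a$ lies in the two-sided ideal of $S$ generated by $b$, namely $\{b\}\cup bS\cup Sb\cup SbS$; this subset of $S$ is the same when the multiplication is replaced by its opposite (the sets $bS$ and $Sb$ simply interchange). Hence $a\gjle b$ holds in $S$ if and only if it holds in $\dsgp S$, and moreover each order atom occurring in \lam{} and \lamd{} is literally its own reversal.

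Combining the two steps concludes the argument. By definition $\dsgp S\models\lambda$ asserts that for every $\bar s\in S^k$, if $(\qd\qx\qy\qe)^{\dsgp S}(\bar s)=(\qd\qe)^{\dsgp S}(\bar s)$, $(\qh\qx)^{\dsgp S}(\bar s)=\qx^{\dsgp S}(\bar s)$, $(\qh\qe)^{\dsgp S}(\bar s)=\qe^{\dsgp S}(\bar s)$, and $\qd\gjle\qe\gjle\qx,\qy$ holds in $\dsgp S$, then $(\qd\qx\qe)^{\dsgp S}(\bar s)=(\qd\qe)^{\dsgp S}(\bar s)$. Rewriting each term function via the first step turns these into equalities of reversed words evaluated in $S$ --- for example the premise $(\qd\qx\qy\qe)^{\dsgp S}(\bar s)=(\qd\qe)^{\dsgp S}(\bar s)$ becomes $(\qe\qy\qx\qd)^S(\bar s)=(\qe\qd)^S(\bar s)$ and the conclusion becomes $(\qe\qx\qd)^S(\bar s)=(\qe\qd)^S(\bar s)$ --- while the second step makes the order premise equivalent to $\qd\gjle\qe\gjle\qx,\qy$ in $S$. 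The resulting implication, quantified over all $\bar s\in S^k$, is word for word the statement $S\models\dsgp\lambda$, since $\dsgp\lambda$ (that is, \lamd{}) is exactly \lam{} with every word reversed. As every step is an equivalence, both directions of the lemma follow simultaneously.

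I do not expect a genuine obstacle. The one point needing care is the order atoms $\gjle$, which make \lam{} and \lamd{} quasiidentities only formally: one must check, as in the second step, that reversal fixes such an atom and that $\gjle$ is interpreted the same way in a semigroup and in its dual. Alternatively one could first invoke the remark following \lamd{} --- valid for bands --- that replaces these atoms by genuine identities; but the argument above requires no such restriction and therefore gives the lemma for arbitrary semigroups.
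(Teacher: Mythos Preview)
Your argument is correct and is exactly the straightforward unpacking the paper has in mind; the paper's own proof is the single word ``Straightforward.'' Your explicit handling of the $\gjle$ atoms under dualization is the only subtlety, and you treat it correctly.
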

\begin{proof}Straightforward.\end{proof}

We state the main result of this paper.

\begin{thm}\label{thm:dichotomy_bands_intro}
If a finite band $S$ satisfies \hyperlink{ht:lam}{\lam} and \hyperlink{ht:lamd}{\lamd}, then $\smp(S)$ is in \ptime. 
Otherwise $\smp(S)$ is \np-complete.
\end{thm}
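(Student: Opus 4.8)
The plan is to split the theorem into its two halves — the hardness part and the tractability part — and to leverage the structure theory of bands (the semilattice of rectangular-band components, Green's relations, and the characterization of the last condition in \lam{} and \lamd{} as the identities $\qd\qe\qd\approx\qd$, $\qe\qx\qe\approx\qe$, $\qe\qy\qe\approx\qe$ that is promised from Lemma~\ref{lma:band_rule1}).

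\textbf{The \np-completeness direction.}
First I would show that if a finite band $S$ fails \lam{} or \lamd{}, then $\smp(S)$ is \np-hard. By Lemma~\ref{lma:dualsgp_dualqid} and the fact that $\smp(S)\equiv\smp(\dsgp S)$ (reversing all coordinates is a trivial reduction), it suffices to treat the case where $S$ fails \lam{}. Failing \lam{} means there are elements $d,e,x,y$ witnessing the hypotheses (which, by the remark after \lamd{}, amount to $d\gjle e\gjle x,y$ together with $dxye=de$, $hx=x$, $he=e$ for a suitable $h$) but with $dxe\neq de$. The goal is to use such a configuration as a gadget to encode a known \np-complete problem — the natural candidate, mirroring \cite{BMS2015}, is a reduction from a constraint-style problem such as one-in-three-\sat{} or a graph problem like the independent set / exact cover family, where the ``forbidden'' product $dxe\neq de$ lets one coordinate act as a switch that propagates a local choice globally. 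Concretely I would build tuples in a power $S^n$ whose generated subalgebra contains $b$ iff the \sat{}-type instance is satisfiable, using the three identities to guarantee that partial products collapse in a controlled way except where the witness $x$ (or $y$) intervenes. Combined with the \np upper bound for bands — which should follow from the general semigroup bound refined to bands, or be established directly by showing short witnesses of membership exist — this yields \np-completeness.

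\textbf{The \ptime{} direction.}
For a band $S$ satisfying both \lam{} and \lamd{}, I would design a polynomial-time algorithm for $\smp(S)$. The strategy is the usual ``compressed closure'': instead of computing the full subalgebra generated by $a_1,\dots,a_k$ in $S^n$, maintain a polynomial-size data structure that captures enough information to decide whether $b$ lies in it. The band structure helps: project onto the semilattice $S/\gj$ first (this is a commutative idempotent quotient, where the \smp{} is easy), determining for each coordinate which $\gj$-class the candidate $b$ must sit in and which products of generators can reach it; then within each fixed $\gj$-class the restriction is essentially a rectangular band, where membership is governed by independent ``left'' ($\gr$) and ``right'' ($\gl$) coordinates and thus reduces to something like reachability or a transversal computation that is polynomial. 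The quasiidentities \lam{} and \lamd{} are precisely what is needed to show that a word $w(a_1,\dots,a_k)$ evaluating to $b$ can be assumed to have a normal form of polynomially bounded ``complexity'' — e.g.\ that one need only consider products in which each generator's left-part and right-part contributions can be summarized, so that the reachable set in each coordinate block is determined by polynomially much data. I would formalize this via a sequence of lemmas: (i) a normal-form / rewriting lemma for products in a band satisfying \lam{} and \lamd{}, bounding how the \gr- and \gl-components evolve; (ii) a ``local-to-global'' lemma showing the reachable tuples are exactly those consistent with the projections onto all pairs (or bounded tuples) of coordinates, so membership reduces to a polynomial collection of small subproblems; (iii) assembling these into the algorithm and verifying its running time is polynomial in $kn$.

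\textbf{Main obstacle.}
I expect the genuinely hard part to be the tractability direction, and within it the normal-form lemma that makes \lam{} and \lamd{} ``bite'': showing that in a band obeying these two quasiidentities, every product of generators can be rewritten, without changing its value, into one of a restricted form whose effect on each coordinate is computable from polynomially much bookkeeping. The subtlety is that \lam{} and \lamd{} are conditional identities with a $\gjle$-hypothesis, so the rewriting can only be applied when the relevant elements sit in the right $\gj$-order relationship; managing this across all $n$ coordinates simultaneously — where the same syntactic product induces different $\gj$-configurations in different coordinates — is where the real work lies. The \np-hardness direction should be comparatively routine once the right gadget is identified, essentially adapting the technique of \cite{BMS2015}, though pinning down a single uniform reduction that covers every band failing \lam{} (as opposed to a case analysis over the finitely many ``minimal'' bad configurations) may still require care.
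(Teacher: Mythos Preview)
Your overall split and the hardness half are on target: the paper also reduces \sat{} to $\smp(S)$ using a witness $d,e,x,y,h$ of $S\not\models\lam$ (after first replacing these by a cleaner witness set generating one of four explicit subbands, Lemma~\ref{lma:T}), and the \np{} upper bound is exactly ``short witnesses exist'' --- obtained via the Gerhard--Petrich normal forms $h_n$ (Lemma~\ref{lma:smp_band_in_np_2}, Theorem~\ref{thm:smp_band_in_np}), not via a refinement of the general semigroup \pspace{} bound.

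The \ptime{} direction is where your plan diverges from the paper, and where I see a gap. Your items (i)--(ii) --- a normal-form/rewriting lemma and then a local-to-global lemma asserting that membership is determined by projections onto all pairs (or bounded tuples) of coordinates --- are not what the paper does, and (ii) in particular is not justified: nothing in \lam{} and \lamd{} obviously forces subpowers of $S$ to be determined by their bounded-arity projections, and you give no mechanism for why it should. The difficulty you correctly flag (the $\gjle$-hypothesis of \lam{} holds in some coordinates and not others for the same syntactic product) is precisely what makes a uniform normal form across all coordinates hard to extract.

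The paper sidesteps this entirely. It first observes that $b\in\subuni{A}$ iff there exist $x,y\in\subuni{A}$ with $x\gl b$ and $y\gr b$ (then $b=yx$); so $\smp(S)$ reduces to the computational problem $\cpsuffix(S)$ of \emph{constructing} some $x\in\subuni{A}$ with $x\gl b$, together with its dual. $\cpsuffix(S)$ is solved by a greedy descent in the $\gjle$-order (Algorithm~\ref{alg:psuffix}): start with any $x\in A$ with $bx=b$ and, while $x\not\gl b$, find $a\in A$ and $y\in\subuni{A}$ with $bayx=b$ and $a\not\gjge x$, replacing $x$ by $ayx$; each step strictly drops $x$ in $\gjle$ on some coordinate, so there are $O(n)$ iterations. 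The inner search for $y$ is the auxiliary problem $\cpinfix(S)$ (find $y\in\subuni{A}$ with $dye=c$ under suitable $\gj$-constraints), solved by a second greedy loop (Algorithm~\ref{alg:cp_infix}). The only place \lam{} enters is Lemma~\ref{lma:cp_infix_1}, two short calculations showing that under \lam{} solutions of $\cpinfix$ can be \emph{combined} ($xy$ and $z$ solutions with a common left identity $\Rightarrow xz$ a solution) and \emph{shortened} ($xyz$ a solution with $x\gjle y,z$ $\Rightarrow xz$ a solution). These are the lemmas that make the greedy search complete. So the key idea you are missing is not a global normal form but this pair of local combination/reduction rules for the $\cpinfix$ problem, which is what \lam{} actually buys.
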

\noindent
For the proof see Section~\ref{sec:np_completeness_qvs}.
This means that under the assumption $\ptime\ne\np$, the finite bands with tractable \smp{} are precisely the finite members of the quasivariety determined by \lam{} and \lamd.
This quasivariety is not a variety by the following theorem.

\begin{thm}\label{thm:genvar_diffcomp}
There are a 9-element band $S_9$ and a 10-element band $S_{10}$ such that
\begin{enumerate}
\item\label{it1_thm:genvar_diffcomp} $S_9$ and $S_{10}$ generate the same variety,
\item\label{it2_thm:genvar_diffcomp} $S_9$ is a homomorphic image of $S_{10}$, and
\item\label{it3_thm:genvar_diffcomp} $\smp(S_{10})$ is in \ptime, whereas $\smp(S_9)$ is \np-complete.
\end{enumerate}
\end{thm}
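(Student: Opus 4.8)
The plan is to construct $S_{10}$ as a band built from a carefully chosen set of "tagged" words so that it fails to lie in the quasivariety determined by \lam{} and \lamd{} only because of the preorder hypotheses, while $S_9$ is obtained by collapsing a single element so that the bad configuration disappears but no new identity is introduced. Concretely, I would first recall the structure theory of bands: every band is a semilattice $Y$ of rectangular bands $S_\alpha$ ($\alpha \in Y$), and the variety generated by a band is controlled by finitely many identities (bands are locally finite and the lattice of varieties of bands is well understood). So to establish \ref{it1_thm:genvar_diffcomp} it suffices to exhibit $S_9$ and $S_{10}$ that (a) are homomorphic images of one another as in \ref{it2_thm:genvar_diffcomp}, which already gives $\var(S_9) \subseteq \var(S_{10})$, and (b) satisfy a common finite identity basis, which I would verify by a direct but finite check on the free band on a few generators, or by placing both inside an explicit small band variety (e.g. a variety of the form $\mathbf{LRB}$-type or a join of left/right normal bands) whose identity basis is known.

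Second, for \ref{it2_thm:genvar_diffcomp}, I would produce $S_{10}$ with a distinguished pair of $\gj$-related elements, say $e$ and a "twin" $e'$, that are identified under a congruence $\theta$ with exactly one nontrivial block $\{e,e'\}$, yielding $S_{10}/\theta \cong S_9$. The congruence must be chosen so that in $S_9$ the merged element $\bar e$ no longer admits the witness tuple that violates \lam{}; the natural way to force this is to arrange that the violation in $S_{10}$ relies on having two $\gj$-incomparable copies above a common element $d$, and collapsing them destroys exactly that incomparability.

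Third, and this is where the real work lies, I must separately certify the two complexity claims in \ref{it3_thm:genvar_diffcomp} via Theorem~\ref{thm:dichotomy_bands_intro}. For the \ptime{} half I would verify that $S_{10} \models \lam$ and $S_{10} \models \lamd$; using the remark after the statement of \lam{}, the preorder side reduces to the three idempotent identities $\qd\qe\qd \approx \qd$, $\qe\qx\qe \approx \qe$, $\qe\qy\qe\approx\qe$, so this becomes a finite case analysis over the $10^{?}$ relevant tuples, ideally organized by Green's classes so only a handful of cases survive. For the \np-complete half I would exhibit a single assignment in $S_9$ satisfying all hypotheses of \lam{} (or of \lamd) but violating the conclusion $\qd\qx\qe \approx \qd\qe$ — i.e., I need the merged element $\bar e$ to sit $\gj$-below two elements $\qx,\qy$ with $\bar e \gjle \bar e$ (automatic), with $\bar d \bar e \bar d = \bar d$, $\bar e \bar x \bar e = \bar e$, $\bar e \bar y \bar e = \bar e$, $\bar d \bar x \bar y \bar e = \bar d \bar e$, $\bar h \bar x = \bar x$, $\bar h \bar e = \bar e$, yet $\bar d \bar x \bar e \ne \bar d \bar e$ — and symmetrically check that \lamd{} also fails (since failing only one would already make $\smp(S_9)$ \np-complete, but for a clean statement I would check both, or note that $S_9$ can be taken self-dual).

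The main obstacle I anticipate is the simultaneous balancing act: the quotient map and the common-variety requirement pull $S_{10}$ and $S_9$ close together, while the complexity dichotomy demands that the quasiidentity status flip across the quotient — so the failure of \lam{} in $S_9$ must be \emph{created} purely by the identification, not inherited. Getting the smallest such example (9 and 10 elements) likely requires starting from a known small band witnessing \np-completeness in the literature on \smp{} for semigroups (cf.\ \cite{BMS2015,smppspace}) and then reverse-engineering a 10-element cover that satisfies \lam{} and \lamd{}; verifying all the band axioms and the quasiidentities for the cover is the tedious-but-routine part, whereas finding the right cover is the creative step. I would present the two multiplication tables explicitly (organized as a semilattice of rectangular bands), then discharge \ref{it1_thm:genvar_diffcomp}–\ref{it3_thm:genvar_diffcomp} as three short verifications appealing to Theorem~\ref{thm:dichotomy_bands_intro} and the known identity basis of the ambient band variety.
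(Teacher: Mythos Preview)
Your overall strategy---explicitly exhibit the two bands, verify the homomorphism, pin down the common variety, and then apply Theorem~\ref{thm:dichotomy_bands_intro} by checking \lam{} and \lamd{} on each---is exactly what the paper does. But your first two paragraphs have the direction reversed: you write that $S_{10}$ ``fails to lie in the quasivariety'' and that collapsing to $S_9$ makes ``the bad configuration disappear'', whereas in fact $S_{10}$ must \emph{satisfy} \lam{} and \lamd{} (so that $\smp(S_{10})\in\ptime$) while the quotient $S_9$ must \emph{fail} them. You get this right in your third and final paragraphs, but the earlier description would lead you to build the wrong object.

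The paper's construction is also more direct than your sketch suggests. The band $S_9$ is not sourced from~\cite{BMS2015} or~\cite{smppspace}; it is precisely the $9$-element band of Lemma~\ref{lma:T}\tlref{it4_lma:T}\lref{it4d_lma:T}, the smallest witness to $\neg\lam$ already produced in this paper. Then $S_{10}$ is obtained by \emph{splitting} one element of the bottom $\gj$-class (so that $dxye$ becomes distinct from $de$), which breaks the premise of \lam{} and hence removes the witnessing tuple; the quotient map simply re-identifies these two elements. The verification that $S_{10}\models\lam,\lamd$ is done not by a raw tuple search but via Corollary~\ref{clr:witness2}: one checks only that no $9$-element subsemigroup of $S_{10}$ is isomorphic to any of the four bands of Lemma~\ref{lma:T} or their duals. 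Finally, the common variety is not an ``LRB-type'' or normal-band variety as you guess, but $[\dwrd G_3\approx\dwrd I_3]$; membership is checked directly, and strictness by evaluating $G_4$ and $H_4$ at a single $4$-tuple to show that neither band lies in any proper subvariety.
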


\noindent
See Definition~\ref{dfn:s9s10} for the multiplication tables of $S_9$ and $S_{10}$.
This is the first example of two finite algebras which generate the same variety
and induce tractable and \np-complete \smp{}s, respectively.
The band $S_{10}$ is also the first finite algebra known to have a tractable \smp{} and a homomorphic image with \np-hard \smp.
We will prove Theorem~\ref{thm:genvar_diffcomp} in Section~\ref{sec:genvar_diffcomp}.

In the characterization of all varieties of bands \cite{GP1989}, the sequences of words $G_n$, $H_n$, and $I_n$ over $x_1,\ldots,x_n$ for $n\ge2$ play a fundamental role. 
We list the first four words of each sequence.

\begin{dfn}[cf. {\cite[Notation 5.1]{GP1989}}]
\label{dfn:bands_Gn_Hn_In}
\begin{align*}\begin{array}{llll}
n & G_n				& H_n 								 & I_n                                  \\
2 & x_2 x_1		& x_2								 & x_2x_1x_2                            \\
3 & x_3x_1x_2		& x_3x_1x_2x_3x_2					 & x_3x_1x_2x_3x_2x_1x_2					 \\
4 & x_4x_2x_1x_3	& x_4x_2x_1x_3x_4x_2x_3x_2x_1x_3 & x_4x_2x_1x_3x_4x_2x_1x_2x_3x_2x_1x_3	 \\
\end{array}\end{align*}
\end{dfn}
For two words $v$ and $w$ we let $[v\approx w]$ denote the variety of bands that satisfy $v \approx w$.
We call a variety \emph{proper} if it is smaller than the variety of all bands.
By $\dwrd{v}$ we denote the \emph{dual word} of a word $v$, which is the word obtained when 
the order of the variables of $v$ is reversed. %$T\in\{G,H,I\}$

\begin{thm}[{\cite[Diagram 1]{GP1989}}]
Every variety of bands is of the form {$[v\approx w]$} for some identity $v\approx w$.
The lattice of proper varieties of bands is given by Figure~\ref{fig:varieties_of_bands}.
\end{thm}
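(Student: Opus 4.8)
The plan is to follow the route of \cite{GP1989} (the result is also due independently to Birjukov, Fennemore, and Gerhard around 1970), whose engine is an explicit model of the free band. The starting point is the Green--Rees canonical form: every finitely generated free band is finite, and two words $u,v$ represent the same element of the free band on $x_1,\dots,x_n$ exactly when a recursively defined invariant agrees — $u$ and $v$ must have the same \emph{content} (the set of variables occurring in them), the same longest prefix whose content is a proper subset of the whole together with the variable immediately following it, the dual data for suffixes, and, recursively, those prefix words must be equal in the free band and likewise the suffix words. First I would set up this canonical form, so that ``a band satisfies $p\approx q$'' becomes a finite, checkable condition.

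Next I would reduce an arbitrary band identity $p\approx q$ to a normal form. Iterating the head/tail decomposition, a failure of $p\approx q$ to follow from some weaker identity is localized at one of the ``poles'' of the decomposition, and peeling off factors repeatedly exposes, in the extremal cases, precisely the words $G_n$, $H_n$, $I_n$ of Definition~\ref{dfn:bands_Gn_Hn_In} and their duals $\dwrd{G_n},\dwrd{H_n},\dwrd{I_n}$. Concretely one shows that every nontrivial band identity is equivalent, within the variety of bands, to one of a short list of comparisons among $G_m,\dwrd{G_m},H_m,\dwrd{H_m},I_m,\dwrd{I_m}$ for some $m\le n$; for instance $G_2\approx\dwrd{G_2}$ defines semilattices, $H_2\approx I_2$ the rectangular bands, and $G_2\approx I_2$ the left-regular bands. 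In particular each variety of bands is cut out by a single further identity, which gives the first assertion.

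I would then assemble the lattice. For each pair of standard identities, containment of the corresponding varieties is decided by testing, via the canonical form, whether one identity is a consequence of the other in every band — a finite check, since it suffices to verify it on the free band on the variables that appear. Carrying out all these tests, together with the joins and meets (again through the canonical form), produces a partially ordered set which one matches against Figure~\ref{fig:varieties_of_bands}; the familiar varieties (left-zero, right-zero, semilattices, rectangular, normal, left- and right-regular bands, and the higher rungs of the two ladders) serve as landmarks to identify the picture.

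The step I expect to be the real obstacle is \emph{exhaustiveness}: proving that the list of normal forms produced in the second step is complete, so that no band variety is missed. This is the combinatorial heart of \cite{GP1989}; it is handled by induction on word length, driven by the head/tail recursion and careful bookkeeping of how the canonical invariants of $p$ and $q$ can diverge. By comparison, the construction of the free band and the individual containment checks are routine once that scaffolding is in place.
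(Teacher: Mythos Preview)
The paper does not prove this theorem at all: it is quoted from \cite{GP1989} (with attribution to Diagram~1 there) and used as background for the later results, so there is no ``paper's own proof'' to compare your proposal against. Your sketch is a reasonable high-level outline of the Gerhard--Petrich argument from the cited source, but within the present paper the statement is simply imported as a known result.
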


\newcommand{\varbrkts}[1]{{[#1]}}
\newcommand{\varapp}{\approx}
\begin{figure}[p]
\centering
\resizebox{!}{1.0\textheight-4\baselineskip}{
\begin{tikzpicture}[x=1cm, y=1cm]
  \node (c0) at (0,-2.5) {$\varbrkts{x \varapp y}$};
  \node (b1) at (-2,-0.5) {$\varbrkts{xy \varapp x}$};%{$\varbrkts{G_2\varapp H_2}$};
  \node (d1) at (2,-0.5) {$\varbrkts{xy \varapp y}$};%{$\varbrkts{\dwrd G_2 \varapp \dwrd H_2}$};
  \node (c2) at (0,1.5)  {$\varbrkts{xyx \varapp x}$};
  \draw[thick] (c0)--(b1)--(c2)--(d1)--(c0);

  \node (c1) at (0,0)   {$\varbrkts{       xy \varapp yx       }$};

  \node (b2) at (-2,2)  {$\varbrkts{      zxy \varapp zyx      }$};
  \node (d2) at (2,2)   {$\varbrkts{      xyz \varapp yxz      }$};
  \draw[thick] (b2)--(c1)--(d2);

  \node (a3) at (-4,4)  {$\varbrkts{ G_2 \varapp I_2 }$}; 
  \node (c3) at (0,4)   {$\varbrkts{zxyz \varapp zyxz}$};% zxyz \varapp zyxz % \dwrd G_3G_3 \varapp \dwrd H_3H_3
  \node (e3) at (4,4)   {$\varbrkts{\dwrd G_2 \varapp \dwrd I_2 }$};
  
  \draw[thick] (a3)--(b2)--(c3)--(d2)--(e3);
  \draw[thick] (c0)--(c1) (b1)--(b2) (d1)--(d2) (c2)--(c3);
  
  \node (b4) at (-2,6)  {$\varbrkts{\dwrd G_3G_3 \varapp \dwrd I_3H_3 }$};
  \node (d4) at (2,6)   {$\varbrkts{\dwrd G_3G_3 \varapp \dwrd H_3I_3 }$};
  \draw[thick] (a3)--(b4)--(c3)--(d4)--(e3);
  
  \node (a5) at (-4,8)  {$\varbrkts{ G_3 \varapp H_3 }$};
  \node (c5) at (0,8)   {$\varbrkts{\dwrd G_3G_3 \varapp \dwrd I_3I_3 }$};
  \node (e5) at (4,8)   {$\varbrkts{\dwrd G_3 \varapp \dwrd H_3 }$};
  \draw[thick] (a5)--(b4)--(c5)--(d4)--(e5);

  \node (b6) at (-2,10)  {$\varbrkts{\dwrd G_4G_3 \varapp \dwrd H_4I_3 }$};
  \node (d6) at (2,10)   {$\varbrkts{\dwrd G_3G_4 \varapp \dwrd I_3H_4 }$};
  \draw[thick] (a5)--(b6)--(c5)--(d6)--(e5);
  
  \node (a7) at (-4,12)  {$\varbrkts{ G_3 \varapp I_3 }$};
  \node (c7) at (0,12)   {$\varbrkts{\dwrd G_4G_4 \varapp \dwrd H_4H_4 }$};
  \node (e7) at (4,12)   {$\varbrkts{\dwrd G_3 \varapp \dwrd I_3 }$};
  \draw[thick] (a7)--(b6)--(c7)--(d6)--(e7);

  \node (b8) at (-2,14)  {$\varbrkts{\dwrd G_4G_4 \varapp \dwrd I_4H_4 }$};
  \node (d8) at (2,14)   {$\varbrkts{\dwrd G_4G_4 \varapp \dwrd H_4I_4 }$};
  \draw[thick] (a7)--(b8)--(c7)--(d8)--(e7);
  
  \node (a9) at (-4,16)  {$\varbrkts{ G_4 \varapp H_4 }$};
  \node (c9) at (0,16)   {};%{$\varbrkts{\dwrd G_4G_4 \varapp \dwrd I_4I_4 }$};
  \node (e9) at (4,16)   {$\varbrkts{\dwrd G_4 \varapp \dwrd H_4 }$};
  \draw[thick] (a9)--(b8)--(c9)--(d8)--(e9);

  \node (x1) at (-3,17)  {};
  \node (x2) at (-1,17)  {};
  \node (x3) at (1,17)   {};
  \node (x4) at (3,17)   {};
  \draw[thick] (a9)--(x1) (c9)--(x2) (c9)--(x3) (e9)--(x4);
\end{tikzpicture}
}
\caption{%
The lattice of proper varieties of bands, taken from \cite{GP1989}.
For two words $v$ and $w$ the expression $\varbrkts{v\varapp w}$ denotes the variety of bands that satisfy the identity $v\varapp w$.}
\label{fig:varieties_of_bands}
\end{figure}
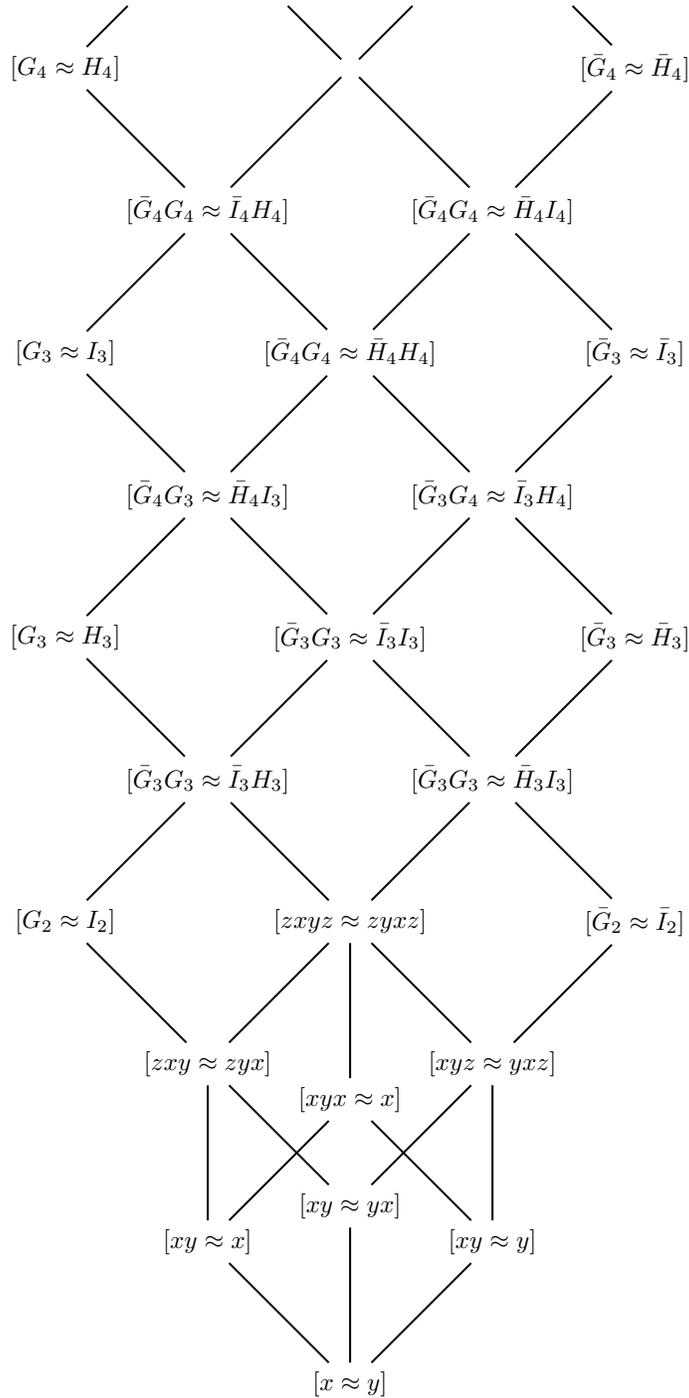

The following result on the \smp{} for bands will be proved in Section~\ref{sec:genvar_diffcomp}.

\begin{thm}\label{thm:greatest_var_smp_tract}
Assume \/$\ptime\neq\np$. Then $[\dwrd G_4 G_4\approx \dwrd H_4 H_4]$ is the greatest variety of bands all of whose finite members induce a tractable \smp{}.
\end{thm}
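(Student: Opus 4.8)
The plan is to prove the theorem in two directions, using the already‑established Theorem~\ref{thm:dichotomy_bands_intro} as the engine: the finite bands with tractable \smp{} are exactly the finite members of the quasivariety $\var Q$ determined by \hyperlink{ht:lam}{\lam} and \hyperlink{ht:lamd}{\lamd}. So I must show (i) every finite band in the variety $\var W := [\dwrd G_4 G_4\approx \dwrd H_4 H_4]$ lies in $\var Q$, and (ii) no strictly larger variety of bands has this property, i.e.\ for every variety $\var V$ of bands with $\var V\not\subseteq\var W$ there is a finite band in $\var V\setminus\var Q$, which by Theorem~\ref{thm:dichotomy_bands_intro} has \np-complete \smp{}.

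For direction (i): both \lam{} and \lamd{} have only identities as atomic formulas (using Lemma~\ref{lma:band_rule1} as noted in the excerpt), so $\var Q$ is defined by quasiidentities all of whose parts are band identities; in particular $\var W\subseteq\var Q$ will follow if $\var W$ satisfies \lam{} and \lamd{} as quasiidentities. I would verify this directly: assume $d,e,x,y$ in a band $S\models \dwrd G_4 G_4\approx \dwrd H_4 H_4$ satisfy the premises $dxye\approx de$, $hx\approx x$, $he\approx e$, and $d\gjle e\gjle x,y$; then using the normal‑form / word‑problem description for $\var W$ (the variety sits below $[\dwrd G_4G_4\approx\dwrd H_4 H_4]$ in Figure~\ref{fig:varieties_of_bands}, whose identity is explicitly $G_4$‑level), reduce $dxe$ and $de$ to a common word. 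Concretely I expect to substitute the generators $d,e,x,y$ into the variables $x_1,\dots,x_4$ of $\dwrd G_4 G_4\approx\dwrd H_4 H_4$ in a way dictated by the $\gjle$‑order and the relations $he\approx e$, $hx\approx x$, $dxye\approx de$, and read off $dxe\approx de$. The dual statement gives \lamd{} from the self‑dual identity $\dwrd G_4G_4\approx\dwrd H_4 H_4$ together with Lemma~\ref{lma:dualsgp_dualqid}. Since $\var Q$ is closed under subalgebras, quotients, and finite products among bands, and since a finite band in $\var W$ is automatically in $\var Q$, every finite member of $\var W$ has tractable \smp.

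For direction (ii): I must exhibit, for each variety $\var V\not\subseteq\var W$, a finite band in $\var V$ failing \lam{} or \lamd. Inspecting Figure~\ref{fig:varieties_of_bands}, the varieties not contained in $\var W = [\dwrd G_4G_4\approx\dwrd H_4 H_4]$ are exactly those lying on or above the three ``$\dwrd G_4G_4\approx\dwrd H_4 H_4$''‑node's complement — in particular $[\dwrd G_3 G_4\approx \dwrd I_3 H_4]$, $[\dwrd G_4 G_4\approx\dwrd I_4 H_4]$, $[\dwrd G_4G_4\approx\dwrd H_4 I_4]$, $[G_4\approx H_4]$, $[\dwrd G_4\approx\dwrd H_4]$, and everything above them. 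By the join‑irreducibility structure of the lattice it suffices to handle the minimal such varieties: for each of these finitely many ``boundary'' varieties I construct an explicit small band (a relational‑product or a Rees‑matrix style example built from the words $G_4,H_4,I_4$ witnessing the respective identity while violating $\dwrd G_4G_4\approx\dwrd H_4 H_4$) and check by hand that it fails \lam{} or \lamd. The self‑dual symmetry of the lattice (reflection swapping $G_n\leftrightarrow\dwrd G_n$, etc.) together with Lemmas~\ref{lma:dualsgp_dualqid} and~\ref{lma:band_rule1} halves the work: a band violating \lam{} has a dual violating \lamd. Combining: if $\var V\not\subseteq\var W$ then $\var V$ contains a finite band outside $\var Q$, hence with \np-complete \smp{}; and if $\var V\subseteq\var W$ then all finite members have tractable \smp{} by (i). Since $\var W$ itself is $\not\subseteq$ any of the tractable‑boundary's strict over‑varieties yet all its finite members are tractable, $\var W$ is the greatest such variety.

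The main obstacle is direction (ii): identifying precisely which nodes of the band lattice fail to lie below $[\dwrd G_4G_4\approx\dwrd H_4 H_4]$ and, for each minimal offender, producing a finite band that satisfies that node's defining identity but violates \lam{} or \lamd. This requires a careful reading of \cite{GP1989}'s word combinatorics for $G_n,H_n,I_n$ and some explicit small‑band bookkeeping; the covering relations in Figure~\ref{fig:varieties_of_bands} tell us exactly how many cases there are, and duality cuts them in half, but each case needs its own witness computation. Everything else — the quasiidentity check for $\var W$ in direction (i), and the packaging via Theorem~\ref{thm:dichotomy_bands_intro} — is routine given the lemmas cited in the excerpt.
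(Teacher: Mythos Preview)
Your overall plan---use the dichotomy Theorem~\ref{thm:dichotomy_bands_intro} and show (i) $\var W\subseteq\var Q$ and (ii) every strictly larger variety contains a finite band outside $\var Q$---matches the paper. For (i) the paper proceeds essentially as you sketch: it notes $\var W=[G_4\approx H_4]\cap[\dwrd G_4\approx\dwrd H_4]$ and proves (Lemma~\ref{lma:if_g4h4_then_lam}) that $G_4\approx H_4$ forces \lam{} via the substitution $(x_1,x_2,x_3,x_4)\mapsto(y,x,e,d)$ into $G_4\approx H_4$; duality then gives \lamd.

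For (ii) the paper is more economical than your proposal. Instead of manufacturing a fresh witness for each minimal variety above $\var W$, it uses one lattice observation: if $\var V\not\subseteq[G_4\approx H_4]$ then Figure~\ref{fig:varieties_of_bands} forces $[\dwrd G_3\approx\dwrd I_3]\subseteq\var V$, and the $9$-element band $S_9$ already built for Theorem~\ref{thm:genvar_diffcomp} generates exactly that variety (Lemma~\ref{lma:S9S10_var}) and fails \lam; the case $\var V\not\subseteq[\dwrd G_4\approx\dwrd H_4]$ is dual. Thus a single pre-existing band plus one reading of the lattice replaces your case split and the Rees-matrix constructions you anticipate. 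Your route would work in principle, but note that your list of ``minimal offenders'' is not quite right (the only covers of $\var W$ are $[\dwrd G_4G_4\approx\dwrd I_4H_4]$ and its dual; the others you list are not minimal), and in any event the paper's recycling of $S_9$ avoids all new band computations.
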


Bands that satisfy the identity $\dwrd G_3 G_3\approx \dwrd I_3 I_3$ are called \emph{regular}.
From Theorem~\ref{thm:greatest_var_smp_tract} and Figure \ref{fig:varieties_of_bands} we obtain the following result.
\begin{clr}
The \smp{} for every regular band is in \ptime{}.
\end{clr}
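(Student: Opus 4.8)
The plan is to read the corollary off directly from Theorem~\ref{thm:greatest_var_smp_tract} and the classification of varieties of bands in Figure~\ref{fig:varieties_of_bands}. First I would recall that, by definition, the regular bands form the variety $[\dwrd G_3G_3\approx\dwrd I_3I_3]$. Locating this variety in Figure~\ref{fig:varieties_of_bands} — it is the node on the central branch two levels below $[\dwrd G_4G_4\approx\dwrd H_4H_4]$ — one sees that it lies below $[\dwrd G_4G_4\approx\dwrd H_4H_4]$ in the lattice; concretely the two are joined by the chain
\[
[\dwrd G_3G_3\approx\dwrd I_3I_3]\;\subseteq\;[\dwrd G_4G_3\approx\dwrd H_4I_3]\;\subseteq\;[\dwrd G_4G_4\approx\dwrd H_4H_4].
\]
Hence every regular band satisfies the identity $\dwrd G_4G_4\approx\dwrd H_4H_4$.

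Given this inclusion the corollary is immediate. Since the subpower membership problem is only defined for finite algebras, ``the \smp{} for every regular band'' means the \smp{} for every \emph{finite} regular band, and every such band is a finite member of the variety $[\dwrd G_4G_4\approx\dwrd H_4H_4]$. By Theorem~\ref{thm:greatest_var_smp_tract} all finite members of this variety induce a tractable \smp{}, so in particular every finite regular band does.

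The one point that warrants care is the hypothesis $\ptime\ne\np$ in Theorem~\ref{thm:greatest_var_smp_tract}, which does not appear in the corollary. I would note that this assumption is used only for the maximality (``greatest'') half of that theorem, whereas the assertion that every finite member of $[\dwrd G_4G_4\approx\dwrd H_4H_4]$ has tractable \smp{} is unconditional: in the proof of Theorem~\ref{thm:greatest_var_smp_tract} one checks that every finite band in this variety satisfies \lam{} and \lamd{}, and then Theorem~\ref{thm:dichotomy_bands_intro} yields $\smp(S)\in\ptime$. Thus the corollary holds with no complexity-theoretic assumption. Beyond this bookkeeping there is essentially no obstacle; the only thing to get right is the placement of $[\dwrd G_3G_3\approx\dwrd I_3I_3]$ in Figure~\ref{fig:varieties_of_bands}, i.e. confirming it sits strictly below the node $[\dwrd G_4G_4\approx\dwrd H_4H_4]$ and not on a skew branch.
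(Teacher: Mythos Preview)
Your proposal is correct and matches the paper's own approach: the paper simply states that the corollary follows from Theorem~\ref{thm:greatest_var_smp_tract} and Figure~\ref{fig:varieties_of_bands}, which is exactly the argument you spell out. Your additional remark that the tractability direction of Theorem~\ref{thm:greatest_var_smp_tract} does not depend on $\ptime\ne\np$ is a helpful clarification the paper leaves implicit.
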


\section{Varieties of bands}

The following lemma states the well-known fact that every band is a \emph{semilattice of rectangular bands}. A band is called \emph{rectangular} if it satisfies $xyz \approx xz$.
\begin{lma}[cf. {\cite[Theorem 4.4.1]{Howie1995}}] % p.~115, 
\label{lma:band_j_cong}
Let $S$ be a band.
\begin{enumerate}
\item\label{it1_lma:band_j_cong}
$\gj$ is a congruence on $S$.
\item\label{it2_lma:band_j_cong}
$S/{\gj}$ is a semilattice, and for all $x,y \in S$ we have $xy \gj x$ if and only if $x \gjle y$.
\item\label{it3_lma:band_j_cong}
Each $\gj$-class is a rectangular band.
\end{enumerate}
\end{lma}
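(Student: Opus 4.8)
The plan is to recognise $S/{\gj}$ as the quotient of $S$ by its least semilattice congruence, and then to read off \lref{it2_lma:band_j_cong} and \lref{it3_lma:band_j_cong} from the behaviour of Green's relations on a band. First I would record the elementary facts that follow from every element of $S$ being idempotent: for $a,b\in S$ one has $a\grle b\iff ba=a$, $a\glle b\iff ab=a$, and hence $a\gjle b\iff a\in S^1bS^1$; in particular $\gh$ is the identity relation, since $a\gh b$ forces $ba=a$, $ab=b$ (from $a\gr b$) and $ab=a$ (from $a\gl b$), whence $a=ab=b$. I would also note the identity $ab=a(ba)b$, valid for all $a,b$ because it is just $(ab)^2=ab$.

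The first real step is the characterisation
\[ a\gj b \iff \bigl(aba=a \ \text{and}\ bab=b\bigr). \]
Here $\Leftarrow$ is immediate, since $aba=a$ puts $a$ in $SbS$, so $a\gjle b$, and symmetrically. For $\Rightarrow$, write $a=xby$ with $x,y\in S^1$: if $x=1$ then $ba=b(by)=by=a$, so $aba=a^2=a$; the case $y=1$ is dual; and if $x,y\in S$ then $aba=(xby)b(xby)=xbybxby$, which equals $xby=a$ because $xbybxby\approx xby$ is an identity of all bands. That identity is checked in one line in the free band on three generators via the Green--Rees normal form: both words have content $\{x,b,y\}$, longest proper‑content prefix $xb$, and longest proper‑content suffix $by$. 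Exchanging $a$ and $b$ gives $bab=b$. Denote the relation just characterised by $\theta$, so that $\theta=\gj$.

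For \lref{it1_lma:band_j_cong} I would show $\theta$ is a congruence with $S/{\theta}$ a semilattice. It is reflexive and symmetric by inspection; transitivity and the compatibility conditions ($(a,b)\in\theta$ implies $(ac,bc)\in\theta$ and $(ca,cb)\in\theta$) I would verify by direct computation with $x^2\approx x$. Since $(ab)(ba)(ab)=ab$ and, symmetrically, $(ba)(ab)(ba)=ba$ by idempotency, $(ab,ba)\in\theta$, so $S/{\theta}$ is a commutative band, i.e. a semilattice; this proves \lref{it1_lma:band_j_cong} and the first half of \lref{it2_lma:band_j_cong}. Moreover homomorphisms preserve $\gjle$ and a semilattice is $\gj$‑trivial, so $\gj$ lies inside every semilattice congruence of $S$; hence $\gj$ is the least one. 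The rest of \lref{it2_lma:band_j_cong} is then formal (write $[z]$ for the $\gj$‑class of $z$): $x\gjle y$ gives $[x]\gjle[y]$ in $S/{\gj}$, and conversely $[x]\gjle[y]$ means $x\gj syt$ for some $s,t\in S^1$, so $x\gjle y$; since $S/{\gj}$ is a semilattice, $[x]\gjle[y]$ is equivalent to $[x][y]=[x]$, that is, to $xy\gj x$.

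For \lref{it3_lma:band_j_cong}, fix a $\gj$‑class $J$. It is a subsemigroup, since $[ab]=[a][b]=[a]^2=[a]$ whenever $a\gj b$. From the characterisation, $a\gj b$ yields $a\gr ab$ (as $a=(ab)a\in abS$ and $ab\in aS$) and $ab\gl b$ (as $b=b(ab)\in S\,ab$ and $ab\in Sb$); in particular $\gj=\gd$, and more generally $\beta\gj a\Rightarrow a\gr a\beta$ and $\gamma\gj c\Rightarrow\gamma c\gl c$. Applying this inside $J$, for $a,b,c\in J$ we get $abc\gr a\gr ac$ and $abc\gl c\gl ac$, so $abc\gh ac$; since $\gh$ is trivial, $abc=ac$, i.e. $J$ is a rectangular band. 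The hard part will be the explicit computation, in the third paragraph, that $\theta$ is transitive and compatible with multiplication: this is a routine but somewhat delicate manipulation with the band identity $x^2\approx x$, and together with the normal‑form identity $xbybxby\approx xby$ it is the only non‑formal ingredient, everything else being bookkeeping with Green's relations.
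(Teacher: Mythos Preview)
The paper does not give its own proof of this lemma; it simply cites Howie's textbook, where the result is standard. Your outline is essentially the classical argument one finds there, and parts \lref{it2_lma:band_j_cong} and \lref{it3_lma:band_j_cong} are handled correctly and elegantly (the use of $\gh$-triviality to get $abc=ac$ inside a $\gj$-class is exactly the right move).

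Two small comments. First, once you have established $\theta=\gj$, transitivity is free: $\gj$ is by definition the equivalence associated to the preorder $\gjle$, so there is nothing to compute. Second, the genuine content you defer---compatibility of $\gj$ with multiplication---is not quite as ``routine'' as the phrase ``direct computation with $x^2\approx x$'' suggests; a bare attempt to reduce $(ac)(bc)(ac)$ to $ac$ using only $aba=a$, $bab=b$ and idempotency tends to go in circles. A clean way to organise it, using only facts you already have, is to first prove the meet property
\[
x\gjle p\ \text{and}\ x\gjle q\ \Longrightarrow\ x\gjle pq,
\]
which follows in two lines: from $xpx=x$ and $xqx=x$ one gets $x=xpxqx$, so $x\gjle pxq$; and $pxq=(px)q\gj q(px)\gjle qp\gj pq$ by your identity $uv\gj vu$. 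Compatibility is then immediate: if $a\gj b$ then $ac\gjle a\gj b$ and $ac\gjle c$, hence $ac\gjle bc$, and symmetrically $bc\gjle ac$. With this filled in, your proposal is a complete and correct proof of the lemma.
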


\begin{lma}[{\cite[Lemma~2.2]{GP1989}}]
\label{lma:band_rule0}
Let $x,y,z$ be elements of a band $S$ such that $x \gj z$. Then 
$x \gjle y$ if and only if $x y z = xz$.
\end{lma}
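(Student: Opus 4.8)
The plan is to reduce the statement to the structure theory of bands supplied by Lemma~\ref{lma:band_j_cong}: that $S/{\gj}$ is a semilattice and that every $\gj$-class is a rectangular band. Throughout, I would write $\overline{s}$ for the image of $s\in S$ in the semilattice $S/{\gj}$, and let $J$ denote the $\gj$-class of $x$; since $x\gj z$ we also have $z\in J$, and $J$ satisfies the rectangular-band law $abc\approx ac$ by Lemma~\ref{lma:band_j_cong}\ref{it3_lma:band_j_cong}. I would also record that, by Lemma~\ref{lma:band_j_cong}\ref{it2_lma:band_j_cong}, the hypothesis $x\gjle y$ is equivalent to $xy\gj x$, i.e.\ to $\overline{x}\,\overline{y}=\overline{x}$ in the semilattice.

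For the implication $x\gjle y\Rightarrow xyz=xz$, I would first observe that $\overline{xy}=\overline{x}\,\overline{y}=\overline{x}$, so that $xy$ lies in $J$ alongside $x$ and $z$. The key move is then to rewrite, using idempotency, $xyz = x^2yz = x\,(xy)\,z$; now all three factors $x$, $xy$, $z$ lie in the rectangular band $J$, so the law $abc=ac$ yields $x\,(xy)\,z = xz$, as required.

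For the converse $xyz=xz\Rightarrow x\gjle y$, I would simply push the equation through the quotient homomorphism $S\to S/{\gj}$ and compute in the semilattice, which is commutative and idempotent: using $\overline{z}=\overline{x}$ one gets $\overline{x}\,\overline{y}=\overline{x}\,\overline{y}\,\overline{x}=\overline{xyz}=\overline{xz}=\overline{x}$, whence $xy\gj x$ and so $x\gjle y$ by Lemma~\ref{lma:band_j_cong}\ref{it2_lma:band_j_cong}. This is the only point at which $x\gj z$ is used in this direction; without it one recovers merely that $\overline{xz}$ lies below $\overline{y}$ in $S/{\gj}$.

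I do not expect a genuine obstacle here: the lemma is elementary once Lemma~\ref{lma:band_j_cong} is available. The one step requiring a small idea is the rewriting $xyz=x(xy)z$ in the forward direction, which is what makes the rectangular-band law applicable even though $y$ itself may sit in a $\gj$-class strictly above that of $x$.
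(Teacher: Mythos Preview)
Your proof is correct and follows essentially the same approach as the paper's: the forward direction uses idempotence to write $xyz=x(xy)z$ and then the rectangular-band law on the $\gj$-class containing $x,xy,z$, while the converse passes to the semilattice $S/{\gj}$ to conclude $xy\gj x$. The only difference is presentational; the paper's argument is terser but identical in substance.
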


\begin{proof}
First assume $x\gjle y$.
Lemma~\ref{lma:band_j_cong}\tlref{it2_lma:band_j_cong} implies $x \gj xy$. We have $xyz = x(xy)z = xz$ by idempotence and Lemma~\ref{lma:band_j_cong}\tlref{it3_lma:band_j_cong}.

For the converse assume $xyz=xz$. 
Since $S/{\gj}$ is a semilattice, we have $x \gj xz = xyz$. Thus $xy \gj x$, and Lemma~\ref{lma:band_j_cong}\tlref{it2_lma:band_j_cong} yields $x \gjle y$.
\end{proof}

We will use the following well-known rules throughout this paper.

\begin{lma}
\label{lma:band_rule1}
Let $x,y$ be elements of a band $S$. Then 
\begin{enumerate}
	\item\label{it1_lma:band_rule1} $x \gjle y$\quad{}if and only if\quad$x y x = x$,
	\item\label{it2_lma:band_rule1} $x \glle y$\quad{}if and only if\quad$x y = x$,
	\item\label{it3_lma:band_rule1} $x \grle y$\quad{}if and only if\quad$y x = x$.
\end{enumerate}
\end{lma}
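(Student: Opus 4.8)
The plan is to derive all three equivalences from the structure theory already established, in particular Lemma~\ref{lma:band_j_cong} and Lemma~\ref{lma:band_rule0}, together with the standard characterizations of the Green preorders $\grle$ and $\glle$ via principal one-sided ideals.

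For part~\ref{it1_lma:band_rule1}, I would apply Lemma~\ref{lma:band_rule0} in the special case $z=x$: since trivially $x\gj x$, that lemma gives $x\gjle y$ if and only if $xyx=xx=x$, where the last equality uses idempotence of $S$. So~\ref{it1_lma:band_rule1} is essentially immediate.

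For part~\ref{it2_lma:band_rule1}: recall that $x\glle y$ means $S^1 x \subseteq S^1 y$, equivalently $x = sy$ for some $s\in S^1$. If $x\glle y$, write $x=sy$; then $xy = syy = sy = x$ by idempotence. Conversely, if $xy=x$ then $x\in S^1 y$, so $x\glle y$. Part~\ref{it3_lma:band_rule1} is the left-right dual: $x\grle y$ means $xS^1 \subseteq yS^1$; if $x=ys$ then $yx = yys = ys = x$, and conversely $yx=x$ gives $x\in yS^1$. (Alternatively one can invoke the dual band and Lemma~\ref{lma:dualsgp_dualqid}-style reasoning, but the direct argument is shorter.)

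I do not expect a genuine obstacle here; the statement is labelled "well-known" and every ingredient is either idempotence or the definitions of $\gl,\gr,\gj$ recalled from \cite{Howie1995}. The only point requiring a little care is the use of $S^1$ (the monoid obtained by adjoining an identity if necessary) in the one-sided ideal characterizations, and checking that the "$s$" appearing there can be absorbed by idempotence in both the $s\in S$ and $s=1$ cases; this is routine. If one wants to avoid $S^1$ entirely, one can instead note that $xy=x$ already exhibits $x$ as a left multiple of $y$ within the relevant ordering, which is exactly the content of $x\glle y$, and similarly for $\grle$.
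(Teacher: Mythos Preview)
Your proposal is correct and matches the paper's own proof essentially line for line: part~\ref{it1_lma:band_rule1} is obtained from Lemma~\ref{lma:band_rule0} with $z=x$, and parts~\ref{it2_lma:band_rule1} and~\ref{it3_lma:band_rule1} follow from the definition of $\glle$ (respectively $\grle$) via $S^1$ together with idempotence. Your extra remarks about the $s=1$ case and the dual-semigroup alternative are harmless elaborations of the same argument.
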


\begin{proof}
\lref{it1_lma:band_rule1} is immediate from Lemma~\ref{lma:band_rule0}.

\lref{it2_lma:band_rule1}
By the definition of $\glle$ and by idempotence we have
\[ x \glle y \quad\text{iff}\quad \exists u \in S^1 \colon uy=x \quad\text{iff}\quad xy=x. \]

\lref{it3_lma:band_rule1} is proved similarly to \lref{it2_lma:band_rule1}.
\end{proof}

From Lemma~\ref{lma:band_rule1} follows that 
$x\gjle y$ holds in $S$ if and only if it holds in 
some subsemigroup of $S$ that contains $x$ and $y$.
The same is true for $\glle$ and $\grle$.

We write $\rg{n}:=\{1,\ldots,n\}$ for $n\in\N$.
A tuple $a$ in a direct power $S^n$ is considered as a function $a\colon \rg{n} \rightarrow S$. Thus the $i$th coordinate of this tuple is denoted by $a(i)$ rather than $a_i$.
The subsemigroup generated by a set $A = \{ a_1,\ldots,a_k \}$ may be denoted by $\subuni{A}$ or $\subuni{a_1,\ldots,a_k}$.

\begin{lma}\label{lma:gjle_band_componentwise}
Let $S$ be a band and $x,y \in S^n$ for some $n\in\N$. Let $\le$ be one of the preorders $\gjle,\glle,\grle$.
Then 
\begin{equation*}
x \le y \quad\text{if and only if}\quad \forall i\in\rg{n} \colon x(i) \le y(i).
\end{equation*}
\end{lma}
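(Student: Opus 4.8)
The plan is to reduce the componentwise characterization for tuples to the one-coordinate rules established in Lemma~\ref{lma:band_rule1}, using the fact that the relevant products in $S^n$ are computed coordinatewise. I will treat the three preorders uniformly, since each of $\gjle,\glle,\grle$ admits an equational description: by Lemma~\ref{lma:band_rule1} we have $x\gjle y \iff xyx=x$, $x\glle y \iff xy=x$, and $x\grle y \iff yx=x$.

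First, recall that the remark following Lemma~\ref{lma:band_rule1} already gives the crucial locality statement: for elements $u,v$ of a band, each of $u\gjle v$, $u\glle v$, $u\grle v$ holds in $S$ if and only if it holds in any subsemigroup containing $u$ and $v$. In particular these relations are intrinsic to the two-generated subsemigroup $\subuni{u,v}$. Now fix $x,y\in S^n$. For the forward direction, suppose $x\le y$ in $S^n$, where $\le$ is one of the three preorders. Using the equational form from Lemma~\ref{lma:band_rule1} — say $\le$ is $\gjle$, so $xyx=x$ in $S^n$ — and evaluating both sides at coordinate $i$, the multiplication in $S^n$ being coordinatewise gives $x(i)y(i)x(i)=x(i)$ in $S$, hence $x(i)\gjle y(i)$ by Lemma~\ref{lma:band_rule1}\lref{it1_lma:band_rule1}. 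The cases $\glle$ and $\grle$ are identical, using parts \lref{it2_lma:band_rule1} and \lref{it3_lma:band_rule1}. For the converse, suppose $x(i)\le y(i)$ for every $i\in\rg n$. Again translating via Lemma~\ref{lma:band_rule1} into the corresponding equation in each coordinate — e.g. $x(i)y(i)x(i)=x(i)$ for all $i$ in the $\gjle$ case — and reassembling coordinatewise yields $xyx=x$ in $S^n$, whence $x\gjle y$ by Lemma~\ref{lma:band_rule1}\lref{it1_lma:band_rule1}; likewise for the other two preorders.

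I expect no real obstacle here: the entire argument is the observation that the defining equations of Lemma~\ref{lma:band_rule1} are preserved and reflected by passing between $S^n$ and its coordinates, since $S^n$ is a direct power and hence $S^n\models xyx\approx x$ evaluated at a tuple is the conjunction of the corresponding coordinatewise equalities. The only point worth stating carefully is that $S^n$ is itself a band (being a subalgebra of a power of a band, it satisfies $x\approx x^2$), so Lemma~\ref{lma:band_rule1} applies to $S^n$ as well as to $S$; this is what licenses using the equational characterization on both sides of the biconditional. Everything else is bookkeeping, and I would present the proof in essentially the three lines sketched above, handling the three preorders in parallel.
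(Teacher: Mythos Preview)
Your proposal is correct and follows essentially the same route as the paper: both arguments use the equational characterizations of $\gjle,\glle,\grle$ from Lemma~\ref{lma:band_rule1} and the fact that an equation holds in $S^n$ iff it holds in every coordinate. The paper writes out only the $\gjle$ case explicitly and declares the others similar, exactly as you do.
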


\begin{proof}
We prove the statement for $\gjle$.
By Lemma~\ref{lma:band_rule1}\tlref{it1_lma:band_rule1} the following are equivalent:
\begin{align*}\begin{aligned} 
&x \gjle y, \\
&xyx=x, \\
&xyx(i)=x(i) &&\text{for all } i\in\rg{n}, \\
&x(i) \gjle y(i) &&\text{for all } i\in\rg{n}.
\end{aligned}\end{align*}
For  $\glle$ and  $\grle$ the equivalence is proved similarly.
\end{proof}

In the remainder of this section, we use some well-established results on the lattice of varieties of bands. For a full characterization the reader is referred to~\cite{GP1989}. The following notation was introduced there.

\begin{dfn}[cf. {\cite[Notation 2.1]{GP1989}}] % p. 325
\mbox{} \\
\def\arraystretch{1.25}
\arraycolsep=0pt
\begin{tabular}{@{}p{\widthof{$F(X)$\quad}}@{}p{\textwidth-\widthof{\quad$F(X)$\quad}}}
$\dsgp{S}$ & the \emph{dual semigroup} of $( S, \cdot )$ is the semigroup $( S,* )$ with $x*y := y \cdot x$. \\
$S^1$ & the semigroup obtained when an identity is adjoined to $S$. \\
$X$ & the countably infinite set of variables $\{x_1,x_2,\ldots\}$. \\
$F(X)$ & the free semigroup over $X$. \\
$\varnothing$ & the empty word, i.e.\ the identity of $F(X)^1$. \\
$\dwrd{w}$ & 
the \emph{dual} of a word $w \in F(X)^1$, i.e.\ the word obtained when reversing the order of the variables of $w$. \\
$c(w)$ & the \emph{content} of a word $w \in F(X)^1$, i.e.\ the set of variables occurring in $w$. \\
$s(w)$ &
the longest left cut of the word $w$ that contains all but one of the variables of $w$:
For $w \neq \varnothing$ let $u,v \in F(X)^1$ and $x \in X$ such that $w = uxv$ and $c(u)\ne c(ux) = c(w)$. 
Then define $s(w) := u$. Let $s(\varnothing) := \varnothing$. \\
$\sigma(w)$ &
for $w \neq \varnothing$, the last variable in $w$ under the order of the first occurrence, starting from the left.
We define $\sigma( \varnothing ) := \varnothing$. \\
$\dwrd{f}$ &
for a (partial) function $f \colon F(X)^1 \to F(X)^1$, we define $\dwrd{f}(w) := \dexp{ f (\dwrd{w}) }$.
\end{tabular}
\end{dfn}

\begin{dfn}[cf. {\cite[Notation 3.1]{GP1989}}]\label{dfn:varbands_hn} % p.337
For $n \geq 2$ we define $h_n \colon F(X)^1 \rightarrow F(X)^1$,
\begin{align*}
h_n(\varnothing) &:= \varnothing  \quad\text{for } n \geq 2,\\
h_2(w) &:=\text{the first variable of $w$ if $w\ne\varnothing$,} \\
h_n(w) &:= h_n s(w)\sigma(w)\dwrd h_{n-1}(w)  \quad\text{for } n \geq 3,\ w\ne\varnothing.
\end{align*}
\end{dfn}

\noindent
From~\cite{GP1989} we obtain the following upper bound on the length of $h_n(w)$ for $n\ge2$. This will allow us to prove that the \smp{} for every band is in \np.

\begin{lma}\label{lma:smp_band_in_np}
For every integer $n\ge 2$ there is a polynomial $p_n$ such that for all $k \in \mathbb N$ and all $k$-ary terms $t$ the length of $h_n(t)$ is at most $p_n(k)$.
\end{lma}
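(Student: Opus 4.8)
The plan is to control the length of $h_n(w)$ by induction on $n$, using the recursive definition $h_n(w) = h_n s(w)\,\sigma(w)\,\dwrd h_{n-1}(w)$ together with the key observation that $s(w)$ is a proper left cut of $w$ whose content misses exactly one variable. First I would record the trivial base case $n=2$: since $h_2(w)$ is a single variable (or the empty word), we have $|h_2(t)| \le 1$, so $p_2$ can be taken constant. Then I would set up the induction, assuming that for some $n\ge 3$ there is a polynomial $p_{n-1}$ with $|h_{n-1}(w)| \le p_{n-1}(|c(w)|)$ for every word $w$ — note it is cleaner to phrase the bound in terms of the number of distinct variables $|c(w)|$ rather than the arity $k$ of the term, since a $k$-ary term $t$ has $|c(t)| \le k$ and $h_n$ only depends on the variables actually occurring; the final statement then follows by monotonicity of $p_n$.

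The substance of the induction is to bound $|h_n(w)|$. From the recursion, $|h_n(w)| = |h_n s(w)| + 1 + |\dwrd h_{n-1}(w)| = |h_n s(w)| + 1 + |h_{n-1}(\dwrd w)|$. The last term is at most $p_{n-1}(|c(w)|)$ by the induction hypothesis (since $c(\dwrd w) = c(w)$). For the recursive term $|h_n s(w)|$, the crucial point is that $c(s(w))$ has exactly one fewer variable than $c(w)$, so unwinding the recursion for $h_n$ terminates after at most $|c(w)|$ steps, each step contributing a "$+1$" for the $\sigma$ variable plus a $\dwrd h_{n-1}$ block applied to successively shorter prefixes. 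Writing $w_0 = w$, $w_{j+1} = s(w_j)$, we get a chain with $|c(w_{j+1})| = |c(w_j)| - 1$ (for $w_j \ne \varnothing$), hence of length at most $m := |c(w)|$, and
\[
|h_n(w)| \;\le\; \sum_{j=0}^{m-1}\bigl(1 + |h_{n-1}(\dwrd w_j)|\bigr) \;\le\; \sum_{j=0}^{m-1}\bigl(1 + p_{n-1}(|c(w_j)|)\bigr) \;\le\; m\bigl(1 + p_{n-1}(m)\bigr).
\]
Thus $p_n(k) := k\,(1 + p_{n-1}(k))$ works; by induction each $p_n$ is a genuine polynomial (a product of $n-1$ linear-ish factors in $k$), and one may take it monotone nondecreasing.

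The one place that requires care — and the main obstacle — is justifying the claim that the $s$-recursion for $h_n$ strictly decreases the content, i.e.\ that $c(s(w)) = c(w) \setminus \{\sigma'\}$ for some variable $\sigma'$, so that after $|c(w)|$ applications of $s$ one reaches $\varnothing$ and the recursion bottoms out. This is immediate from the definition of $s(w)$: writing $w = uxv$ with $c(u) \ne c(ux) = c(w)$, we have $s(w) = u$ and $c(u) \subsetneq c(w)$ with $|c(w) \setminus c(u)| = 1$ (the variable $x$ is the unique one not in $c(u)$, since $c(ux) = c(w)$). I would also remark that $h_n$ depends on $w$ only through something controlled by $c(w)$ in the sense needed — more precisely, the bound is stated purely in terms of $|c(w)|$, so no further structural analysis of the words from \cite{GP1989} is needed beyond Definitions~\ref{dfn:varbands_hn} and the surrounding notation. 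Everything else is routine bookkeeping with the explicit recursion.
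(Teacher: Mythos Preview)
Your proposal is correct and follows essentially the same approach as the paper: unwind the $s$-recursion in the definition of $h_n$, use that $|c(s(w))| = |c(w)| - 1$ so the chain terminates after $|c(w)| \le k$ steps, bound each block by $1 + p_{n-1}(k)$ via the induction hypothesis, and arrive at the same recursion $p_n(k) = k(1 + p_{n-1}(k))$. The only cosmetic difference is that you phrase the induction hypothesis in terms of $|c(w)|$ rather than the nominal arity $k$, which is arguably cleaner but not substantively different.
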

\begin{proof}
We use induction on $n$. For the base case note that the length of $h_2(t)$ is $1$ for all terms $t$. 
Now assume the assertion is true for some $n\ge2$.
Let \[ p_{n+1}(k) := k(1+p_n(k))\quad\text{for } k\in\mathbb{N}. \]
Let $k\in\mathbb{N}$, and $t$ be a term over $x_1,\ldots,x_k$. Let $\ell\le k$ be the number of variables actually occurring in $t$. 
By repeated application of the recursion in Definition~\ref{dfn:varbands_hn} we obtain
\begin{align*}
\begin{aligned} 
h_{n+1}(t) &= h_{n+1} s(t)\cdot \sigma(t)\dwrd h_n(t) \\
&= h_{n+1} s^2(t)\cdot \sigma s(t)\dwrd h_ns(t)\cdot \sigma(t) \dwrd h_n(t) \quad \text{} \\
&\mathrel{\makebox[\widthof{=}]{\vdots}} \\
&= h_{n+1} s^\ell(t)\cdot \sigma s^{\ell-1}(t) \dwrd h_ns^{\ell-1}(t)\,\cdots\,\sigma s^1(t) \dwrd h_ns^1(t)\cdot \sigma(t) \dwrd h_n(t) \\
&= \prod_{i=\ell-1}^0 \sigma s^{i}(t) \dwrd h_ns^{i}(t)\qquad\text{since $s^\ell(t) = \varnothing$.} \\
\end{aligned}
\end{align*}
The length of $\sigma s^{i}(t)$ is 1 for all $i$ by the definition of $\sigma$. Thus the length of each factor of the product is at most $1+p_n(k)$. Therefore the length of $h_{n+1}(t)$ is bounded by $\ell (1+p_n(k))$, which is at most $p_{n+1}(k)$.
\end{proof}

\begin{lma}\label{lma:smp_band_in_np_2}
Let $S$ be a finite band.
Then there is a polynomial $p$ such that every $k$-ary term function on $S$ is induced by a term of length at most $p(k)$.
\end{lma}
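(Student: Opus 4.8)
\textbf{Proof plan for Lemma~\ref{lma:smp_band_in_np_2}.}
The plan is to reduce the general statement to the identities that define the variety generated by the finite band $S$, and then to control word lengths through the $h_n$ functions analyzed in Lemma~\ref{lma:smp_band_in_np}. The key structural input is the classification of varieties of bands in Figure~\ref{fig:varieties_of_bands} together with the results of~\cite{GP1989}: every proper variety of bands lies below one of the varieties $[\dwrd G_n G_n \approx \dwrd H_n H_n]$ (or is contained in $[G_n\approx H_n]$, etc.), and the content of the defining identities is expressed via the operators $s$, $\sigma$, $h_n$. Concretely, $S$ generates a variety $\var V$, and since $S$ is finite, $\var V$ is a \emph{proper} variety of bands unless $S$ is the trivial band; in either case $\var V$ is determined by an identity built from $G_n, H_n, I_n$ for some fixed $n$ depending only on $S$.

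First I would fix such an $n$ and recall from~\cite{GP1989} the ``normal form'' statement: in any band lying in $\var V$, two words $u,v$ induce the same term function if and only if they agree under the appropriate canonical-form map associated to $\var V$, and that canonical form is obtained by iterated application of $s$, $\sigma$, and $h_m$ for $m\le n$. The upshot is that for every $k$-ary term $t$ there is a term $t'$ with $t^S = (t')^S$ such that $t'$ is (built from) $h_n$-type expressions in the variables actually occurring in $t$. Then I would invoke Lemma~\ref{lma:smp_band_in_np}, which gives a polynomial $p_n$ bounding the length of $h_n(t)$ by $p_n(k)$ for every $k$-ary term $t$; assembling the (boundedly many) such blocks needed to form the canonical form of $t$ yields a polynomial bound $p(k)$ on the length of $t'$. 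Setting $p$ to be this polynomial completes the argument.

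The main obstacle I anticipate is \emph{bookkeeping the canonical form precisely}: the normal-form description in~\cite{GP1989} is phrased in terms of a specific identity $v\approx w$ defining $\var V$ and involves not just $h_n$ but also its dual $\dwrd h_n$ and concatenations thereof (as visible in the structure of $\dwrd H_n H_n$, etc.). One must verify that the canonical form of an arbitrary $k$-ary term is a \emph{bounded-length product of $h_m$- and $\dwrd h_m$-blocks} ($m\le n$), each block applied to (a factor of) $t$, and then apply the length bound of Lemma~\ref{lma:smp_band_in_np} together with its evident dual. Care is also needed at the edge cases: the trivial variety $[x\approx y]$, the variety of all bands (where $S$ would have to be a band generating it, which a \emph{finite} band cannot do, since that variety is not generated by any finite band), and the left/right-zero and semilattice cases near the bottom of Figure~\ref{fig:varieties_of_bands}, where the canonical form is especially short and the bound is immediate. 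Once the reduction to a bounded product of $h_m$-blocks is in place, the polynomial bound follows mechanically from Lemma~\ref{lma:smp_band_in_np}.
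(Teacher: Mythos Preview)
Your plan is correct in spirit and would ultimately go through, but it takes a more roundabout route than the paper does, and the ``obstacle'' you flag is one you can sidestep entirely. You propose to pin down the \emph{exact} variety $\var V$ generated by $S$ and then bound the length of its canonical form, which, as you note, may be a bounded product of $h_m$- and $\dwrd h_m$-blocks and requires some care. The paper avoids this: since the variety of all bands is not finitely generated, the finite band $S$ lies in some $[G_n\approx H_n]$ for a fixed $n\ge 3$ depending only on $S$ (it is irrelevant whether this is the variety $S$ generates; any larger proper variety of this shape will do). For that variety one has directly from \cite[Theorem~4.5]{GP1988} the identity $t\approx h_n(t)$, so the single word $h_n(t)$ is already a representative of the term function induced by $t$, and Lemma~\ref{lma:smp_band_in_np} bounds its length by $p_n(k)$. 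Thus the paper takes $p:=p_n$ with no assembly of blocks and no dual $\dwrd h_n$ needed. The moral: working in a \emph{larger} one-sided variety $[G_n\approx H_n]$ gives a simpler normal form than the potentially two-sided one attached to $\var V$ itself, at no cost for the length bound.
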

\begin{proof}
It is well-known that the variety of bands is not finitely generated.
Thus $S$ belongs to a variety $[G_n \approx H_n]$ for some $n \geq 3$ by \cite{GP1989}. 
Let $p := p_n$ be the polynomial from Lemma~\ref{lma:smp_band_in_np}.
Now let $f$ be a $k$-ary term function on $S$ induced by some term $t$.
By \cite[Theorem 4.5]{GP1988} $S$ satisfies $t \approx h_n(t)$.
Thus $h_n(t)$ also induces $f$.
By Lemma~\ref{lma:smp_band_in_np} the length of $h_n(t)$ is at most $p_n(k)$. 
\end{proof}

\begin{thm}\label{thm:smp_band_in_np}
The \smp{} for a finite band $S$ is in \np.
\end{thm}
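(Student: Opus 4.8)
The plan is to show membership of $\smp(S)$ in $\np$ by exhibiting a short certificate for a positive instance, namely a term tree witnessing that $b$ is a product of the generators $a_1,\ldots,a_k$. The obvious certificate — a straight-line program or parenthesized word over $a_1,\ldots,a_k$ — could a priori be exponentially long, so the point is to use the structural results on bands just established to cut it down to polynomial size. First I would observe that if $b\in\subuni{a_1,\ldots,a_k}$ then $b=t^{S^n}(a_1,\ldots,a_k)$ for some $k$-ary term $t$ (a word over $x_1,\ldots,x_k$, since bands have only the binary multiplication and products of a band are associative, so no parentheses are needed). So a certificate for a yes-instance is such a word $t$; the verifier evaluates $t$ at $(a_1,\ldots,a_k)$ in $S^n$ coordinatewise and checks the result equals $b$, which takes time polynomial in the length of $t$ and in $n$.

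The remaining issue is that $t$ itself might be long. Here I would invoke Lemma~\ref{lma:smp_band_in_np_2}: since $S$ is a fixed finite band, there is a polynomial $p$ (depending only on $S$) such that every $k$-ary term function on $S$ is induced by some term of length at most $p(k)$. Applying this to the coordinate projections, the term function $t^S$ on $S$ induced by $t$ is also induced by some word $t'$ with $|t'|\le p(k)$. Since $t$ and $t'$ induce the same term function on $S$, they induce the same term function on every power $S^n$ (term functions on $S^n$ are computed coordinatewise from the term function on $S$), so $t'^{S^n}(a_1,\ldots,a_k)=t^{S^n}(a_1,\ldots,a_k)=b$. Thus $t'$ is a certificate of length polynomial in $k$, hence polynomial in the input size $(k+1)n$, and the verifier runs in polynomial time. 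This proves $\smp(S)\in\np$.

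I do not expect a serious obstacle here, since the heavy lifting — the polynomial length bound on $h_n(t)$ from \cite{GP1989} and the identity $t\approx h_n(t)$ from \cite{GP1988} — has already been packaged into Lemma~\ref{lma:smp_band_in_np} and Lemma~\ref{lma:smp_band_in_np_2}. The only point requiring a line of care is the passage from ``$t$ and $t'$ induce the same term function on $S$'' to ``they induce it on $S^n$'': this is the standard fact that for any word identity, a direct power of $S$ satisfies it iff $S$ does, equivalently that evaluation in $S^n$ is the coordinatewise product of evaluations in $S$. One should also note explicitly that associativity in the band lets us treat the generated subsemigroup element as a plain word rather than a fully parenthesized term, so that Lemma~\ref{lma:smp_band_in_np_2} applies directly. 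With these remarks the proof is a short assembly of the preceding lemmas.
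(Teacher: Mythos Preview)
Your proposal is correct and follows essentially the same argument as the paper: invoke Lemma~\ref{lma:smp_band_in_np_2} to replace an arbitrary witnessing term by one of length at most $p(k)$, and use that short term as the \np{} certificate. The paper's proof is terser, leaving implicit the passage from ``same term function on $S$'' to ``same term function on $S^n$'' that you spell out, but the content is identical.
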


\begin{proof}
Fix an instance $\{a_1,\ldots, a_k\} \subseteq S^n,\, b \in S^n$ of $\smp(S)$. If $b \in \subuni{a_1,\ldots,a_k}$, then there is a term function $f$ such that $b = f( a_1,\ldots, a_k )$.
By Lemma~\ref{lma:smp_band_in_np_2} $f$ is induced by a term $t$ whose length is at most $p(k)$.
We can verify $b = f(a_1,\ldots, a_k )$ in $\mathcal{O}(np(k))$ time.
Thus $t$ is a witness for $b \in \subuni{a_1,\ldots,a_k}$.
\end{proof}

\section{Quasiidentities}
\label{sec:quid}

Recall the quasiidentities \hyperlink{ht:lam}{\lam} and \hyperlink{ht:lam}{\lamd} from Section \ref{sec:1}. For every finite band $S$ we introduce two intermediate problems, $\cpinfix(S)$ and $\cpsuffix(S)$. If $S$ satisfies \lam, these problems can be solved in polynomial time.
If $S$ also satisfies the dual quasiidentity \lamd, then $\smp(S)$ is in \ptime.
In Section~\ref{sec:np_completeness_qvs} we will show that the \smp{} for the remaining finite bands is \np-complete.

For finite bands $S$, we define $\cpinfix(S)$ as follows.
\cproblem{\cpinfix(\textit{S})}{
$c,d,e \in S^n$ such that $c \gjc d$ and $d\gjlec e$, \\
$A \subseteq S^n$ such that $\forall a \in A \colon e \gjlec a$.}
{Some $y \in \subuni A$ such that $dye = c$ if it exists; \false{} otherwise.}
We call an output $y \in \subuni A$ a \emph{solution} of $\cpinfix(S)$.
The next result allows us to combine and reduce solutions of $\cpinfix(S)$ if $S$ satisfies the quasiidentity \lam.
\begin{lma}\label{lma:cp_infix_1}
Let $S$ be a finite band that satisfies \lam{}.
Let $c,d,e\in S^n,\,A\subseteq S^n$ be an instance of $\cpinfix(S)$ and $x,y,z \in\subuni{A}$.
\begin{enumerate}
\item\label{it1_lma:cp_infix}
If $xy$ and $z$ are solutions of $\cpinfix(S)$ and there exists $h \in S^n$ with $hx = x$ and $hz = z$, then $xz$ is a solution of $\cpinfix(S)$.
\item\label{it2_lma:cp_infix}
If $xyz$ is a solution of $\cpinfix(S)$ with $x \gjlec y$ and $x \gjlec z$, 
then $xz$ is also a solution of $\cpinfix(S)$.
\end{enumerate}
\end{lma}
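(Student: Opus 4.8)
\textbf{Proof plan for Lemma~\ref{lma:cp_infix_1}.}
The plan is to prove both parts by direct computation inside $S^n$, using the quasiidentity \lam{} applied to suitable substitutions, together with the observation (following Lemma~\ref{lma:band_rule1}) that in a band the $\gjle$-conditions on the left of \lam{} translate into the idempotent conditions $\qd\qe\qd\approx\qd$, $\qe\qx\qe\approx\qe$, $\qe\qy\qe\approx\qe$, and that all the preorder statements in sight can be checked coordinatewise by Lemma~\ref{lma:gjle_band_componentwise}. Throughout, the hypothesis of the problem gives $c\gj d$, $d\gjle e$, and $e\gjle a$ for every generator $a$, hence $e\gjle w$ for every $w\in\subuni A$; in particular $e\gjle x,y,z$ whenever these lie in $\subuni A$. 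I would first record this once and for all.

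For part~\ref{it1_lma:cp_infix}: I would set up to apply \lam{} with the roles $\qd\mapsto d$, $\qe\mapsto e$, $\qh\mapsto h$, and with the ``$\qx,\qy$'' slots filled by $x$ and $z$ (or by $xy$ and $z$ — I would check which substitution makes the hypotheses of \lam{} hold). The three equational premises of \lam{} to verify are: (i) $d\,x\,z\,e = d e$, which should follow from the assumptions that $x y$ and $z$ are solutions, i.e. $d(xy)e = c$ and $dze = c$, combined with idempotence and the rectangular-band identity inside the relevant $\gj$-class — here the identity $dye\approx de$ type manipulations from Lemma~\ref{lma:band_rule0} plus $c\gj d$ do the work; (ii) $h x = x$, which is given; (iii) $h e = e$, which needs an argument — presumably $hz=z$ together with $e\gjle z$ forces $he=e$ via Lemma~\ref{lma:band_rule1} and a coordinatewise check, or it follows from $hx=x$ and $d\gjle e$; I would sort out which. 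Once \lam{} applies it yields $d\,x\,z\,e = de$; then I multiply/cut using $c\gj d$ and $d\gjle e$ to rewrite $de$ back to $c$, concluding $d(xz)e = c$, and since $xz\in\subuni A$ this is the desired solution.

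For part~\ref{it2_lma:cp_infix}: here $xyz$ is a solution, so $d(xyz)e = c$, and we are told $x\gjle y$ and $x\gjle z$. The idea is again to invoke \lam{}, this time with $\qh\mapsto x$: the premises $\qh\qx\approx\qx$ becomes $x y\approx$? — no; rather I expect to use \lam{} with $\qd\mapsto dx$ or with $x$ playing the ``$h$'' role after first isolating the suffix $yz$. Concretely, from $x\gjle y$ and $x\gjle z$ we get (Lemma~\ref{lma:band_rule1}\ref{it1_lma:band_rule1} applied coordinatewise) $xyx=x$ and $xzx=x$, and one can massage $d(xyz)e$: using $e\gjle x,z$ and rectangularity within $\gj$-classes, $yz$ collapses appropriately so that $d(xyz)e = d(xz)e$ directly, \emph{or} one applies \lam{} with $h=x$, $\qx\mapsto y$, to get $xye\cdot(\text{stuff}) $ reducing to drop $y$. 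I would work out the cleanest route; most likely the cut is: $d(xyz)e=c$ and $c\gj d\gjle e$, so $c\gj d(xz)e$ as well by the semilattice structure, and then \lam{} (premises: $d(xz)e$-type identity from $xyz$ being a solution, $x\cdot x\approx x$, $x e\approx $? needs $e\gjle x$ giving $xex=x$, not $xe=x$) — so the correct \lam{}-substitution probably has $\qh\mapsto xex$ or uses the reformulated identity premises rather than the $\gjle$ form. I will pin this down by expanding the premises of \lam{} in their equivalent identity form $\qd\qe\qd\approx\qd\ \&\ \qe\qx\qe\approx\qe\ \&\ \qe\qy\qe\approx\qe$, which is exactly what the text after \lamd{} tells us is legitimate.

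The main obstacle I anticipate is part~\ref{it1_lma:cp_infix}: verifying premise~(iii), $he=e$, and premise~(i), $dxze=de$, from the available hypotheses. Premise (i) in particular requires combining ``$xy$ is a solution'' and ``$z$ is a solution'' — two facts about products with $y$ and with $z$ respectively — into a single fact about the product $xz$, and the only lever for that is \lam{} itself, so there is a mild bootstrapping flavour: one must find an \emph{auxiliary} application of \lam{} (or of Lemma~\ref{lma:band_rule0} inside the $\gj$-class of $c$) that licenses dropping $y$ before the main application. I expect the resolution is that $dxye=c=dze$ together with $c\gj d$, $d\gjle e$, and $e\gjle x,y,z$ puts all of $d, dx, dxy, dxz, c$ into one $\gj$-class, where Lemma~\ref{lma:band_j_cong}\ref{it3_lma:band_j_cong} (rectangularity) gives $dxze = dxyze$ and $dxyze=dxye\cdot$(trailing idempotent)$=c$; then the hypothesis $hx=x$, $hz=z$ is what guarantees the result $xz$ actually has the right ``prefix compatibility'' to be re-expressible, and \lam{} is applied to clean up. Everything else is routine band arithmetic and coordinatewise preorder checks via Lemmas~\ref{lma:band_rule1} and~\ref{lma:gjle_band_componentwise}.
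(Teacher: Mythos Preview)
Your overall strategy---instantiate \lam{} with a carefully chosen substitution---is right, but you have not found the substitutions that make the premises verifiable, and the specific patches you propose for the gaps do not work.

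For part~\ref{it1_lma:cp_infix} you are stuck on the premise $he=e$. Your guesses (that $hz=z$ together with $e\gjle z$ forces $he=e$, or that $hx=x$ and $d\gjle e$ do) are both false in a general band. The paper resolves this by changing the target of the $e$-variable: instantiate \lam{} with $e\mapsto ze$ rather than $e\mapsto e$ (and $y\mapsto ye$). The troublesome premise then becomes $h(ze)=(hz)e=ze$, immediate from $hz=z$. Simultaneously the first premise becomes $dx(ye)(ze)=d(ze)$, which \emph{is} verifiable: since $e\gjle z$ gives $eze=e$, we get $dxyeze=dxye=c=dze$. The order premises hold because $ze\gj e$. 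The conclusion $dx(ze)=d(ze)=c$ is exactly what is wanted.

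For part~\ref{it2_lma:cp_infix} the issue is analogous: no $h$ is supplied, so one must be manufactured, and your candidate $h=x$ fails for the reason you already noticed ($xe\ne e$ in general). The paper takes $h\mapsto z$, $d\mapsto dxy$, $x\mapsto zx$, $y\mapsto y$, $e\mapsto ze$. The premises $z(zx)=zx$ and $z(ze)=ze$ are free by idempotence; the premise $(dxy)(zx)y(ze)=(dxy)(ze)$ holds because $x\gjle y,z$ gives $xyzx=x$, so both sides equal $dxyze=c$. The conclusion of \lam{} is $(dxy)(zx)(ze)=(dxy)(ze)$; using $xyzx=x$ once more the left side collapses to $dxze$, while the right side is $c$.

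Finally, your fallback idea for part~\ref{it1_lma:cp_infix}---that rectangularity in the $\gj$-class of $d$ yields $dxze=dxyze$---goes the wrong way around. One easily obtains $dxyze=c$ from $eze=e$, but dropping the middle $y$ is precisely the step that requires \lam{}; Lemma~\ref{lma:band_rule0} does not apply because $dx$ and $ze$ lie in different $\gj$-classes.
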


\begin{proof}
\lref{it1_lma:cp_infix}
Let $x,y,z,h$ be as as above. We claim that
\begin{equation}\label{eq1_lma:cp_infix}
\arraycolsep=0pt
\begin{array}{c}\begin{array}{rl}
dx(ye)(ze)&{}=d(ze) \\
hx&{}=x \\
h(ze)&{}=(ze) 
\end{array} \\
d \gjlec (ze) \gjlec x,(ye).
\end{array}
\end{equation}
For proving \eqref{eq1_lma:cp_infix} note that the condition on $A$ and Lemma~\ref{lma:band_j_cong} imply $e \gjlec g$ for all $g \in \subuni A$. 
So $e \gjlec z$, and thus $eze = e$ by Lemma~\ref{lma:band_rule1}\tlref{it1_lma:band_rule1}.
Hence $dxyeze=dxye=c=dze$, proving the first statement. 
The next two statements are clear from the assumptions.
Lemma~\ref{lma:band_j_cong} and $y,z \in \subuni A$ imply $e \gjc ye \gjc ze$. 
This and $d \gjlec e \gjlec x$ prove the last statement.
By \lam{} the identities \eqref{eq1_lma:cp_infix} imply $dx(ze)=d(ze)$. Thus $dxze=c$. Item~\lref{it1_lma:cp_infix} is proved.

\lref{it2_lma:cp_infix}
Let $x,y,z$ be as above. We prove that
\begin{equation}\label{eq2_lma:cp_infix}
\arraycolsep=0pt
\begin{array}{c}\begin{array}{rl}
(dxy)(zx)y(ze)&{}=(dxy)(ze) \\
z(zx)&{}=(zx) \\
z(ze)&{}=(ze)
\end{array} \\
d \gjlec (ze) \gjlec (zx),y.
\end{array}
\end{equation}
The first statement holds since $x(yz)x=x$ by Lemma~\ref{lma:band_rule1}\tlref{it1_lma:band_rule1}.
Statements two and three are clear.
Lemma~\ref{lma:band_j_cong} implies $e \gjc ze$ and $x \gjc zx$.
This and $d \gjlec e \gjlec x,y$ prove the last statement.
Now \eqref{eq2_lma:cp_infix} implies $(dxy)(zx)(ze)=(dxy)(ze)$ by \lam.
As $xyzx=x$, we obtain $dxze=dxyze$.
This proves item~\lref{it2_lma:cp_infix}.
\end{proof}

\begin{thm}\label{thm:cp_infix_in_p}
Let $S$ be a finite band that satisfies \lam.
Then Algorithm~\ref{alg:cp_infix} solves $\cpinfix(S)$ in polynomial time.
\end{thm}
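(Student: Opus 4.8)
The plan is to describe Algorithm~\ref{alg:cp_infix} explicitly and then prove its correctness and polynomial running time. First I would have the algorithm maintain a set $Y$ of "partial solutions": elements of $\subuni A$ that are products of generators from $A$ and that could potentially be extended to a genuine solution $y$ with $dye = c$. The natural invariant to enforce is that every element of $Y$ lies $\gjge$ the target $\gj$-class in the appropriate sense, i.e. we only keep $\gj$-minimal-ish candidates, and that $Y$ is closed (up to the reductions justified by Lemma~\ref{lma:cp_infix_1}) under the two operations that (a)~prepend/append a generator and (b)~multiply two existing candidates that share a common left fixator $h$. Concretely, the algorithm starts with $Y := A$ (or the subset of $A$ that already behaves correctly), then repeatedly applies: for $x \in Y$ and $a \in A$, form $xa$ and $ax$; whenever a newly formed product $p$ satisfies $dpe = c$ we are done and output $p$; otherwise we only add $p$ to $Y$ after reducing it via Lemma~\ref{lma:cp_infix_1}\ref{it2_lma:cp_infix} so that it stays short. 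If the process stabilizes without ever producing a solution, the algorithm outputs \false.

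The key steps, in order, are: (1)~State the algorithm precisely, including the bookkeeping that keeps every stored word of length bounded by $|S^n|$ or by a polynomial in $n$ — here Lemma~\ref{lma:cp_infix_1}\ref{it2_lma:cp_infix} is the workhorse, since it lets us collapse $xyz \mapsto xz$ whenever $x \gjle y,z$, which is exactly the situation that arises because every generator (and hence every product of generators) lies $\gjge e \gjge d$. (2)~Prove the \emph{soundness} direction: anything the algorithm outputs is a genuine solution — immediate, since we only output $p$ after checking $dpe = c$. (3)~Prove the \emph{completeness} direction: if a solution $y \in \subuni A$ exists at all, then some solution is found. This is the heart of the argument: take a solution $y$, write it as a product $a_{i_1} \cdots a_{i_m}$ of generators, and show by induction on $m$ (using Lemma~\ref{lma:cp_infix_1}\ref{it1_lma:cp_infix} to merge prefixes that share a common left fixator, and \ref{it2_lma:cp_infix} to discard redundant middle factors) that a solution of bounded "complexity" — one reachable by the algorithm's closure process — also exists. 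The common left fixator $h$ needed to invoke part~\ref{it1_lma:cp_infix} should come from the fact that inside a single $\gj$-class (a rectangular band) left multiplication is highly degenerate, so one can always find an idempotent $h$ fixing the relevant pieces on the left; identifying that $h$ correctly is the subtle point. (4)~Bound the running time: the number of distinct candidates stored is at most $|S|^n$, but since we are in $\ptime$ we actually need the stored set to have polynomial size, which is forced by always reducing candidates to a canonical short form before insertion, so there are at most polynomially many of them, and each closure step costs $\mathcal O(n)$ semigroup operations; hence the whole loop runs in polynomial time.

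The main obstacle I expect is step~(3), the completeness proof, and within it the precise formulation of the induction hypothesis and the correct choice of the fixator $h$ in Lemma~\ref{lma:cp_infix_1}\ref{it1_lma:cp_infix}. The difficulty is that an arbitrary factorization $y = a_{i_1}\cdots a_{i_m}$ of a solution need not decompose along the lines the two reduction rules of Lemma~\ref{lma:cp_infix_1} require; one has to argue that, after passing through the $\gj$-structure (Lemma~\ref{lma:band_j_cong}: $S/{\gj}$ a semilattice, each $\gj$-class a rectangular band) and using that $d \gjle e \gjle a$ for every generator, the solution can be rewritten into a form where consecutive blocks share left fixators and redundant interior blocks can be deleted, so that it collapses into something the algorithm's saturation actually constructs. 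Getting the invariant right — "$Y$ contains a solution whenever one exists, in reduced form" — and verifying it is preserved under every closure step is where the real work lies; the polynomial-time bound is then a routine consequence of the reductions keeping all stored data of polynomial size.
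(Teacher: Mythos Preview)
Your proposal describes a different algorithm from the paper's, and the one you sketch has a genuine gap in the running-time analysis. You propose a saturation procedure: maintain a set $Y \subseteq \subuni A$, close it under multiplying by generators, and use Lemma~\ref{lma:cp_infix_1}\ref{it2_lma:cp_infix} to keep each stored word ``short''. But shortness of the words does not bound the number of distinct elements of $\subuni A$ you might generate: $\subuni A \subseteq S^n$ can have exponentially many elements, and nothing in your plan gives a canonical form with only polynomially many equivalence classes. Your step~(4) acknowledges this (``at most $|S|^n$'') and then asserts the bound is polynomial because it has to be --- that is the gap.

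The paper's Algorithm~\ref{alg:cp_infix} is not a saturation at all. It fixes a first generator $a_0$, finds (componentwise in $S^n$, \emph{not} in $\subuni A$) an element $s$ with $da_0se = c$, and then greedily extends a single candidate $y$ by appending pairs $a_2a_3$ of generators with $a_3 \not\gjge y$. The point is that each such extension strictly decreases $y$ in the $\gjle$-preorder on $S^n$, whose height is at most $n$ times the height of $S/{\gj}$; hence there are only $O(n)$ iterations, and that is where polynomial time comes from. The auxiliary $s$ --- which you do not have --- is what makes the completeness argument go through: it serves as a fixed right-hand ``target'' so that Lemma~\ref{lma:cp_infix_1}\ref{it1_lma:cp_infix} can be applied to splice the current $y$ with a known solution $z$ (using $h = a_0$ as the common left fixator), and then Lemma~\ref{lma:cp_infix_1}\ref{it2_lma:cp_infix} extracts the single generator $a_2$ needed for the next step. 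Your instinct that part~\ref{it1_lma:cp_infix} needs a carefully chosen $h$ is right, but the paper gets it for free as the first letter $a_0$ of the candidate, not from rectangular-band structure.
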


\begin{algorithm}
\caption{\newline
Solves $\cpinfix(S)$ in polynomial time if the band $S$ satisfies \lam.}
\label{alg:cp_infix}
\begin{algorithmic}[1]
\Input $c,d,e \in S^n,\,A\subseteq S^n$ as in the definition of $\cpinfix(S)$.
\Output{$y \in \subuni A$ such that $dye = c$ if it exists; \false{} otherwise.}
	\For{$a_0 \in A$ with $\exists s \in S^n \colon s \gjgec e,\, d a_0 s e = c$ }
	\label{alg:cp_infix_for1}
	\label{alg:cp_infix_find_s}
			\State\Set{$s := a_0 s$}	\label{alg:cp_infix_set_s_1}
			\State\Set{$y := a_0$}		\label{alg:cp_infix_set_y_1}
			\algblock{until}{e}
				\until\ $\exists a_1 \in A \colon a_1 \gjgec y,\, dya_1e = c$ \textbf{do}
				\label{alg:cp_infix_if1}
				\If{$\exists a_2,a_3 \in A \colon a_2 \gjgec y,\, a_3 \not \gjgec y,\, dya_2a_3se = c 
					$}		\label{alg:cp_infix_if2}
					\State\Set{$y := y a_2 a_3$}	\label{alg:cp_infix_set_y_2}
				\Else	\label{alg:cp_infix_else}
					\State\Continue{} for loop
				\EndIf
				\label{alg:cp_infix_endif}
			\e \textbf{nd until}
			\State\Return $ya_1$
			\Comment{$a_1$ from line~\ref{alg:cp_infix_if1}}
			\label{alg:cp_infix_ret_true}
	\EndFor
	\State \Return{\false}
	\label{alg:cp_infix_ret_false}
\end{algorithmic}
\end{algorithm}

\begin{proof}
Before we deal with correctness and complexity, we need some preparation.
We fix an instance $c,d,e \in S^n,\,A \subseteq S^n$ of $\cpinfix(S)$.
We claim that
\begin{align}
\label{cl1_thm:cp_infix_in_p}
\text{the until loop iterates at most $\mathcal O(n)$ times.}
\end{align}
Let $h$ be the height of the semilattice $S/{\gj}$.
It suffices to show that the if branch in line~\ref{alg:cp_infix_if2}
is entered at most $n(h-1)$ times.
Whenever this happens, $y$ is modified.
This modification has the form $y := y a_2 a_3$.
By line~\ref{alg:cp_infix_if2} $a_3 \not \gjgec y$, and thus $y a_2 a_3 \not \gjgec y$. We have
\begin{align*}\begin{split}
&\forall i \in \rg{n} \colon y a_2 a_3(i) \gjle y(i), \\
&\exists i \in \rg{n} \colon y a_2 a_3(i) \gjless y(i).
\end{split}\end{align*}
Hence $ya_2a_3$ is strictly smaller than $y$ in the preorder $\gjlec$. We can decrease $y$ at most $n(h-1)$ times. 
We proved~\eqref{cl1_thm:cp_infix_in_p}. In particular the algorithm stops.

\emph{Correctness of Algorithm~\ref{alg:cp_infix}}.
First assume Algorithm~\ref{alg:cp_infix} returns some $z\ne{}$\false.
This can only happen in line~\ref{alg:cp_infix_ret_true}, and thus $z = y a_1$.
From line~\ref{alg:cp_infix_if1} follows $d y a_1 e = c$.
We have $y \in \subuni A$ since
the only lines where $y$ is modified are~\ref{alg:cp_infix_set_y_1} and~\ref{alg:cp_infix_set_y_2}.
So $z = y a_1$ is a solution of $\cpinfix(S)$.

Conversely assume that $\cpinfix(S)$ has a solution $z \in \subuni A$.
Our goal is to show that Algorithm~\ref{alg:cp_infix} returns some solution rather than \false.
Let $b_1,\ldots,b_m \in A$ such that $z=b_1 \cdots b_m$.
For the remainder of the proof of correctness we may assume that 
\begin{equation}\label{eq10_alg:cp_infix}
\text{the variable $a_0$ of the for loop is set to $b_1$.}
\end{equation}
It suffices to show that the algorithm returns some solution in line~\ref{alg:cp_infix_ret_true} for this case.
The $s$ in line~\ref{alg:cp_infix_find_s} exists since $s := z$ is one possibility. Fix the value assigned to $s$ in line~\ref{alg:cp_infix_set_s_1}.
For each value assigned to $y$ in Algorithm~\ref{alg:cp_infix}, we claim that
\begin{align}\label{cl2_thm:cp_infix_in_p}
y \in \subuni A, \quad a_0 y = y ,\quad \text{and} \quad dyse = c.
\end{align}
If $y$ obtained its value in line~\ref{alg:cp_infix_set_y_1}, then clearly~\eqref{cl2_thm:cp_infix_in_p} holds.
If not, then $y$ obtained its value by one or more calls of the assignment $y := y a_2 a_3$ in line~\ref{alg:cp_infix_set_y_2}. So the first two statements in~\eqref{cl2_thm:cp_infix_in_p} follow by induction, and line \ref{alg:cp_infix_if2} implies $dyse = c$.

Next we claim that for each value assigned to $y$,
\begin{align}\label{cl3_thm:cp_infix_in_p}
\text{if $y \gjlec z$, then the condition in line~\ref{alg:cp_infix_if1} is fulfilled.}
\end{align}
Fix such $y \gjlec z$. By~\eqref{cl2_thm:cp_infix_in_p} and the assumptions we have
\[ dyse = c, \quad dze = c, \quad a_0 y = y, \quad a_0 z = z. \]
Now apply Lemma~\ref{lma:cp_infix_1}\tlref{it1_lma:cp_infix} and obtain $dyze = c$.
Note that instead of $A$ we use $A':=\{ a \in S^n \sst a \gjge e \}$ for the hypothesis of Lemma~\ref{lma:cp_infix_1}.
For $a_1:=b_m$ we have $z=za_1$ and thus
\begin{align*}
dyza_1e = c,   \quad y \gjlec z \gjlec a_1.
\end{align*}
Lemma~\ref{lma:cp_infix_1}\tlref{it2_lma:cp_infix} yields $dya_1e = c$.
This proves $\eqref{cl3_thm:cp_infix_in_p}$.

Similar to~\eqref{cl3_thm:cp_infix_in_p} we claim that for each value assigned to $y$,
\begin{align}\label{cl4_thm:cp_infix_in_p}
\text{if $y \not \gjlec z$, then the condition in line~\ref{alg:cp_infix_if2} is fulfilled.}
\end{align}
Fix such $y \not \gjlec z$. 
Let $b_i$ be the first generator of $z$ with $b_i \not \gjgec y$. 
By \eqref{eq10_alg:cp_infix} and \eqref{cl2_thm:cp_infix_in_p} we have $b_1=a_0 \gjgec y$. Thus $b_i\neq b_1$. 
Let $z_1 := b_1\cdots b_{i-1}$, $a_2 := b_{i-1}$, and $a_3:=b_i$.
Idempotence implies $z_1 a_2 a_3 z = z$.
We have
\begin{align*}\begin{aligned}
d(z_1a_2a_3)ze &= c,	&& \\
dse &= c 				&&\text{by lines~\ref{alg:cp_infix_find_s} and~\ref{alg:cp_infix_set_s_1},}\\
a_0 z_1 &= z_1		&&\text{by~\eqref{eq10_alg:cp_infix},} \\
a_0 s &= s 			&&\text{by line~\ref{alg:cp_infix_set_s_1}.}
\end{aligned}\end{align*}
We apply Lemma~\ref{lma:cp_infix_1}\tlref{it1_lma:cp_infix} and obtain $d (z_1 a_2 a_3) s e = c$. Now we have
\begin{align*}\begin{aligned}
dyse &= c && \text{by~\eqref{cl2_thm:cp_infix_in_p},} \\
d (z_1 a_2 a_3 s) e &= c, && \text{} \\
a_0 y &= y && \text{by~\eqref{cl2_thm:cp_infix_in_p},} \\
a_0 z_1 &= z_1.		&&
\end{aligned}\end{align*}
Apply Lemma~\ref{lma:cp_infix_1}\tlref{it1_lma:cp_infix} again and obtain $d y (z_1 a_2 a_3 s) e = c$.
For $e':=a_3se$ we have $e' \gj e$ and
\begin{align*}\begin{aligned}
d y z_1 a_2 e' = c,& &&\text{} \\
e' \gjlec y \gjlec z_1 \gjlec a_2& &&\text{by the definitions of $z_1$ and $a_2$.}
\end{aligned}\end{align*}
From Lemma~\ref{lma:cp_infix_1}\tlref{it2_lma:cp_infix} we obtain $d y a_2 e' = d y a_2 (a_3 s e) = c$.
This proves~\eqref{cl4_thm:cp_infix_in_p}.

We are ready to complete the proof of correctness.
By the assumption of~\eqref{eq10_alg:cp_infix} and
by~\eqref{cl3_thm:cp_infix_in_p} and~\eqref{cl4_thm:cp_infix_in_p}, 
Algorithm~\ref{alg:cp_infix} does not enter the else branch in line~\ref{alg:cp_infix_else}.
By~\eqref{cl1_thm:cp_infix_in_p} the until loop stops after $\mathcal{O}(n)$ iterations.
After that $y a_1$ is returned in line~\ref{alg:cp_infix_ret_true}.
By~\eqref{cl2_thm:cp_infix_in_p} and line~\ref{alg:cp_infix_if1}, $y a_1$ is a solution of $\cpinfix(S)$.

\emph{Complexity of Algorithm~\ref{alg:cp_infix}}.
By~\eqref{cl1_thm:cp_infix_in_p} the until loop iterates at most $\mathcal{O}(n)$ times.
Evaluating the condition in line~\ref{alg:cp_infix_if1} requires $\mathcal{O}(|A|)$ multiplications in $S^n$, and the condition in line~\ref{alg:cp_infix_if2} requires $\mathcal{O}(|A|^2)$ multiplications.
Thus $\mathcal{O}(n|A|^2)$ multiplications in $S$ are performed in one iteration of the until loop.
Therefore the effort for the until loop is $\mathcal{O}(n^2|A|^2)$.

The tuple $s$ in line~\ref{alg:cp_infix_find_s} can be found componentwise.
In particular, for each $i \in \rg{n}$ we find $s(i) \in S$ such that $da_0se(i) = c(i)$.
This process requires $\mathcal{O}(n)$ steps.
Lines~\ref{alg:cp_infix_set_s_1} and~\ref{alg:cp_infix_set_y_1} require $\mathcal{O}(n)$ steps.
Altogether one iteration of the for loop requires $\mathcal{O}(n^2|A|^2)$ time.
Hence Algorithm~\ref{alg:cp_infix} runs in $\mathcal{O}(n^2|A|^3)$ time.
\end{proof}

For finite bands $S$ we define the intermediate problem $\cpsuffix(S)$.
We will show that $\cpsuffix(S)$ is in \ptime{} if $S$ satisfies \hyperlink{ht:lam}{\lam}.
\cproblem{\cpsuffix(\textit{S})}{
$A \subseteq S^n ,\, b \in S^n$.}{
Some $x \in \subuni A$ such that $x \glc b$ if it exists; \false{} otherwise.}
As usual we call an output $x \in \subuni A$ a \emph{solution} of $\cpsuffix(S)$.
Every solution $x$ fulfills $bx = b$, and thus $x$ is a `suffix' of $b$.
Hence the name of the problem.

\begin{thm}
\label{thm:cp_suffix_in_p}
Let $S$ be a finite band that satisfies \lam.
Then Algorithm~\ref{alg:psuffix} solves $\cpsuffix(S)$ in polynomial time.
\end{thm}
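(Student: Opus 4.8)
My plan is to prove Theorem~\ref{thm:cp_suffix_in_p} along the same lines as Theorem~\ref{thm:cp_infix_in_p}: reduce to generators lying above $b$, bound the number of iterations of the loop of Algorithm~\ref{alg:psuffix}, prove soundness of the output, and finally --- the crux --- prove completeness, i.e.\ that whenever $\cpsuffix(S)$ has a solution the algorithm returns one rather than \false. Two preliminary reductions are essentially free. First, if $x=a_1\cdots a_m\in\subuni A$ satisfies $x\glc b$, then $x\gj b$, hence $b\gjlec a_i$ for every $i$ by Lemma~\ref{lma:band_j_cong}; so we may discard every $a\in A$ with $a\not\gjgec b$ and search inside $\subuni{A'}$ for $A':=\{a\in A\mid a\gjgec b\}$. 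Second, every $g\in\subuni{A'}$ then satisfies $b\gjlec g$, which makes the $\gjle$-part of the hypothesis of \lam{} available whenever we multiply on the left or right by such $g$'s. The role played by Lemma~\ref{lma:cp_infix_1} in the proof of Theorem~\ref{thm:cp_infix_in_p} will be played here by an analogous combine-and-reduce fact for $\gl$-solutions, derived from \lam{} and the band rules of Section~2: for $x,y,z\in\subuni{A'}$, if $xy\glc b$ and $z\glc b$ and some $h\in S^n$ has $hx=x$ and $hz=z$, then $xz\glc b$; and if $xyz\glc b$ with $x\gjlec y$ and $x\gjlec z$, then $xz\glc b$. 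As noted after the statement of \lamd, the $\gjle$-part of the hypothesis of \lam{} is equivalent to identities furnished by idempotence and Lemma~\ref{lma:band_rule1}, so only the remaining atomic formula of \lam{} has to be checked for these applications.

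\emph{Termination, soundness, complexity.} As in claim~\eqref{cl1_thm:cp_infix_in_p}, I expect the current candidate $x\in S^n$ to strictly decrease in the preorder $\gjlec$ each time the loop performs a genuine refinement step (adjoining factors $a_2,a_3$ with $a_3\not\gjgec x$, whence $xa_2a_3\gjless x$ in $S^n$ by Lemma~\ref{lma:gjle_band_componentwise}); since a strictly descending $\gjlec$-chain in $S^n$ has length $\mathcal O(n)$ for fixed $S$, the loop runs $\mathcal O(n)$ times and the algorithm halts. Soundness needs only the band axioms: if the algorithm returns $x\neq\false$, then $x$ was built from elements of $A$ using products only, so $x\in\subuni A$, and the exit condition forces $xb=x$ and $bx=b$, i.e.\ $x\glc b$ by Lemma~\ref{lma:band_rule1}\lref{it2_lma:band_rule1}. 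For complexity one counts, exactly as in Theorem~\ref{thm:cp_infix_in_p}: $\mathcal O(n)$ iterations, each evaluating a success test ($\mathcal O(|A|)$ products in $S^n$) and a refinement test ($\mathcal O(|A|^2)$ products), times a possible outer loop over the first generator, giving running time polynomial in $n$ and $|A|$, hence in the input size.

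\emph{Completeness.} This is where \lam{} enters. Suppose $\cpsuffix(S)$ has a solution and fix $z=b_1\cdots b_m\in\subuni{A'}$ with $z\glc b$. Mirroring~\eqref{eq10_alg:cp_infix}, I would restrict attention to the run of the algorithm in which the initial choices track $z$ (e.g.\ the ``first generator'' is $b_1$). I then expect to maintain, for every value assigned to the candidate $x$, invariants analogous to~\eqref{cl2_thm:cp_infix_in_p}, of the form $x\in\subuni A$, $b_1x=x$, and $bx=b$: the first two by induction on the update rule, the third from $b\gjlec g$ for $g\in\subuni{A'}$. From these one shows, as in~\eqref{cl3_thm:cp_infix_in_p}, that if the current $x$ already satisfies $x\glc b$ then the success test fires; and, as in~\eqref{cl4_thm:cp_infix_in_p}, that otherwise the refinement test fires with $a_2,a_3$ read off from $z$ (take $b_i$ to be the first factor of $z$ failing the relevant $\mathcal R$- or $\mathcal J$-comparison with $x$, put $z_1:=b_1\cdots b_{i-1}$, and collapse $z_1a_2a_3z=z$ and similar products by idempotence). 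Each of these two steps is an application of the two consequences of \lam{} above. Together with the termination bound, this forces the algorithm to leave the loop through its ``return $x$'' line, and soundness then guarantees the returned $x$ is a solution.

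The main obstacle is the completeness argument: pinning down the exact loop invariants of Algorithm~\ref{alg:psuffix} for the $\gl$-relation (the analogues of~\eqref{cl2_thm:cp_infix_in_p}) and checking that \lam{} really licenses the two implications above --- in particular arranging the $\gjle$-bookkeeping among $b$, the candidate $x$, the designated suffix generator, and the prefix $b_1\cdots b_{i-1}$ of $z$ so that the hypotheses of the combine-and-reduce fact are literally met at each step. Once the right invariants are in place, the rest is routine bookkeeping of the kind already carried out for $\cpinfix(S)$.
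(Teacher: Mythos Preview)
Your proposal misreads Algorithm~\ref{alg:psuffix}. You describe an outer for-loop over a ``first generator'', an update of the form $x:=xa_2a_3$, and invariants like $b_1x=x$; none of these appear in Algorithm~\ref{alg:psuffix}. The actual algorithm has a single while loop, the update is $x:=ayx$ (prepending, not appending), and---crucially---line~\ref{alg:psuffix_find_y} finds $y$ by solving an instance of $\cpinfix(S)$ for each candidate $a\in A$. That subroutine call is where \lam{} enters: it is used only through Theorem~\ref{thm:cp_infix_in_p}, not via a new $\gl$-version of Lemma~\ref{lma:cp_infix_1} as you propose.

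Once you see this, the paper's proof is much shorter than what you outline. Termination: $a\not\gjgec x$ forces $ayx\gjless x$, so the while loop runs $\mathcal O(n)$ times. Soundness: the invariant is simply $x\in\subuni A$ and $bx=b$, immediate from lines~\ref{alg:psuffix_find_x} and~\ref{alg:psuffix_find_y}. Completeness: given a solution $z=c_1\cdots c_m\glc b$, the initial $x$ exists (take $x=c_m$, since $bc_m=bzc_m=bz=b$); and if $x\not\glc b$ then $x\gjgt b\gj z$, so pick $i$ \emph{maximal} with $c_i\not\gjgec x$, set $a:=c_i$ and $y:=c_{i+1}\cdots c_m$ (or $y:=x$ if $i=m$), and check $bayx=bzc_iyx=bzx=bx=b$ by idempotence alone. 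This shows the required $a,y$ exist; the $\cpinfix$ subroutine, correct by Theorem~\ref{thm:cp_infix_in_p}, then finds some such pair. No combine-and-reduce lemma for $\gl$-solutions is needed, and your proposed invariant $b_1x=x$ plays no role. The complexity is dominated by the $\mathcal O(n|A|)$ calls to $\cpinfix(S)$, giving $\mathcal O(n^3|A|^4)$ overall---your estimate misses the cost of these subroutine calls.
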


\begin{algorithm}
\caption{\newline{}%
Solves $\cpsuffix(S)$ in polynomial time if the band $S$ satisfies \lam.}
\label{alg:psuffix}
\begin{algorithmic}[1]
\Input $A \subseteq S^n ,\, b \in S^n$
\Output{$x \in \subuni A$ such that $x \glc b$ if it exists; \false{} otherwise.}
	\State find $x \in A$ such that $b x = b$
	\label{alg:psuffix_find_x}
	\State \Return \false{} if no such $x$ exists
	\label{alg:psuffix_ret_false_1}
	\While{$x \not \glc b$ }
	\label{alg:psuffix_while}
			\State find $a \in A,\,a \gjge b,\,a \not \gjgec x$ and $
				y \in \subuni A,\, y \gjge x$ such that $(b a) y x = b$
			\label{alg:psuffix_find_y}
			\item[]
			\Comment{at most $|A|$ instances of $\cpinfix(S)$}
		\label{alg:psuffix_endfor}
			\State\Return \false{} if no such $a,y$ exist
			\State\Set{$x := ayx$}		\label{alg:psuffix_set_x}
	\EndWhile
	\label{alg:psuffix_ret_while_end}
	\State\Return $x$
	\label{alg:psuffix_ret_true}
\end{algorithmic}
\end{algorithm}

\begin{proof}
Before we prove correctness and complexity, we need some preparation.
To show that the algorithm always stops, we claim that
\begin{align}\label{cl1_thm:cp_suffix_in_p}
\text{the while loop iterates at most $\mathcal O(n)$ times.}
\end{align}
Let $h$ be the height of the semilattice $S/{\gj}$.
In each iteration, either the algorithm terminates, or $x$ is modified by $x := a y x$.
By line~\ref{alg:psuffix_find_y} $a \not \gjgec x$.
Thus also $a y x \not \gjgec x$.
We have
\begin{align*}\begin{split}
&\forall i \in \rg{n} \colon a y x(i) \gjle x(i), \\
& \exists i \in \rg{n} \colon a y x(i) \gjless x(i).
\end{split}\end{align*}
Therefore the number of modifications of $x$ is at most $n(h-1)$.
We proved~\eqref{cl1_thm:cp_suffix_in_p}.

Later in the proof we use the following:
\begin{align}\label{eq15_thm:cp_suffix_in_p}
x \glc  b   \quad\text{if and only if}\quad   x \gjc b   \text{ and }   bx = b.
\end{align}
This follows immediately since every $\gj$-class is a rectangular band.

\emph{Correctness of Algorithm~\ref{alg:psuffix}}.
First we claim that
\begin{align}\begin{split}\label{eq10_thm:cp_suffix_in_p}
& \text{each value assigned to $x$ fulfills $x \in \subuni A$ and $bx = b$.}
\end{split}\end{align}
After initializing $x$ in line~\ref{alg:psuffix_find_x},~\eqref{eq10_thm:cp_suffix_in_p} clearly holds.
The value of $x$ is possibly modified by $x := ayx$ in line~\ref{alg:psuffix_set_x}.
If $x \in \subuni A$, then also $ayx \in \subuni A$.
From line~\ref{alg:psuffix_find_y} we know that $bayx = b$. 
This proves~\eqref{eq10_thm:cp_suffix_in_p}.

Now assume Algorithm~\ref{alg:psuffix} returns some $x \neq\false$.
Then the while loop has finished. This implies $x \glc b$.
By~\eqref{eq10_thm:cp_suffix_in_p} $x$ is a solution of $\cpsuffix(S)$.

Conversely assume $\cpsuffix(S)$ has a solution $z \in \subuni A$.
Our goal is to show that Algorithm~\ref{alg:psuffix} returns some solution.
We fix $c_1,\ldots,c_m \in A$ such that $z=c_1 \cdots c_m$.
The $x$ in line~\ref{alg:psuffix_find_x} exists.
For instance for $x=c_m$ we have $b = bz = bzx = bx$.

Now fix a value assigned to $x$ in line~\ref{alg:psuffix_find_x} or line~\ref{alg:psuffix_set_x}.
After this assignment, the while loop is called.
If $x \glc b$, then the algorithm 
returns $x$, which is a solution by~\eqref{eq10_thm:cp_suffix_in_p}.
Assume $x \not \glc b$. 
By~\eqref{eq15_thm:cp_suffix_in_p} and~\eqref{eq10_thm:cp_suffix_in_p}
we have $x \gjgt b$. We claim that $a$ and $y$ as defined in line~\ref{alg:psuffix_find_y} exist,
that is
\begin{equation}
\label{eq_20_thm:cp_suffix_in_p}
\exists a \in A,\,a \gjge b,\,a \not \gjgec x \ 
\exists y \in \subuni A,\, y \gjge x \colon b a y x = b.
\end{equation}
Since $z \gjc b$ and $b \not \gjgec x$, there is a $c_i$ with $c_i \not \gjgec x$.
Assume $i\in\rg{m}$ is maximal with this property.
If $i=m$, then let $y := x$.
Otherwise let $y := c_{i+1}\cdots c_{m}$.
In both cases $y \in \subuni A$ and $y \gjge x$.
Idempotence implies $zx = zc_iyx$.
Hence
\begin{align*}
b = bzx = bzc_iyx = bc_iyx.
\end{align*}
Note that $c_i \gjgec b$ since $S/{\gj}$ is a semilattice.
This proves~\eqref{eq_20_thm:cp_suffix_in_p}.
Now we know that \false{} is never returned in the while loop.
By~\eqref{cl1_thm:cp_suffix_in_p} the while loop finishes after finitely many iterations.
After that $x \glc b$ holds, and the solution $x$ is returned.

\emph{Complexity of Algorithm~\ref{alg:psuffix}}.
Line~\ref{alg:psuffix_find_x} requires at most $\mathcal{O}(n|A|)$ multiplications in $S$.
By~\eqref{cl1_thm:cp_suffix_in_p} the while loop iterates at most $\mathcal O(n)$ times.
In line~\ref{alg:psuffix_find_y} the algorithm iterates over $a$ and $y$. 
If we consider $a$ as fixed, then
the algorithm tries to find $y \in \subuni A,\,y \gjge x$ such that $(ba)yx=b$. 
Finding such $y$ is an instance of $\cpinfix(S)$.
Thus in line~\ref{alg:psuffix_find_y} at most $|A|$ instances of $\cpinfix(S)$ have to be solved.
By the proof of Theorem~\ref{thm:cp_infix_in_p}, one instance runs in time $\mathcal{O}(n^2|A|^3)$.
Thus the while loop requires $\mathcal{O}(n^3|A|^4)$ steps.
Altogether Algorithm~\ref{alg:psuffix} runs in time $\mathcal{O}(n^3|A|^4)$.
\end{proof}

\begin{thm}
\label{thm:smp_in_p}
Let $S$ be a finite band that satisfies \lam{} and \lamd.
Then Algorithm~\ref{alg:smp} decides $\smp(S)$ in polynomial time.
\end{thm}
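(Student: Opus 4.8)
The plan is to give Algorithm~\ref{alg:smp} that decides $\smp(S)$ for a band $S$ satisfying both \lam{} and \lamd{} by reducing the problem coordinate-by-$\gj$-class. Given an instance $\{a_1,\ldots,a_k\}\subseteq S^n$, $b\in S^n$, I would first observe (using Lemma~\ref{lma:band_j_cong} and Lemma~\ref{lma:gjle_band_componentwise}) that a necessary condition for $b\in\subuni{a_1,\ldots,a_k}$ is the existence of some product of generators lying in the same $\gj$-tuple as $b$; in fact $b$ must be $\gj$-below a suitable subproduct. The key structural idea is that in a band, if $b\in\subuni{A}$ then $b$ can be written as a product $w=g_1\cdots g_m$ of generators, and by idempotence we may rearrange/pad so that $w$ factors as (a prefix that is $\gj$-above $b$)$\cdot b_{\text{mid}}\cdot$(a suffix that is $\gj$-above $b$), where the outer parts witness left/right membership. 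So the algorithm's strategy is: find a generator product $p$ with $p\grge b$ (a ``prefix'' with $bp\gj b$ on the correct side), find a generator product $q$ with $q\glge b$ (a ``suffix''), and then fill the middle using $\cpinfix(S)$.

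Concretely, I would structure Algorithm~\ref{alg:smp} as follows. Step~1: compute the $\gj$-class structure and check that there is a subproduct of the $a_i$ that is $\gj$-equal to $b$ componentwise; if not, return \false. Step~2: solve the ``prefix'' problem — find $x\in\subuni{A}$ with $x\grc b$ — which is exactly the dual of $\cpsuffix(S)$, hence solvable in polynomial time by Theorem~\ref{thm:cp_suffix_in_p} applied to the dual band $\dsgp S$ (which satisfies \lam{} by Lemma~\ref{lma:dualsgp_dualqid} and the fact that $S\models\lamd$). Step~3: symmetrically solve the ``suffix'' problem, finding $y\in\subuni{A}$ with $y\glc b$, using $\cpsuffix(S)$ directly via Theorem~\ref{thm:cp_suffix_in_p}. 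Step~4: now $xby$ need not equal $b$, but $x$ and $y$ are in the right $\gr$- and $\gl$-classes; set $c:=b$, $d:=xbx$ (or a suitable idempotent $\gr$-related to $b$ and $\gl$-related appropriately), $e:=$ the corresponding element, and invoke $\cpinfix(S)$ to find $z\in\subuni{A}$ with $d z e = c$; then $xzy$ (or the appropriate reassembly) is the desired generator product evaluating to $b$, and $b\in\subuni{A}$ iff this succeeds. Each sub-call runs in polynomial time, and there are only polynomially many of them, so the whole algorithm is polynomial.

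The main obstacle I anticipate is getting the ``gluing'' in Step~4 exactly right: one must argue that if $b\in\subuni{A}$ at all, then a witnessing product can always be normalized into the precise prefix/infix/suffix shape that the three subroutines are designed to recover, and conversely that whatever the subroutines return does multiply back to $b$. This requires a careful idempotence/Green's-relations argument showing that from any expression $b=g_1\cdots g_m$ one can extract a prefix product $\grge b$, a suffix product $\glge b$, and an infix product $z$ with $dze=c$ for the canonically chosen $d,e$ with $d\gjc b$, $d\gjlec e$, and $e\gjlec$ every generator used in $z$ — the hypotheses of $\cpinfix(S)$. The natural choice is to take $d$ and $e$ built from $b$ itself so that the $\cpinfix$ preconditions $c\gjc d$, $d\gjlec e$ hold automatically, and to restrict the generator set for the infix call to those $a_i$ with $e\gjlec a_i$ after first absorbing the others into the prefix and suffix. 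Completeness then reduces to showing that the generators not $\gj$-above $e$ can indeed be pushed to the ends, which is where Lemma~\ref{lma:band_rule1} and the semilattice structure of $S/{\gj}$ do the work.

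A secondary point to verify carefully is the complexity bookkeeping: $\cpsuffix(S)$ costs $\mathcal{O}(n^3|A|^4)$ by Theorem~\ref{thm:cp_suffix_in_p}, and the single (or few) $\cpinfix(S)$ calls cost $\mathcal{O}(n^2|A|^3)$ by Theorem~\ref{thm:cp_infix_in_p}, so the total is polynomial in $n$ and $k=|A|$; since $S$ is fixed, all constants depending on $|S|$ and on the height of $S/{\gj}$ are absorbed. I would also note that applying the dual versions of Theorems~\ref{thm:cp_infix_in_p} and~\ref{thm:cp_suffix_in_p} is legitimate precisely because $S\models\lamd$ means $\dsgp S\models\lam$ by Lemma~\ref{lma:dualsgp_dualqid}, so no new proofs are needed for the ``prefix'' side.
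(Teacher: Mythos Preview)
Your Steps~2 and~3 are exactly what the paper does, but your Step~4 is unnecessary, and the ``main obstacle'' you anticipate does not exist. Once you have $x\in\subuni{A}$ with $x\grc b$ and $y\in\subuni{A}$ with $y\glc b$, you are already done: from $x\grc b$ you get $xb=b$, from $y\glc b$ you get $by=b$, hence $xby=b$; but $x,b,y$ all lie in the same $\gj$-class of $S^n$, which is a rectangular band by Lemma~\ref{lma:band_j_cong}\ref{it3_lma:band_j_cong}, so $xby=xy$. Thus $b=xy\in\subuni{A}$ with no gluing or $\cpinfix$ call required. Conversely, if $b\in\subuni{A}$ then $x=y=b$ witnesses both conditions. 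This is the entirety of the paper's Algorithm~\ref{alg:smp}: return \true{} iff $\cpsuffix(S)$ and $\cpsuffix(\dsgp S)$ both succeed.

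So your proposal is not wrong, but it misses the one-line observation that collapses the argument. Your Step~1 is also redundant (if no element of $\subuni{A}$ is $\gj$-equivalent to $b$, then $\cpsuffix$ already returns \false{}), and your Step~4, while presumably completable, introduces exactly the bookkeeping you worried about---choosing $d,e$, restricting the generator set, pushing low generators to the ends---none of which is needed. The complexity analysis you give for Steps~2 and~3 is correct and matches the paper's: two calls at $\mathcal{O}(n^3|A|^4)$ each, using Lemma~\ref{lma:dualsgp_dualqid} to justify applying Theorem~\ref{thm:cp_suffix_in_p} to $\dsgp S$.
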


\begin{algorithm}
\caption{\newline{}%
Decides $\smp(S)$ in polynomial time if the band $S$ satisfies \lam{} and \lamd.}
\label{alg:smp}
\begin{algorithmic}[1]
\Input $A \subseteq S^n,\, b \in S^n$
\Output{\true{} if $b \in \subuni{ A }$; \false{} otherwise. }
	\State\Return $\exists x,y \in \subuni{A} \colon b \glc x \wedge b \grc y$
	\label{alg:smp_ret}
\end{algorithmic}
\end{algorithm}

\begin{proof}
\emph{Correctness of Algorithm~\ref{alg:smp}}.
If Algorithm~\ref{alg:smp} returns \true, then 
$bx=b=yb$. Thus $b=ybx=yx$ by Lemma~\ref{lma:band_rule1}\tlref{it1_lma:band_rule1} and hence $b\in\subuni A$.
Conversely assume $b \in \subuni A$.
Then $x$ and $y$ as defined in line~\ref{alg:smp_ret} exist since we can set $x = y = b$.
Algorithm~\ref{alg:smp} returns \true.

\emph{Complexity of Algorithm~\ref{alg:smp}}.
In line~\ref{alg:smp_ret} one instance of $\cpsuffix(S)$ and one of $\cpsuffix(\dsgp S)$ are solved.
By the proof of Theorem~\ref{thm:cp_suffix_in_p} and by Lemma~\ref{lma:dualsgp_dualqid} both can be decided in $\mathcal{O}(n^3|A|^4)$ time.
\end{proof}

\section{\np-hardness}
\label{sec:np_completeness_qvs}

In the previous section we showed that the \smp{} for a finite band that satisfies both
\hyperlink{ht:lam}{\lam} and \hyperlink{ht:lam}{\lamd} is in \ptime. In this section we will prove our dichotomy result Theorem~\ref{thm:dichotomy_bands_intro} by showing that the \smp{} for the remaining finite bands is \np-complete.

\begin{dfn}
\label{dfn:witness_q1p}
Let $S$ be a band.
We say $d,e,x,y,h$ \emph{witness} $S\not\models\lam$ if they satisfy the premise of \lam, but not the implication.
\end{dfn}

\noindent
Witnesses have the following properties.

\begin{lma}
\label{lma:witness1}
Let $S$ be a finite band such that $d,e,x,y,h\in S$ witness $S\not\models\lam$. Then
\begin{enumerate}
\item\label{lma:witness1_it01} $d \gjless e \gjless x \gjless h$,
\item\label{lma:witness1_it03} $e, xe, y$ are distinct,
\item\label{lma:witness1_it02} $d, dx, de, dxe$ are distinct.
\end{enumerate}
\end{lma}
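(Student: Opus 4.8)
The plan is to read off all three claims from the defining data of a witness --- the identities $dxye = de$, $hx = x$, $he = e$, the inequalities $d \gjle e \gjle x,y$, and, decisively, the \emph{non}-identity $dxe \ne de$. Every item will be proved contrapositively: assuming a claimed inequality or distinctness fails, I derive $dxe = de$. Throughout I would use $p \gjle q \iff pqp = p$ and $p \grle q \iff qp = p$ (Lemma~\ref{lma:band_rule1}), together with the fact that every $\gj$-class is a rectangular band, hence satisfies $uvw = uw$ (Lemma~\ref{lma:band_j_cong}\tlref{it3_lma:band_j_cong}).

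For~\lref{lma:witness1_it01}: first, $hx = x$ and $he = e$ give $x, e \grle h$, so $x, e \gjle h$, and combined with the premise this already yields the chain $d \gjle e \gjle x \gjle h$; only strictness remains. If $d \gj e$, then from $d \gjle x$ Lemma~\ref{lma:band_rule0} gives $dxe = de$. If $x \gj h$, then $x = hx$ is a product in the rectangular $\gj$-class of $x$, so $x \gr h$ (products in a rectangular band are $\gr$-related to their left factor), whence $xh = h$ and $xe = x(he) = (xh)e = he = e$, again giving $dxe = de$. The middle inequality $e \gjless x$ is the only step needing care: supposing $e \gj x$, I would first check $ye \gj e$ (one direction, $ye \gjle e$, is automatic; the other is $e(ye)e = eye = e$, using $e \gjle y$), so that $x$, $ye$, $e$ all lie in the rectangular $\gj$-class of $e$; then $xye = x\,(ye)\,e = xe$ by $uvw=uw$ and $e^2 = e$, so $dxe = dxye = de$. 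I expect this rectangular-band collapse to be the main (and only real) obstacle.

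For~\lref{lma:witness1_it03} and~\lref{lma:witness1_it02}: the remaining coincidences each collapse to $dxe = de$ after a one- or two-line substitution into $dxye = de$. Thus $e = xe$, $dx = d$, $dx = de$, $de = dxe$ are impossible on sight; $e = y$ and $xe = y$ give $dxye = dxe$ by idempotence; $d = de$ gives $dxe = dexe = de$ via $exe = e$ (from $e \gjle x$); $d = dxe$ gives $de = (dxe)e = dxe = d$; and $dx = dxe$, right-multiplied by $ye$, gives $dxye = dxeye = dxe$ via $eye = e$, hence $dxe = de$. Each contradicts $dxe \ne de$, so $e, xe, y$ are pairwise distinct and $d, dx, de, dxe$ are pairwise distinct, which finishes all three parts.
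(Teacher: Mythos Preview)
Your proposal is correct and follows essentially the same contrapositive strategy as the paper: in every subcase you assume the claimed strictness or distinctness fails and derive $dxe = de$. The only cosmetic differences are in part~\lref{lma:witness1_it01}, where the paper handles $x \gj h$ via $hxh = h$ (Lemma~\ref{lma:band_rule1}\tlref{it1_lma:band_rule1}) rather than your $\gr$-argument, and handles $e \gj x$ by a direct appeal to Lemma~\ref{lma:band_rule0} (with $x \gj e$ and $x \gjle y$ giving $xye = xe$) rather than first checking $ye \gj e$; both routes are valid and equally short.
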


\begin{proof}
\lref{lma:witness1_it01}
We have
\begin{align*}\begin{aligned}
\label{eq10:lma:witness1}
&\qd \qx \qy \qe = \qd \qe, \quad \qh \qx = \qx, \quad \qh \qe = \qe ,\\
&\qd \gjle \qe \gjle \qx, \qy \quad\text{and}\quad \qd \qx \qe \neq \qd \qe.
\end{aligned}\end{align*}
Thus $d \gjle e \gjle x \gjle h$.
We show the strictness of each inequality by assuming the opposite and deriving the contradiction $dxe = de$. \\
If $d \gj e$, then $dxe = de$ by Lemma~\ref{lma:band_rule0}. \\
If $e \gj x$, then $xe = xye$ by Lemma~\ref{lma:band_rule0}.
Thus $dxe = dxye = de$. \\
If $x \gj h$, then $hxh = h$ by Lemma~\ref{lma:band_rule1}\tlref{it1_lma:band_rule1}. Thus $dxe = d(hx)(he) = dhe = de$. 

\lref{lma:witness1_it03}
Assuming any equality, we derive the contradiction $dxe = de$. \\
If $e = xe$, then $dxe = de$. \\
If $e = y$, then $dxe = dxee = dxye = de$. \\
If $xe = y$, then $dxe = dxxee = dxye = de$.

\lref{lma:witness1_it02}
Assuming any equality, we derive the contradiction $dxe = de$. \\
If $d = dx$, then $dxe = de$. \\
If $d = de$, then $dxe = dexe = de$ since $exe = e$. \\
If $d = dxe$, then $dxe = dxee = de$. \\
If $dx = de$, then $dxe = dee = de$. \\
If $dx = dxe$, then $dxe = dxeye = dxye$ since $e = eye$. \\%}
\end{proof}

If $S \not \models \lam$, then there are witnesses with additional properties by the following lemma.

\renewcommand{\qh}{1}
\newcommand{\aw}[1]{\bar{#1}}

\begin{lma}\label{lma:T}
Let $S$ be a finite band that does not satisfy \lam.
Then there are $d,e,x,y,h \in S$ such that for $T := \subuni{ d,e,x,y,h }$ the following holds:
\begin{enumerate}
\item\label{it1_lma:T}
$h$ is the identity of $T$, and we have the following partial multiplication table whose entries are distinct.
\begin{equation*}\label{eq0_lma:T}
\begin{array}{c|ccccc}
 T &  x &  e & xe &  y &  d  \\
\hline
 x &  x & xe & xe &  y & xd  \\
 e &  e &  e &  e &  e &  d  \\
xe & xe & xe & xe & xe & xd  \\
 y &  y &  y &  y &  y & yd  \\ 
 d & dx & de &dxe & de &  d \end{array}
\end{equation*}
\item\label{it2_lma:T} 
$d,e,x,y,h$ witness $T\not\models\lam$ and $S\not\models\lam$.
\item\label{it3_lma:T}
$d/{\gj}$ is the smallest $\gj$-class of $T$, \\
$d/{\gr} = \{d, dx, de, dxe\}$, where the $4$ elements are distinct, and \\
$d/{\gl} = \{d, xd, yd\}$.
\item\label{it4_lma:T}
Either of the following holds:
\begin{enumerate}
\item\label{it4d_lma:T} $d = xd = yd$ and $|T|=9$.
\item\label{it4b_lma:T} $d = xd \neq yd $ and $|T|=13$.
\item\label{it4c_lma:T} $d \neq xd = yd $ and $|T|=13$.
\item\label{it4a_lma:T} $d, xd, yd$ are distinct and $|T|=17$.
\end{enumerate}
\end{enumerate}
\end{lma}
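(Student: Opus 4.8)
The plan is to start from a witness of $S\not\models\lam$, normalize it via size-reducing substitutions so that $h$ becomes the identity of $T$ and several auxiliary relations hold, and then read the multiplication table and the case distinction off the $\gj$-class structure of $T$.

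By hypothesis there is a witness $d,e,x,y,h$ (Definition~\ref{dfn:witness_q1p}); fix one minimizing $|\subuni{d,e,x,y,h}|$. Lemma~\ref{lma:witness1} gives, for free, the strict chain $d\gjless e\gjless x\gjless h$ and the distinctness of $e,xe,y$ and of $d,dx,de,dxe$. The first --- and I expect hardest --- step is to show that for a minimal witness $h$ is the identity of $T$. Replacing $y$ by $xy$ is harmless ($dx(xy)e=dxye=de$, and $e\gjle xy$ because $[e]\le[x]\wedge[y]$ in $T/{\gj}$) and puts all five generators $\gjle h$. One cannot simply conjugate the generators by $h$: since $d\gjle e,x$ implies $de\gr d\gr dxe$, the $\gj$-class of $d$ is a rectangular band in which $de$ and $dxe$ lie in the same $\gr$-class, and $z\mapsto hzh$ collapses each $\gr$-class of that rectangular band to a single element, so it would force $hdeh=hdxeh$ and destroy $dxe\ne de$. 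Instead I would argue by minimality: if $h$ were not the identity of $T$, one builds from the generators a strictly smaller witness inside a proper subsemigroup of $T$ determined by $h$ (keeping the premise with the help of $hx=x$, $he=e$, and $\gj$-classes with the help of $a\gjle h\Rightarrow hah\gj a$, while choosing the new elements so that $dxe\ne de$ is retained).

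Next, again using minimality, establish the relations $ex=e$, $ey=e$, $ed=d$, $xy=y$, $yx=y$, $ye=y$, $dy=de$, each by exhibiting a smaller witness whenever it fails; some follow from others (for instance $dy=de$ from $de=dxye=dye=dy$ once $xy=y$, $ye=y$). These make $\{e,xe,y\}$ a three-element left-zero band, identify the $\gj$-classes of $T$ as the chain $\{h\}>\{x\}>\{e,xe,y\}>d/{\gj}$ (so $d/{\gj}$ is the smallest), and, by closing $\{d,dx,de,dxe\}$ and $\{d,xd,yd\}$ under multiplication by the generators, give $d/{\gr}=\{d,dx,de,dxe\}$ (four distinct elements by Lemma~\ref{lma:witness1}\lref{lma:witness1_it02}) and $d/{\gl}=\{d,xd,yd\}$. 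Granting all this, the displayed partial multiplication table is a direct computation, and its asserted distinctness follows from Lemma~\ref{lma:witness1} and the relations, apart from coincidences among $d,xd,yd$.

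Finally, since $d/{\gj}$ is a rectangular band it is isomorphic to $d/{\gl}\times d/{\gr}$ with these as its $\gl$- and $\gr$-classes, so $|d/{\gj}|=4\,|d/{\gl}|$ and hence $|T|=1+1+3+4\,|d/{\gl}|$. Splitting on $|d/{\gl}|=|\{d,xd,yd\}|$ --- i.e.\ on which of $d=xd$, $d=yd$ hold --- yields the four cases of part~\lref{it4_lma:T} with $|T|\in\{9,13,13,17\}$. The bulk of the work, and the main obstacle, is the normalization: producing a minimal witness whose $h$ is an identity and whose other elements satisfy the seven relations, all without losing $dxe\ne de$; the remaining structure, the table, and the counting are then routine.
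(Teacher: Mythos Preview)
Your strategy via a size-minimal witness is genuinely different from the paper's. The paper does not argue by minimality at all: starting from \emph{any} witness $\bar d,\bar e,\bar x,\bar y,\bar h$ it writes down a single explicit substitution,
\[
h:=\bar h,\qquad x:=\bar x\bar h,\qquad e:=\bar e\bar x\bar h,\qquad y:=\bar x\bar y\bar e\bar x\bar h,\qquad d:=\bar e\bar x\bar h\bar d\bar h,
\]
and then checks the table and $dxe\ne de$ by short direct computations using only $\bar h\bar x=\bar x$, $\bar h\bar e=\bar e$, $\bar d\bar x\bar y\bar e=\bar d\bar e$ and $\bar e\gjle\bar x,\bar y$. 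This one move simultaneously makes $h$ a two-sided identity and forces $ex=e$, $ey=e$, $ed=d$, $xy=y$, $yx=y$, $ye=y$, $dy=de$; what you flag as ``the bulk of the work, and the main obstacle'' is thus dispatched in a single stroke, and parts~\lref{it3_lma:T} and~\lref{it4_lma:T} then follow exactly as you outline.

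Your route is workable in principle, but as written the key step is missing: you assert that each relation can be forced ``by exhibiting a smaller witness whenever it fails'' without exhibiting any of them. For example, after $y\to xy$ one can run $d\to hd$, $x\to xh$, $e\to eh$, $y\to yh$, $d\to dh$ to make $h$ an identity; at each step $dxe\ne de$ survives because, say, $dxeh=deh$ would give $dxe=de$ after right-multiplying by $e$ and using $he=e$. Then further replacements such as $e\to ex$ yield the remaining relations, but one must also verify that later replacements do not undo earlier ones. None of this is in your proposal. Incidentally, your stated reason for rejecting conjugation is wrong: $z\mapsto hzh$ does \emph{not} collapse $\gr$-classes of $d/{\gj}$, since $hdeh=hdxeh$ would force $de=dxe$ after multiplying on the left by $d$ (using $dhd=d$) and on the right by $e$ (using $he=e$). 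So the difficulty you describe is not the real one; the real content is finding the replacements, which the paper's explicit formula supplies all at once.
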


\begin{proof} 
Let $\aw d, \aw e, \aw x, \aw y, \aw h \in S$ witness $S\not\models\lam$. Define
\begin{align*}\begin{aligned}
d &:= \aw e\aw x\aw h\aw d \aw h, & e &:= \aw e\aw x\aw h, & x &:= \aw x \aw h,   \\
y &:= \aw x\aw y\aw e\aw x\aw h, & h &:= \aw h. && 
\end{aligned}\end{align*}

\lref{it1_lma:T}
Since $\aw h$ is a left identity for $\aw x$ and $\aw e$ and by idempotence, $h=\aw h$ is an identity for $d,e,x,y$.
In the first row of the multiplication table, 
the only nontrivial entry is 
\newcommand{\reqvspace}{
\phantom{(\aw e\aw x\aw h\underbr{\aw d \aw h) (\aw x\aw y\aw e %
     }{=\aw d\aw x\aw y\aw e=\aw d\aw e=\aw d\aw h\aw e}
 \aw x\aw h) 
     = (\aw e\aw x\aw h\aw d \aw h) (\aw e\aw x\aw h) = de.}}
\[ 
 xy = \mathrlap{
   \underbr{(\aw x\aw h)(\aw x}{\aw x}\aw y\aw e\aw x\aw h) 
 = \aw x\aw y\aw e\aw x\aw h = y.
}\reqvspace
\]
For the second row we use idempotence and Lemma~\ref{lma:band_rule1}\tlref{it1_lma:band_rule1}. We obtain
\begin{align*}
 ex &= \mathrlap{
 (\aw e\underbr{\aw x\aw h)(\aw x\aw h)}{\aw x\aw h} = \aw e\aw x\aw h = e,
}\reqvspace \\
 ey &= (\aw e\underbr{\aw x\aw h)(\aw x\aw y}{\gjge\aw e}\aw e\aw x\aw h) = \aw e\aw x\aw h = e.
\end{align*}
The remaining entries follow from these.
The third row is immediate from the second one.
For the last two rows it suffices to show that
\begin{align*}
 yx &= (\aw x\aw y\aw e\underbr{\aw x\aw h)(\aw x \aw h)}{\aw x\aw h} = \aw x\aw y\aw e\aw x\aw h = y, \\
 ye &= (\aw x\aw y\underbr{\aw e\aw x\aw h) (\aw e\aw x\aw h)}{\aw e\aw x\aw h} 
     = \aw x\aw y\aw e\aw x\aw h = y, \\
 dy &= (\aw e\aw x\aw h
 	\underbr{\aw d \aw h) (\aw x\aw y\aw e }
 	{=\aw d\aw x\aw y\aw e=\aw d\aw e=\aw d\aw h\aw e}
 	\aw x\aw h) 
     = (\aw e\aw x\aw h\aw d \aw h) (\aw e\aw x\aw h) = de.
\end{align*}
We will show that $x,e,xe,y,d$ are distinct after proving \lref{it2_lma:T}.

\lref{it2_lma:T}
First we show that
\begin{align}\label{eq30_lma:T}
dxye = de \neq dxe.
\end{align}
The equality follows from the multiplication table in~\lref{it1_lma:T}.
Now suppose $dxe = de$. Then $\aw d dxe \aw e = \aw d de \aw e$.
By Lemma~\ref{lma:band_rule1}\tlref{it1_lma:band_rule1} we obtain
\begin{align*}
\aw d dxe \aw e 
&= \underbr{\aw d(\aw e\aw x\aw h\aw d}{\aw d}\underbr{\aw h)(\aw x\aw h)}{\aw x}\underbr{(\aw e\aw x\aw h)\aw e}{\aw e} = \aw d\aw x\aw e, \\
\aw d de \aw e 
&= \underbr{\aw d(\aw e\aw x\aw h\aw d}{\aw d}\underbr{\aw h)(\aw e\aw x\aw h)\aw e}{\aw e} = \aw d\aw e.
\end{align*}
Now $\aw d\aw x\aw e = \aw d\aw e$, which is impossible.
Hence \eqref{eq30_lma:T} holds. 

From the multiplication table we see that
\begin{align}\label{eq40_lma:T}
d \gjle e \gjle x,y.
\end{align}
Now~\eqref{eq30_lma:T}, \eqref{eq40_lma:T}, 
$hx=x$, and $he=e$ yield item~\lref{it2_lma:T}.

For item~\lref{it1_lma:T} it remains to show that $x,e,xe,y,d$ are distinct.
By item~\lref{it2_lma:T} and Lemma~\ref{lma:witness1}\tlref{lma:witness1_it03} $e,xe,y$ are distinct.
From Lemma~\ref{lma:witness1}\tlref{lma:witness1_it01} follows $d \gjless e \gjless x \gjless h$, and from the multiplication table $e \gj xe \gj y$.
Thus the elements are all distinct.

\lref{it3_lma:T}
By~\eqref{eq40_lma:T} and since $T/{\gj}$ is a semilattice by Lemma~\ref{lma:band_j_cong}, $d/{\gj}$ is the smallest $\gj$-class of $T$. Thus we have $d/{\gr}=dT$ and $d/{\gl}=Td$ in $T$.
The multiplication table yields 
\[ dT=d\, \subuni{d,e,x,y,h}=\{d,dx,de,dxe\}.\]
The elements are distinct by Lemma~\ref{lma:witness1}\tlref{lma:witness1_it02}.
For $Td$ we obtain
\begin{align*}
&&\begin{aligned}
Td
 &= \subuni{d,e,x,y,h} d &&\\
 &= \subuni{e,x,y,h} d  
&&\text{since $dud = d$ for $u \in \subuni{e,x,y,h}$,} \\
 &= \subuni{x,y,h} d  
&&\text{since $eud = eued = ed = d$ for $u \in \subuni{x,y,h}$,} \\
&= \{xd,yd,d\}  &&\text{by the multiplication table.}\\
\end{aligned}
\end{align*}
Item~\lref{it3_lma:T} is proved.

\lref{it4_lma:T} We count the elements of $T$. Note that $T = \{h,x,e,xe,y\} \cup d/{\gj}$ and
\[ d/{\gj} = \{\ell dr \sst \ell \in \{h,x,y\},\, r \in \{h,x,e,xe\}\}. \] 
If $d=yd$, then $xd=xyd=yd=d$, and item~\lref{it4_lma:T}\lref{it4d_lma:T} holds.
If $d\ne yd$, then one of the remaining cases applies.
Using \gap{} \cite{GAP4} and the semigroups package \cite{GAP_semigroups},
it is easy to check that the semigroups for the cases
\lref{it4_lma:T}\lref{it4d_lma:T} to \lref{it4_lma:T}\lref{it4a_lma:T}
actually exist.
\end{proof}

\begin{clr}
\label{clr:witness2}
Let $S$ be a finite band. Then $S$ does not satisfy \lam{} if and only if one of the four non-isomorphic bands $T$ described in Lemma~\ref{lma:T} embeds into $S$. 
\end{clr}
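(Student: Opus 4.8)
The plan is to derive both implications directly from Lemma~\ref{lma:T}, using the elementary fact that the premise of \lam{} transfers between a band and its subsemigroups: the identities $dxye=de$, $hx=x$, $he=e$ and the inequation $dxe\ne de$ are absolute (products agree in a subsemigroup), and each of $d\gjle e$, $e\gjle x$, $e\gjle y$ transfers by the remark following Lemma~\ref{lma:band_rule1}. (Equivalently, on bands the atomic formulas of \lam{} are identities, so the bands satisfying \lam{} form a quasivariety, which is closed under subalgebras.)

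For the ``if'' direction, suppose one of the four bands $T$ of Lemma~\ref{lma:T} embeds into $S$ via some $\varphi$. By Lemma~\ref{lma:T}\tlref{it2_lma:T} the distinguished tuple $(d,e,x,y,h)$ of $T$ witnesses $T\not\models\lam$. Transporting it through $\varphi$ as above, the tuple $(\varphi(d),\varphi(e),\varphi(x),\varphi(y),\varphi(h))$ witnesses $S\not\models\lam$, so $S$ does not satisfy \lam.

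For the ``only if'' direction, suppose $S\not\models\lam$. By Lemma~\ref{lma:T} there are $d,e,x,y,h\in S$ such that $T:=\subuni{d,e,x,y,h}$ is a subsemigroup of $S$ falling under one of the cases~\ref{it4d_lma:T}--\ref{it4a_lma:T}. The remaining work is to see that the isomorphism type of $T$ is completely determined by which case holds: Lemma~\ref{lma:T}\tlref{it1_lma:T} gives $h$ as the identity of $T$ and fixes all products among the five distinct elements $x,e,xe,y,d$; Lemma~\ref{lma:T}\tlref{it3_lma:T} identifies $d/{\gj}$ as the least $\gj$-class and pins down $d/{\gr}$ and $d/{\gl}$; and since $T/{\gj}$ is a semilattice with each $\gj$-class a rectangular band (Lemma~\ref{lma:band_j_cong}) and every element of $d/{\gj}$ has the form $\ell dr$ with $\ell\in\{h,x,y\}$ and $r\in\{h,x,e,xe\}$, the rest of the Cayley table of $T$ is forced by associativity and idempotence. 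Hence $T$ is isomorphic to one of four fixed bands, of sizes $9$, $13$, $13$, $17$, which are pairwise non-isomorphic; in particular the two of size $13$ are distinct, as confirmed by the computer check at the end of the proof of Lemma~\ref{lma:T}. Since $T\le S$, one of these four bands embeds into $S$.

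The only step that is not entirely routine is this last uniqueness claim — that the partial data recorded in Lemma~\ref{lma:T} reconstructs $T$ up to isomorphism in each case, so that ``the four bands'' really is a well-defined list of four pairwise non-isomorphic bands. This amounts to a finite, mechanical rebuilding of the multiplication table from the $\gj$-class decomposition together with associativity, and it is already subsumed by the computer verification quoted in the proof of Lemma~\ref{lma:T}. Granting it, the corollary follows at once.
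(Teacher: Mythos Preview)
Your two implications are sound and match the paper's approach: the ``if'' direction transports the witnesses of $T\not\models\lam$ through the embedding (the paper phrases this simply as ``none of the four bands satisfies \lam''), and the ``only if'' direction is immediate from Lemma~\ref{lma:T}. Your sketch of why the data in Lemma~\ref{lma:T} pin down $T$ up to isomorphism within each of the four cases is a reasonable addition that the paper leaves implicit.

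There is, however, a genuine gap caused by a misattribution. The \gap{} computation at the end of the proof of Lemma~\ref{lma:T} verifies only that bands satisfying each of the four cases \emph{exist}; it says nothing about uniqueness up to isomorphism, and in particular it does not show that the two $13$-element bands are non-isomorphic. You cite this computer check for both of these claims, so your proof as written does not actually establish the ``four non-isomorphic'' part of the statement. The paper's proof of the corollary is devoted almost entirely to precisely this point: it assumes an isomorphism $\alpha$ from the band of case~\ref{it4b_lma:T} to that of case~\ref{it4c_lma:T}, uses the $\gj$-structure to force $\alpha(h)=\bar h$, $\alpha(x)=\bar x$, then $\alpha(e)=\bar e$ (from $xe\ne e$), hence $\alpha(y)=\bar y$, and finally determines $\alpha(d)$ from $dx\ne d$ and $xd=d$, arriving at $\alpha(yd)=\alpha(d)$, contradicting injectivity. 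You should supply such an argument; if you prefer to defer it to a machine, make clear that it is a separate verification, distinct from the existence check already quoted in Lemma~\ref{lma:T}.
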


\begin{proof}
First we show that the two 13-element bands are not isomorphic. 
Let $T := \subuni{d,e,x,y,h}$ and $\aw T := \subuni{\aw d,\aw e,\aw x,\aw y,\aw h}$ be bands that fulfill properties \lref{it1_lma:T} to~\lref{it3_lma:T} of Lemma~\ref{lma:T}. Assume $T$  fulfills~\lref{it4_lma:T}\lref{it4b_lma:T}, and $\aw T$ fulfills~\lref{it4_lma:T}\lref{it4c_lma:T}. Suppose there is an isomorphism $\alpha \colon T \rightarrow \aw T$. Isomorphisms preserve the relations $\gjle$ and $\gj$. We have $T/{\gj}=\{\{h\},\{x\},\{e,xe,y\},d/{\gj}\}$ and a similar partition for $\aw T$.
Thus $\alpha$ maps $h$ to $\aw h$ and $x$ to $\aw x$. We apply $\alpha$ to the inequality $xe\neq e$ and obtain $\aw x \alpha(e) \neq \alpha(e)$. 
In order to fulfill the latter inequality and as $\alpha(e)\in\{\aw e,\aw x\aw e,\aw y\}$, we have $\alpha(e) = \aw e$. Therefore $\alpha(xe) = \aw x \aw e$, and thus $\alpha(y) = \aw y$.
By items~\lref{it3_lma:T} and~\lref{it4_lma:T}\lref{it4c_lma:T} of Lemma~\ref{lma:T} there is an $\aw \ell \in \{\aw h, \aw x\}$ such that
\begin{equation}\label{eq1_clr:witness2}
\alpha(d) \in \{\aw\ell\aw d, \aw\ell\aw d\aw x, \aw\ell\aw d\aw e, \aw\ell\aw d\aw x\aw e\}.
\end{equation}
Lemma~\ref{lma:T}\tlref{it3_lma:T} implies $dx \neq d$, and thus $\alpha(d)\aw x \neq \alpha(d)$.
In order to fulfill this inequality and condition~\eqref{eq1_clr:witness2}, $\alpha(d)$ must equal $\aw\ell\aw d$. 
It remains to determine $\aw\ell$. From $xd=d$ follows $\aw x \alpha(d) = \alpha(d)$. Thus $\alpha(d) = \aw x \aw d$. 
However $\alpha(yd) = \aw y\aw x\aw d = \aw x\aw d = \alpha(d)$.
Thus $\alpha$ is not injective, which yields a contradiction.
We proved that $T$ and $\aw T$ are not isomorphic.

Now the $(\Rightarrow)$ direction of the corollary is immediate from Lemma~\ref{lma:T}. 
The $(\Leftarrow)$ direction follows from the fact that none of the four bands described by Lemma~\ref{lma:T} satisfies \lam.
\end{proof}

\begin{lma}\label{lma:smpband_nphard_01}
Let $S$ be a finite band that does not satisfy \lam{} or \lamd{}. Then $\smp(S)$ is \np-hard.
\end{lma}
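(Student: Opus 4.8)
The plan is to reduce, in two steps, to proving that $\smp(T)$ is \np-hard for the (at most four, up to isomorphism) small bands $T$ supplied by Lemma~\ref{lma:T}. First, if $S$ does not satisfy \lamd{}, then the dual band $\dsgp S$ does not satisfy \lam{} by Lemma~\ref{lma:dualsgp_dualqid}; moreover, for any $A \subseteq S^n$ the subsemigroup of $S^n$ generated by $A$ and the subsemigroup of $(\dsgp S)^n$ generated by $A$ are the \emph{same} subset of $S^n$, since reversing the order of the factors in a product does not change which products occur. Hence $\smp(S)$ and $\smp(\dsgp S)$ are literally the same decision problem, and I may assume henceforth that $S$ does not satisfy \lam{}. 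By Corollary~\ref{clr:witness2} one of the four bands $T$ described in Lemma~\ref{lma:T} embeds into $S$; since $T^n$ is a subsemigroup of $S^n$, the subsemigroup generated by a subset of $T^n$ is the same whether formed inside $T^n$ or inside $S^n$, so sending an instance of $\smp(T)$ to the same tuples regarded as an instance of $\smp(S)$ is a polynomial-time reduction. Thus it suffices to show $\smp(T)$ is \np-hard, and I would do this uniformly for all four bands, invoking only items~\ref{it1_lma:T}--\ref{it3_lma:T} of Lemma~\ref{lma:T} (the partial multiplication table and the $\gr$- and $\gl$-classes of $d$), which are common to all of them; the cases of item~\ref{it4_lma:T} only record $|T|$.

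For the heart of the proof I would construct a polynomial-time many-one reduction from a suitable \np-complete problem to $\smp(T)$. The idea is to use one coordinate of a direct power $T^n$ for each constraint of the instance, together with a few auxiliary coordinates; one generator tuple for each ``choice'' available in the instance; and two further fixed ``frame'' tuples carrying the entries $d$ and $e$. The algebraic engine is the pair of relations $dxye = de$ and $dxe \neq de$, which hold in $T$ because $d,e,x,y,h$ witness $T \not\models \lam$ (Lemma~\ref{lma:T}\tlref{it2_lma:T}): if the choice-generators take only the values $x$ and $y$ in a given constraint coordinate, then there a product $d\,g_{i_1}\cdots g_{i_r}\,e$ equals $de$ exactly when some $g_{i_s}$ has value $y$, and equals $dxe$ (which differs from $de$) otherwise. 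Letting the target tuple $b$ carry $de$ in some coordinates and $dxe$ in others, a generating product for $b$ then encodes a selection of choices that must hit, resp.\ avoid, the prescribed coordinates; the auxiliary coordinates are designed to force any product equal to $b$ into the normal form ``frame, then a block of choice-generators, then frame'', and to forbid mutually incompatible choices from both occurring. One then verifies that $b$ lies in the subsemigroup generated by the constructed tuples if and only if the original instance has a solution---the forward direction by exhibiting an explicit product, the backward direction by a coordinatewise analysis of an arbitrary generating product---and that the whole construction is computable in polynomial time.

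I expect the main obstacle to be exactly this gadget construction and its correctness proof. The band $T$ leaves very little room: in the $9$-element case (Lemma~\ref{lma:T}\tlref{it4d_lma:T}) the $\gl$-class of $d$ is trivial, so one has essentially only the relations among $d,e,x,xe,y,h$ together with the four-element $\gr$-class $\{d,dx,de,dxe\}$, and every auxiliary coordinate must be realised within this. The delicate points are: (i) selecting the source problem so that its constraints are expressible by conditions of the form ``some chosen generator hits this coordinate'' and ``no chosen generator hits this coordinate'' plus whatever consistency conditions can be built; (ii) designing auxiliary coordinates that genuinely \emph{force} every generating product into the intended normal form, not merely make it possible; and (iii) ensuring that the ``spare'' generators (the tuples carrying $h$, $x$, $xe$ or $y$, and any redundant choice-tuples) cannot be combined into an unintended product equal to $b$. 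A final, more routine point is to confirm that the reduction refers only to table entries guaranteed by items~\ref{it1_lma:T}--\ref{it3_lma:T} of Lemma~\ref{lma:T}, so that one argument settles all four bands $T$.
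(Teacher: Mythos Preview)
Your plan is correct and matches the paper's proof almost exactly: reduce to the case $S\not\models\lam$ by duality, pass to the subsemigroup $T$ of Lemma~\ref{lma:T}, and reduce \sat{} to $\smp(T)$ by a coordinate gadget with two frame tuples (the paper's $u,v$) and choice tuples $a_j^0,a_j^1$, using $dxye=de\neq dxe$ for the clause coordinates and $2k$ auxiliary coordinates to force the normal form $u\,(\text{choices})\,v$ and exclude using both $a_j^0$ and $a_j^1$. The only cosmetic differences are that the paper's choice tuples carry entries $e$ and $1$ (rather than your $x$ and $y$) in the clause coordinates, and its target $b$ is the constant tuple $de$ rather than a mix of $de$ and $dxe$.
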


\begin{proof}
We may assume that $S$ does not satisfy \lam.
Let $d,e,x,y,h \in S$ witness $S\not\models\lam$ such that properties \lref{it1_lma:T} to~\lref{it3_lma:T} of Lemma~\ref{lma:T} hold. Denote $h$ by $1$ and let $T:=\subuni{d,e,x,y,1}$.
We reduce the Boolean satisfiability problem \sat{} to $\smp(T)$.
\sat{} is \np-complete \cite{Cook1971} and defined as follows.
\dproblem{\sat}{
clauses $C_1,\ldots, C_n \subseteq \{x_1,\ldots, x_k, \neg x_1,\ldots, \neg x_k\}$}
{Do truth values for $x_1,\ldots, x_k$ exist for which the Boolean formula 
$\phi( x_1,\ldots, x_k ) := (\bigvee C_1) \wedge\ldots\wedge (\bigvee C_n)$ 
is true?}
Fix a \sat{} instance $C_1,\ldots,C_n$ on $k$ variables.
For all $j\in\rg{k}$ we may assume that $x_j$ or $\neg x_j$ occurs in some clause $C_i$.
We define the corresponding $\smp(T)$ instance
\begin{align*}
A := \{u, v, a_1^0 ,\ldots, a_k^0, a_1^1,\ldots, a_k^1\} \subseteq T^{n+2k}, \quad b \in T^{n+2k}.
\end{align*}
The first $n$ positions of the tuples correspond to the $n$ clauses. The remaining $2k$ positions control the order in which tuples can be multiplied. Let
\begin{align*}
\arraycolsep=4pt%4pt
\def\arraystretch{1.25}%{1.25}
\begin{array}[c]{r@{}r@{}cccccccccc@{}l}
b     &{}:= (\acs&  de  &\cdots&  de  &  de  &\cdots&      &\cdots     &     &\cdots&  de &\acs), \\
u     &{}:= (\acs&  d   &\cdots&  d   &  d   &\cdots&      &\cdots     &     &\cdots&   d &\acs), \\
v     &{}:= (\acs&  xe  &\cdots&  xe  &  y   &\cdots&      &\cdots     &     &\cdots&   y &\acs), \\
a_j^0 &{}:= (\acs&      &      &      & \qh  &\cdots& \qh  &x\acs\acs e& \qh &\cdots& \qh &\acs)\quad\text{for } j\in\rg{k}, \\
a_j^1 &{}:= (\acs&      &      &      & \qh  &\cdots& \qh  &e\acs\acs x& \qh &\cdots& \qh &\acs)\quad\text{for } j\in\rg{k}. \vspace{-8pt} \\
%t     &{}:= (\acs&  xe  &\cdots&  xe  &  y   &      &  y   &e\hspace{2\arraycolsep}x& \qh &      & \qh &\acs,) \vspace{-8pt} \\
      &{}        &                                                        
\rlap{$\underbrace{\phantom{xe\acs\acs{\cdots}\acs\acs{}xe}\hspace{1.5pt}}_{n}$}\phantom{xe}  &&& 
\rlap{$\underbrace{\phantom{de\acs\acs{\cdots}\acs\acs{}1}\hspace{1.5pt}}_{2j-2}$}\phantom{de}  &&&
\underbrace{\phantom{x\acs\acs{}e}\hspace{1.5pt}}_{2}  &
\rlap{$\underbrace{\phantom{1\acs\acs{\cdots}\acs\acs{}de}\hspace{1.5pt}}_{2k-2j}$}\phantom{1}  &&& \\
\end{array}
\end{align*}
For $j\in\rg{k}$ and $i\in\rg{n}$ let
\begin{align}\label{eq10_lma:smpband_nphard_01}
a_j^0(i) &:= 
\begin{cases}
e & \text{if } \neg x_j \in C_i, \\
1 & \text{otherwise,}
\end{cases} \qquad
a_j^1(i) := 
\begin{cases}
e & \text{if } x_j \in C_i, \\
1 & \text{otherwise.}
\end{cases}
\end{align}
Note that the size of the \sat{} instance is at least linear in the number of clauses $n$ and in the number of variables $k$.
Hence we have a polynomial reduction from \sat{} to $\smp(T)$.
In the remainder of the proof we show that
\begin{align}\label{eq20_lma:smpband_nphard_01}
\text{the Boolean formula $\phi$ is satisfiable if and only if $b \in \subuni A$.}
\end{align}

For the $(\Rightarrow)$ direction let $z_1,\ldots, z_k \in \{0,1\}$ such that\ $\phi(z_1,\ldots, z_k) = 1$. We claim that 
\begin{align}\label{eq30_lma:smpband_nphard_01}
u a_1^{z_1} \cdots a_k^{z_k} v = b.
\end{align}
For $i \in \rg{n}$
the clause $\bigvee C_i$ is satisfied under the assignment $x_1\mapsto z_1,\ldots,x_k\mapsto z_k$.
Thus there is a $j\in\rg{k}$ such that
$x_j \in C_i$ and $z_j = 1$, or $\neg x_j \in C_i$ and $z_j = 0$. 
In both cases $a_j^{z_j}(i) = e$ by~\eqref{eq10_lma:smpband_nphard_01}.
Thus $a_1^{z_1} \cdots a_k^{z_k}(i) = e$, and hence
\[ u a_1^{z_1} \cdots a_k^{z_k} v(i) = dexe = de = b(i) \]
by the multiplication table in Lemma~\ref{lma:T}.
For $i\in\rg{2k}$ we have $a_1^{z_1}\cdots a_k^{z_k}(n+i) \in \{x,e\}$. Thus 
\[ u a_1^{z_1} \cdots a_k^{z_k} v(n+i) \in \{dxy,dey\} = \{de\}. \] 
We proved \eqref{eq30_lma:smpband_nphard_01}. Thus $b\in\subuni{A}$.

For the $(\Leftarrow)$ direction of~\eqref{eq20_lma:smpband_nphard_01} assume $b \in \subuni{A}$. 
It is easy to see that $b = ubv$. Thus there is a minimal $\ell \in \mathbb{N}_0$ such that $b = u g_1 \cdots g_\ell v$ for some $g_1,\ldots,g_\ell\in A$.
We claim that 
\begin{equation}\label{eq60:lma:smpband_nphard_01}
u,v \not\in \{g_1, \cdots, g_\ell\}. 
\end{equation} If $g_j = u$ for some $j \in \rg{\ell}$, 
then $u g_1 \cdots g_j = u$ by Lemma~\ref{lma:band_rule1}\tlref{it1_lma:band_rule1}. Thus $b = u g_{j+1} \cdots g_\ell v$, contradicting the minimality of $\ell$. By a similar argument $v \not\in \{g_1,\ldots,g_{\ell}\}$. We proved~\eqref{eq60:lma:smpband_nphard_01}. Thus 
\[b = u a_{j_1}^{z_1} \cdots a_{j_\ell}^{z_\ell} v\] 
for some $j_1,\ldots,j_\ell \in \rg{k}$ and $z_1,\ldots,z_\ell \in \{0,1\}$. For $r,s \in \rg{\ell}$ we claim:
\begin{equation}\label{eq70:lma:smpband_nphard_01}
\text{If $j_r=j_s$, then $z_r=z_s$.}
\end{equation} 
Suppose there is a minimal index $r\in\rg{\ell}$ such that $j_r=j_s$ and $z_r\neq z_s$ for some $s\in\{r+1,\ldots,\ell\}$.
Then there is an $i \in \{2j_r-1,2j_r\}$ such that $a_{j_r}^{z_r}(n+i)=x$ and $a_{j_s}^{z_s}(n+i)=e$. 
By the minimality of $r$ we have $a_{j_1}^{z_1}(n+i)=\ldots=a_{j_{r-1}}^{z_{r-1}}(n+i)=1$.
Thus $a_{j_1}^{z_1} \cdots a_{j_s}^{z_s}(n+i) = xe$, and hence  $a_{j_1}^{z_1} \cdots a_{j_\ell}^{z_\ell}(n+i) = xe$. Therefore
\begin{align*}
u a_{j_1}^{z_1} \cdots a_{j_\ell}^{z_\ell} v(n+i) = dxey = dxe \ne b(n+i),
\end{align*}
which contradicts our assumption. We proved~\eqref{eq70:lma:smpband_nphard_01}. 

Now we define an assignment 
\begin{align*}
\theta \colon x_{j_1}&\mapsto z_1,\ldots,x_{j_\ell}\mapsto z_\ell, \\
x_j &\mapsto 0 \quad\text{for } j \in \rg k \setminus \{ j_1,\ldots, j_\ell \},
\end{align*}
and show that 
\begin{equation}\label{eq90:lma:smpband_nphard_01}
\text{$\theta$ satisfies the formula $\phi$.}
\end{equation}

Let $i \in \rg{n}$. We show that $\theta$ satisfies $\bigvee C_i$. Observe that $a_{j_1}^{z_1} \cdots a_{j_\ell}^{z_\ell}(i)$ is either $1$ or $e$. In the first case $u a_{j_1}^{z_1} \cdots a_{j_k}^{z_k} v(i) = dxe \neq b(i)$, which is a contradiction. Thus $a_{j_1}^{z_1} \cdots a_{j_\ell}^{z_\ell}(i) = e$. 
Since not all factors in $a_{j_1}^{z_1} \cdots a_{j_k}^{z_k}(i)$ can be $1$, we have $a_{j_r}^{z_r}(i)=e$ for some $r\in\rg{\ell}$. 
By~\eqref{eq10_lma:smpband_nphard_01} either $z_r=1$ and $x_{j_r} \in C_i$, or $z_r=0$ and $\neg x_{j_r} \in C_i$. In both cases $\theta$ satisfies $\bigvee C_i$. We proved~\eqref{eq90:lma:smpband_nphard_01} and \eqref{eq20_lma:smpband_nphard_01}.
\end{proof}

Finally we state an alternative version of our dichotomy result Theorem~\ref{thm:dichotomy_bands_intro}.

\begin{thm}\label{thm:dichotomy_bands}
Let $S$ be a finite band. Then $\smp(S)$ is in \ptime{} if one of the following equivalent conditions holds:
\begin{enumerate}
\item\label{it10_thm:dichotomy_bands} 
$S$ satisfies \lam{} and \lamd{}.
\item\label{it20_thm:dichotomy_bands} 
None of the four bands given in Lemma~\ref{lma:T} embeds into $S$ or $\dsgp S$.
\end{enumerate}
Otherwise $\smp(S)$ is \np-complete.
\end{thm}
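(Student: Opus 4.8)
The plan is to assemble \thmref{thm:dichotomy_bands} from the machinery already built in the excerpt, namely the polynomial-time direction from Section~\ref{sec:quid} and the \np-hardness from Section~\ref{sec:np_completeness_qvs}. First I would show the equivalence of conditions \lref{it10_thm:dichotomy_bands} and \lref{it20_thm:dichotomy_bands}. By Corollary~\ref{clr:witness2}, $S$ fails \lam{} if and only if one of the four bands $T$ of Lemma~\ref{lma:T} embeds into $S$. Applying this to the dual semigroup and invoking Lemma~\ref{lma:dualsgp_dualqid}, $S$ fails \lamd{} if and only if one of the four bands embeds into $\dsgp S$. Hence ``$S$ satisfies both \lam{} and \lamd'' is exactly the negation of ``some $T$ embeds into $S$ or $\dsgp S$,'' which gives \lref{it10_thm:dichotomy_bands}$\iff$\lref{it20_thm:dichotomy_bands}. (One should note here that the four bands in Lemma~\ref{lma:T} are pairwise non-isomorphic, as established in the proof of Corollary~\ref{clr:witness2}, so ``the four bands'' is unambiguous.)

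Next I would dispatch the positive case: if \lref{it10_thm:dichotomy_bands} holds, then $\smp(S)$ is in \ptime{} by Theorem~\ref{thm:smp_in_p}, whose Algorithm~\ref{alg:smp} decides membership in polynomial time for any finite band satisfying \lam{} and \lamd. For the negative case, suppose \lref{it10_thm:dichotomy_bands} fails, i.e.\ $S$ does not satisfy \lam{} or does not satisfy \lamd. Then Lemma~\ref{lma:smpband_nphard_01} gives that $\smp(S)$ is \np-hard. Combined with Theorem~\ref{thm:smp_band_in_np}, which says $\smp(S)$ is in \np{} for every finite band, we conclude $\smp(S)$ is \np-complete in this case. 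Together with the \ptime{} case this is precisely the stated dichotomy.

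I do not anticipate a genuine obstacle here, since every ingredient is already proved in the excerpt; the only care needed is in the bookkeeping of the equivalence, making sure the dualization is applied correctly. Concretely, I would write: ``$S\models\lamd$'' $\iff$ ``$\dsgp S\models\lam$'' by Lemma~\ref{lma:dualsgp_dualqid}, and then ``$\dsgp S\models\lam$'' $\iff$ ``no band from Lemma~\ref{lma:T} embeds into $\dsgp S$'' by Corollary~\ref{clr:witness2}; intersecting with the analogous statement for $S$ itself yields \lref{it20_thm:dichotomy_bands}. One subtle point worth a sentence: in Lemma~\ref{lma:smpband_nphard_01} the hypothesis is ``$S$ does not satisfy \lam{} \emph{or} \lamd,'' which is exactly the failure of \lref{it10_thm:dichotomy_bands}, so no extra case analysis is needed. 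Thus the proof is a short paragraph citing Corollary~\ref{clr:witness2}, Lemma~\ref{lma:dualsgp_dualqid}, Theorem~\ref{thm:smp_in_p}, Lemma~\ref{lma:smpband_nphard_01}, and Theorem~\ref{thm:smp_band_in_np}.
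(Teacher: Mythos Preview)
Your proposal is correct and follows essentially the same route as the paper's proof, which cites Corollary~\ref{clr:witness2} for the equivalence, Theorem~\ref{thm:smp_in_p} for the \ptime{} direction, and Lemma~\ref{lma:smpband_nphard_01} together with Theorem~\ref{thm:smp_band_in_np} for \np-completeness. Your only addition is making the dualization step explicit via Lemma~\ref{lma:dualsgp_dualqid}, which the paper leaves implicit when invoking Corollary~\ref{clr:witness2} for the \lamd{} half of the equivalence.
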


\begin{proof}
The conditions~\lref{it10_thm:dichotomy_bands} and~\lref{it20_thm:dichotomy_bands} are equivalent by Corollary~\ref{clr:witness2}. If they are fulfilled, then $\smp(S)$ is in \ptime{} by Theorem~\ref{thm:smp_in_p}. Otherwise $\smp(S)$ is \np-hard by Lemma~\ref{lma:smpband_nphard_01} and in \np{} by Theorem~\ref{thm:smp_band_in_np}.
\end{proof}

\begin{proof}[Proof of Theorem~\ref{thm:dichotomy_bands_intro}]
Immediate from Theorem~\ref{thm:dichotomy_bands}.
\end{proof}

\section{Proof of Theorems~\ref{thm:genvar_diffcomp} and~\ref{thm:greatest_var_smp_tract}}
\label{sec:genvar_diffcomp}

In this section we prove the last two theorems of the introduction.

\begin{dfn}\label{dfn:s9s10}
Let $S_9$ and $S_{10}$ be the bands with the following multiplication tables.
\begin{align*}
\text{\footnotesize{$
\begin{array}{r|rrrrrrrrr}
S_9 & 1 & 2 & 3 & 4 & 5 & 6 & 7 & 8 & 9 \\
\hline
 1 & 1 & 2 & 3 & 4 & 5 & 6 & 7 & 8 & 9 \\
 2 & 2 & 2 & 4 & 4 & 5 & 6 & 7 & 8 & 9 \\
 3 & 3 & 3 & 3 & 3 & 3 & 6 & 7 & 8 & 9 \\
 4 & 4 & 4 & 4 & 4 & 4 & 6 & 7 & 8 & 9 \\
 5 & 5 & 5 & 5 & 5 & 5 & 6 & 7 & 8 & 9 \\
 6 & 6 & 7 & 8 & 9 & 8 & 6 & 7 & 8 & 9 \\
 7 & 7 & 7 & 9 & 9 & 8 & 6 & 7 & 8 & 9 \\
 8 & 8 & 8 & 8 & 8 & 8 & 6 & 7 & 8 & 9 \\
 9 & 9 & 9 & 9 & 9 & 9 & 6 & 7 & 8 & 9 \\
\end{array}$}} \qquad
\text{\footnotesize{$
\begin{array}{r|rrrrrrrrrr}
S_{10}&  1 &  2 &  3 &  4 &  5 &  6 &  7 &  8 &  9 & 10 \\ 
\hline
    1 &   1 &  2 &  3 &  4 &  5 &  6 &  7 &  8 &  9 & 10 \\
    2 &   2 &  2 &  4 &  4 &  5 &  6 &  7 &  8 &  9 & 10 \\
    3 &   3 &  3 &  3 &  3 &  3 &  6 &  7 &  8 &  9 & 10 \\
    4 &   4 &  4 &  4 &  4 &  4 &  6 &  7 &  8 &  9 & 10 \\
    5 &   5 &  5 &  5 &  5 &  5 &  6 &  7 &  8 &  9 & 10 \\
    6 &   6 &  7 &  8 &  9 & 10 &  6 &  7 &  8 &  9 & 10 \\
    7 &   7 &  7 &  9 &  9 & 10 &  6 &  7 &  8 &  9 & 10 \\
    8 &   8 &  8 &  8 &  8 &  8 &  6 &  7 &  8 &  9 & 10 \\
    9 &   9 &  9 &  9 &  9 &  9 &  6 &  7 &  8 &  9 & 10 \\
   10 &  10 & 10 & 10 & 10 & 10 &  6 &  7 &  8 &  9 & 10 \\
\end{array}$}}
\end{align*}
\end{dfn}

\noindent
Note that $S_9$ is isomorphic to the $9$-element band from Lemma~\ref{lma:T}\tlref{it4_lma:T}\lref{it4d_lma:T}
by renaming the elements as follows.
\newcommand{\sw}[1]{\makebox[\widthof{$dxe$\ }]{#1}} % creates box around centered content
\[ \arraycolsep=0pt
\begin{array}{ccccccccc}
 h &  x &  e &  xe &  y &  d &  dx &  de &  dxe \\
\sw 1 & \sw 2 & \sw 3 &  \sw 4 & \sw 5 & \sw 6 &  \sw 7 &  \sw 8 &  \sw 9 \\
\end{array} \]

For the next result recall $G_n$, $H_n$, and $I_n$ from Definition~\ref{dfn:bands_Gn_Hn_In}.

\begin{lma}\label{lma:S9S10_var}
The bands $S_9$ and $S_{10}$ both generate the variety ${[\dwrd G_3 \approx \dwrd I_3]}$.
Furthermore $S_9$ is the homomorphic image of $S_{10}$ under
\[ \alpha \colon S_{10} \to S_9,\ x \mapsto \begin{cases} 
x & \text{if } x\le9, \\
8 & \text{if } x= 10.
\end{cases}\]
\end{lma}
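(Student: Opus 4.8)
The plan is to reduce the statement to a few finite computations with the multiplication tables of $S_9$ and $S_{10}$, glued together by the lattice of band varieties in Figure~\ref{fig:varieties_of_bands}. I would first settle the homomorphism assertion, since the rest relies on it. The map $\alpha$ is plainly surjective, so it suffices to check it is a homomorphism, i.e.\ that the partition of $S_{10}$ identifying $8$ with $10$ and separating all other elements is a congruence with quotient $S_9$. This is immediate from the tables: deleting row and column $10$ from the table of $S_{10}$ produces the table of $S_9$ once each entry equal to $10$ (these occur only in positions $(6,5)$ and $(7,5)$) is replaced by $8$; and in $S_{10}$ one has $z8 = 8$ and $z10 = 10$ for every $z$, while $8z$ and $10z$ agree whenever $z \ge 6$ and equal $8$, resp.\ $10$, whenever $z \le 5$. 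Hence $\ker\alpha$ is a congruence, $S_{10}/{\ker\alpha} \cong S_9$, and $S_9$ is a homomorphic image of $S_{10}$; in particular $\var V(S_9) \subseteq \var V(S_{10})$, and any identity failing in $S_9$ also fails in $S_{10}$.

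Next I would prove $\var V(S_{10}) \subseteq [\dwrd G_3 \approx \dwrd I_3]$, i.e.\ that $S_{10}$ satisfies the identity $x_2x_1x_3 \approx x_2x_1x_2x_3x_2x_1x_3$, which is $\dwrd G_3 \approx \dwrd I_3$ written out from Definition~\ref{dfn:bands_Gn_Hn_In}. This is a finite verification --- at most $10^3$ substitutions, far fewer after splitting along the semilattice congruence of Lemma~\ref{lma:band_j_cong} --- of the same routine kind as the case analysis in the proof of Lemma~\ref{lma:T}. Since $S_9$ is a homomorphic image of $S_{10}$, it satisfies the same identity, so $\var V(S_9) \subseteq \var V(S_{10}) \subseteq [\dwrd G_3 \approx \dwrd I_3]$.

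It then remains to upgrade these inclusions to equalities. By Figure~\ref{fig:varieties_of_bands}, the variety $[\dwrd G_3 \approx \dwrd I_3]$ has a unique lower cover, namely $[\dwrd G_3 G_4 \approx \dwrd I_3 H_4]$; since the part of the lattice of band varieties lying below $[\dwrd G_3 \approx \dwrd I_3]$ is finite, every band variety strictly contained in $[\dwrd G_3 \approx \dwrd I_3]$ is contained in $[\dwrd G_3 G_4 \approx \dwrd I_3 H_4]$. So it suffices to exhibit one assignment $x_1 \mapsto s_1, \dots, x_4 \mapsto s_4$ of elements of $S_9$ at which the words $\dwrd G_3 G_4$ and $\dwrd I_3 H_4$ evaluate differently; verifying a candidate just means evaluating two words, of lengths $7$ and $17$, in $S_9$. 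With such a witness, $S_9 \not\models \dwrd G_3 G_4 \approx \dwrd I_3 H_4$, so $\var V(S_9) = [\dwrd G_3 \approx \dwrd I_3]$, and then $\var V(S_9) \subseteq \var V(S_{10}) \subseteq [\dwrd G_3 \approx \dwrd I_3]$ forces $\var V(S_{10}) = [\dwrd G_3 \approx \dwrd I_3]$ as well.

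The only step that is not purely mechanical is finding that witnessing assignment inside the $9$-element band $S_9$ --- equivalently, certifying that $S_9$ does not lie in the unique lower cover of $[\dwrd G_3 \approx \dwrd I_3]$. All remaining work is bookkeeping with the two multiplication tables and the published Hasse diagram of the lattice of band varieties.
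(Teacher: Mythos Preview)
Your plan is correct and follows the same outline as the paper: verify $\alpha$ is a homomorphism from the tables, check that the identity $\dwrd G_3 \approx \dwrd I_3$ holds (the paper cites a GAP computation, you spell out the finite check), and then rule out all proper subvarieties using Figure~\ref{fig:varieties_of_bands}. The one tactical difference is that the paper, instead of working with the lower cover $[\dwrd G_3 G_4 \approx \dwrd I_3 H_4]$, observes that this cover is contained in $[G_4 \approx H_4]$ and falsifies the much shorter identity $G_4 \approx H_4$ at the concrete point $v=(2,1,3,6)$, where $G_4^{S}(v)=9\neq 8=H_4^{S}(v)$ for $S\in\{S_9,S_{10}\}$; this supplies exactly the witness you left open and avoids evaluating the length-$17$ word $\dwrd I_3 H_4$.
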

\begin{proof}
From the multiplication tables it is immediate that $\alpha$ is a homomorphism.
Using the software \gap{} \cite{GAP4,GAP_semigroups} it is easy to show that both $S_9$ and $S_{10}$ satisfy the identity ${\dwrd G_3 \approx \dwrd I_3}$. 
It remains to show that $S_9$ and $S_{10}$ do not belong to a proper subvariety of ${[\dwrd G_3 \approx \dwrd I_3]}$.
By Figure~\ref{fig:varieties_of_bands} every proper subvariety of $[\dwrd G_3 \approx \dwrd I_3]$ is contained in ${[G_4 \approx H_4]}$.
For $v:=(2,1,3,6)$ and $S\in\{S_9,S_{10}\}$ we have 
\[ G_4^{S}(v) = 6123 = 9, \quad H_4^{S}(v) = 6123613123 = 8.\]
Thus neither $S_9$ nor $S_{10}$ satisfies $G_4 \approx H_4$. 
\end{proof}

\begin{lma}\label{lma:S10lambda_S9not}
The band $S_{10}$ satisfies $\lam$ and $\lamd$, whereas $S_9$ does not satisfy $\lam$.
\end{lma}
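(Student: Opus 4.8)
The statement breaks into three assertions: $S_9\not\models\lam$, $S_{10}\models\lam$, and $S_{10}\models\lamd$. The first is immediate. By the discussion following Definition~\ref{dfn:s9s10}, $S_9$ is isomorphic to the $9$-element band of Lemma~\ref{lma:T}\tlref{it4_lma:T}\lref{it4d_lma:T}, and by Lemma~\ref{lma:T}\tlref{it2_lma:T} the distinguished generators $d,e,x,y,h$ of that band witness $T\not\models\lam$; hence $S_9\not\models\lam$. (For a self-contained check one may instead read off the table of $S_9$ with $(d,e,x,y,h)=(6,3,2,5,1)$: then $ded=d$, $exe=e$, $eye=e$, $hx=x$, $he=e$, and $dxye=de$, while $dxe=9\neq 8=de$.)

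For the two positive assertions the plan is to invoke the embedding characterisation of Corollary~\ref{clr:witness2}: a finite band satisfies $\lam$ precisely when none of the four bands of Lemma~\ref{lma:T} embeds into it. By Lemma~\ref{lma:T}\tlref{it4_lma:T} the three bands other than $S_9$ have $13$, $13$, and $17$ elements, so none of them embeds into the $10$-element $S_{10}$ or into $\dsgp{S_{10}}$. Using also Lemma~\ref{lma:dualsgp_dualqid} (so that $S_{10}\models\lamd$ iff $\dsgp{S_{10}}\models\lam$ iff no band of Lemma~\ref{lma:T} embeds into $\dsgp{S_{10}}$, equivalently no dual of such a band embeds into $S_{10}$), it therefore suffices to show that neither $S_9$ nor $\dsgp{S_9}$ embeds into $S_{10}$.

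The case of $\dsgp{S_9}$ is the easy one. By Lemma~\ref{lma:T}\tlref{it3_lma:T} the bottom $\gj$-class of $S_9$ is a $4$-element right-zero band, so the bottom $\gj$-class of $\dsgp{S_9}$ is a $4$-element left-zero band. If $\dsgp{S_9}$ embedded into $S_{10}$, the image of this class would be a $4$-element left-zero subsemigroup of $S_{10}$; being a rectangular subsemigroup its elements are mutually $\gj$-equivalent in $S_{10}$, hence it lies inside one $\gj$-class of $S_{10}$; the only $\gj$-class of $S_{10}$ with at least $4$ elements is the bottom one $\{6,\dots,10\}$, which is right-zero; but a band that is both left-zero and right-zero has at most one element, a contradiction.

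The main obstacle is excluding an embedding $\phi\colon S_9\to S_{10}$, since here cardinality gives nothing: the $4$-element right-zero bottom $\gj$-class of $S_9$ does fit inside $\{6,\dots,10\}$. The plan is to pin $\phi$ down from the $\gj$-structure. The quotients $S_9/{\gj}$ and $S_{10}/{\gj}$ are both four-element chains, with top class $\{1\}$, then $\{2\}$, then a three-element (rectangular) class, then the bottom class. Since $h=1$ is the identity of $S_9$, $\phi(1)$ is an identity of the image; the only $9$-element subsemigroup of $S_{10}$ avoiding $1$ is $\{2,\dots,10\}$, which has no identity, so $1$ lies in the image and, a band having at most one identity, $\phi(1)=1$. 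The image of the bottom class of $S_9$ is a rectangular subsemigroup, hence lies in a single $\gj$-class of $S_{10}$ of size $\ge 4$, i.e. in $\{6,\dots,10\}$; the image of the three-element class $\{e,xe,y\}$ likewise lies in one $\gj$-class of size $\ge 3$, and it cannot lie in $\{6,\dots,10\}$ (that would force $7$ distinct image points into a $5$-element set), so it equals $\{3,4,5\}$; since $\phi$ preserves $\gjle$ and $x$ lies above $e$, the element $\phi(2)$ must lie above the class $\{3,4,5\}$ in $S_{10}/{\gj}$, and being distinct from $1$ and from $\{3,4,5\}$ it is forced to be $2$. Now $x\cdot e=xe$ with $\phi$ injective gives $2\cdot\phi(e)=\phi(xe)$ with $\phi(e)\neq\phi(xe)$, which in $S_{10}$ is only possible for $\phi(e)=3$; so $\phi(e)=3$, $\phi(xe)=4$, $\phi(y)=5$. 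Next $d\cdot x=dx$ with $d\neq dx$ (Lemma~\ref{lma:T}\tlref{it3_lma:T}) gives $\phi(d)\cdot 2\neq\phi(d)$, and in $\{6,\dots,10\}$ only $6$ is moved by right multiplication by $2$, so $\phi(d)=6$. Finally $dy=de$ in $S_9$ forces $\phi(d)\cdot 5=\phi(d)\cdot 3$, i.e. $6\cdot 5=6\cdot 3$ in $S_{10}$, i.e. $10=8$, a contradiction. Thus $S_9$ does not embed into $S_{10}$ and the lemma follows. I expect the only fiddly step to be the verification that the images of the $\gj$-classes are forced as claimed; a shortcut bypassing this is to verify $S_{10}\models\lam$ and $S_{10}\models\lamd$ directly by a finite computation over $S_{10}^5$, for instance with \gap{}, as is done for several other facts in the paper.
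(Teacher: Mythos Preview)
Your argument is correct. Both you and the paper use the same overall strategy: invoke Corollary~\ref{clr:witness2} (and its dual via Lemma~\ref{lma:dualsgp_dualqid}), rule out the $13$- and $17$-element bands by cardinality, and then show that neither $S_9$ nor $\dsgp S_9$ embeds into $S_{10}$. The execution of this last step differs. The paper observes from the multiplication table that the $9$-element subsemigroups of $S_{10}$ are exactly the five sets $S_{10}\setminus\{s\}$ with $s\in\{1,2,3,5,6\}$, and then asserts (as a finite check) that none of these is isomorphic to $S_9$ or to $\dsgp S_9$. You instead analyse a hypothetical embedding $\phi\colon S_9\to S_{10}$ directly, pinning down $\phi$ element by element from the $\gj$-structure until the relation $dy=de$ in $S_9$ forces $10=8$ in $S_{10}$; and you dispose of $\dsgp S_9$ with the left-zero/right-zero clash in the bottom $\gj$-class. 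Your route is longer but fully self-contained and explains \emph{why} the embedding fails, whereas the paper's route is terser but leaves the isomorphism checks as an exercise (or a computer verification). Either is acceptable here.
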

\begin{proof}
By Corollary~\ref{clr:witness2} $S_9$ does not satisfy \lam{}. 
From the multiplication table for $S_{10}$ we see that every $9$-element subsemigroup is of the form
$S_{10}\setminus\{s\}$ for $s\in\{1,2,3,5,6\}$. None of these semigroups is isomorphic to $S_9$ or its dual $\dsgp S_{9}$. Thus $S_{10}$ satisfies $\lam$ and $\lamd$ by Corollary~\ref{clr:witness2}.
\end{proof}

\begin{proof}[Proof of Theorem~\ref{thm:genvar_diffcomp}]
Items \lref{it1_thm:genvar_diffcomp} and~\lref{it2_thm:genvar_diffcomp} follow from Lemma~\ref{lma:S9S10_var}, and item~\lref{it3_thm:genvar_diffcomp} from Theorem~\ref{thm:dichotomy_bands} and Lemma~\ref{lma:S10lambda_S9not}.
\end{proof}

\begin{lma}\label{lma:if_g4h4_then_lam}
Let $S$ be a finite band that satisfies $G_4\approx H_4$. Then $S$ satisfies \lam.
\end{lma}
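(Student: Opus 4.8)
The plan is to prove the contrapositive: if $S$ is a finite band that does not satisfy \lam, then $S$ does not satisfy $G_4\approx H_4$. By Corollary~\ref{clr:witness2}, if $S\not\models\lam$ then one of the four bands $T$ from Lemma~\ref{lma:T} embeds into $S$, so it suffices to show that none of those four bands satisfies $G_4\approx H_4$. Since the identity $G_4\approx H_4$ is inherited by subsemigroups, it is in fact enough to exhibit, inside the band $T$ of Lemma~\ref{lma:T} (in each of its four incarnations, or uniformly using only the elements $d,e,x,y,h$ that are common to all four), a tuple $v$ of elements witnessing $G_4^{T}(v)\ne H_4^{T}(v)$.

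The key step is therefore a direct computation. Recall from Definition~\ref{dfn:bands_Gn_Hn_In} that
\[ G_4 = x_4 x_2 x_1 x_3, \qquad H_4 = x_4 x_2 x_1 x_3 x_4 x_2 x_3 x_2 x_1 x_3. \]
I would substitute $x_4\mapsto d$, $x_2\mapsto x$, $x_1\mapsto y$, $x_3\mapsto e$ (mirroring the choice $v=(2,1,3,6)$, i.e.\ $(x_1,x_2,x_3,x_4)=(y,x,e,d)$, that already worked for $S_9$ and $S_{10}$ in Lemma~\ref{lma:S9S10_var}). Using only the partial multiplication table of Lemma~\ref{lma:T}\ref{it1_lma:T} together with $\qd\gjle\qe\gjle\qx,\qy$ (hence $hx=x$, $he=e$, $exe=e$, etc.), one computes $G_4^{T}(v) = d\,x\,y\,e$ and $H_4^{T}(v) = d\,x\,y\,e\,d\,x\,e\,x\,y\,e$. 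Collapsing the second word using the table — for instance $dxye$ lies in the $\gj$-class of $d$ and equals $de$ by~\eqref{eq30_lma:T}, and then repeated use of idempotence and $xe = x\cdot e$, $ye=y$ — I expect $H_4^{T}(v)$ to reduce to $dxe$. Since $d,dx,de,dxe$ are distinct (Lemma~\ref{lma:witness1}\ref{lma:witness1_it02}, or Lemma~\ref{lma:T}\ref{it3_lma:T}), we get $G_4^{T}(v)=de\ne dxe=H_4^{T}(v)$, so $T\not\models G_4\approx H_4$.

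The main obstacle is bookkeeping rather than conceptual: I must make sure the chosen substitution actually produces words whose value can be evaluated purely from the given partial table (all intermediate products must land among $x,e,xe,y,d$ or in the $\gj$-class of $d$, whose products with $h,x,e,xe,y$ are listed), and that the reduction of $H_4^{T}(v)$ genuinely ends at $dxe$ and not at $de$. If the naive substitution $(y,x,e,d)$ does not keep all partial products inside the tabulated region, the fallback is to instead invoke Figure~\ref{fig:varieties_of_bands}: the variety $[G_4\approx H_4]$ lies below $[\dwrd G_4 G_4\approx\dwrd H_4 H_4]$, so it would be enough to check $\dwrd G_4 G_4\approx \dwrd H_4 H_4$ fails on $T$; but I expect the direct computation with $G_4\approx H_4$ and the tuple $(y,x,e,d)$ to go through cleanly, exactly as it did for $S_9$ and $S_{10}$, since $S_9$ is one of the four bands of Lemma~\ref{lma:T} and the computation there already gave $G_4^{S_9}(2,1,3,6)=9=dxe$ — wait, that is $dxe$, not $de$; so more carefully I would pick the substitution so that the two sides land on $de$ and $dxe$ respectively, reading off from the $S_9$ computation which assignment of $\{1,2,3,6\}=\{h,x,e,d\}$ to $(x_1,x_2,x_3,x_4)$ does the job, and transport that identical assignment to the abstract $T$.
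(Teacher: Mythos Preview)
Your computation is correct and is in fact exactly the one the paper carries out: with $v=(y,x,e,d)$ one gets $G_4^S(v)=dxye$ and $H_4^S(v)=dxye\,d\,xe\,xye$, and the simplifications $dxyed=d$ and $exye=e$ (both instances of Lemma~\ref{lma:band_rule1}\ref{it1_lma:band_rule1}) collapse the latter to $dxe$. Combined with the hypothesis $dxye=de$ from the premise of \lam, the identity $G_4\approx H_4$ forces $de=dxe$, which is the conclusion of \lam.

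The only substantive difference is the logical packaging. You argue the contrapositive and invoke Corollary~\ref{clr:witness2} to pass to one of the four concrete bands $T$ of Lemma~\ref{lma:T}, then run the computation there. The paper instead proves the implication directly: start from \emph{arbitrary} $d,e,x,y,h\in S$ satisfying the premise of \lam, apply $G_4\approx H_4$ to $v=(y,x,e,d)$, and read off $de=dxe$. This is strictly simpler, because the reductions $dxyed=d$ and $exye=e$ use only $d\gjle e\gjle x,y$, which is already part of the premise of \lam; no appeal to the structure theorem for witnesses (Lemma~\ref{lma:T}/Corollary~\ref{clr:witness2}) is needed. Your route works, but it introduces an unnecessary dependency on those earlier, harder results.

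Your hesitation at the end stems from comparing with Lemma~\ref{lma:S9S10_var}, where the tuple $(2,1,3,6)$ corresponds to $(x_1,x_2,x_3,x_4)=(x,h,e,d)$, \emph{not} $(y,x,e,d)$; that is why the roles of $de$ and $dxe$ appear swapped there. Both substitutions separate $G_4$ from $H_4$, and either would serve your contrapositive argument, but $(y,x,e,d)$ is the one that makes the direct proof go through.
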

\begin{proof}
\renewcommand{\qh}{h}
Let $d,e,x,y,\qh \in S$ such that 
\begin{align*}
\arraycolsep=0pt
\begin{array}{c}
\begin{array}{rl}
\qd \qx \qy \qe {}&{} = \qd \qe \\
\qh \qx {}&{} = \qx \\
\qh \qe {}&{} = \qe \end{array}\\
\qd \gjle \qe \gjle \qx, \qy
\end{array}
\end{align*}
Set $v := (y,x,e,d)$.
From $S\models G_4 \approx H_4$ and the definitions of $G_4$ and $H_4$ follows
\begin{align*} 
dxye = G_4^S(v) = H_4^S(v) = dxyedxexye. 
\end{align*}
Lemma~\ref{lma:band_rule1}\tlref{it1_lma:band_rule1} implies $dxyed=d$ and $exye=e$.
Therefore the right hand side equals $dxe$, and thus $de=dxye=dxe$. 
Hence $S$ satisfies $\lam$.
\end{proof}

\begin{proof}[Proof of Theorem~\ref{thm:greatest_var_smp_tract}]
From Figure~\ref{fig:varieties_of_bands} we see that
\[ [\dwrd G_4 G_4\approx \dwrd H_4 H_4] = [G_4\approx H_4]\cap[\dwrd G_4 \approx \dwrd H_4]. \] 
Let $S$ be a finite band in this variety.
Since $S$ and $\dsgp S$ belong to $[G_4\approx H_4]$, both bands satisfy \lam{} by Theorem~\ref{lma:if_g4h4_then_lam}. Thus $S$ satisfies \lam{} and \lamd. By Theorem~\ref{thm:dichotomy_bands} $\smp(S)$ is in \ptime.

Now let $\mathcal V$ be a variety of bands greater than $[\dwrd G_4 G_4\approx \dwrd H_4 H_4]$.
First assume $\mathcal V \not\subseteq {[G_4\approx H_4]}$. Then ${[\dwrd G_3\approx \dwrd I_3]} \subseteq \mathcal V$ by Figure~\ref{fig:varieties_of_bands}. By Lemma~\ref{lma:S9S10_var} $S_9$ belongs to $\mathcal V$.
If $\mathcal V \not\subseteq [\dwrd G_4\approx \dwrd H_4]$, then the dual band $\dsgp S_9$ belongs to $\mathcal V$ by a similar argument. 
Since the \smp{} for both $S_9$ and $\dsgp S_9$ is \np-complete by Lemma~\ref{lma:T}, the result follows.
\end{proof}

\section{Conclusion and open problems}

In ~\cite{BMS2015} it was shown that the \smp{} for every finite semigroup is in \pspace{}.
Moreover, semigroups with \np{}-complete and \pspace{}-complete \smp{}s were provided.
In the present paper we presented the first examples of completely regular semigroups with \np-hard \smp{}. We  established a \ptime/\np-complete dichotomy for the case of bands.
However, it is unknown if the \smp{} for every completely regular semigroup is in \np{}.
Thus the following problem is open.

\begin{prb}
Is the \smp{} for every finite completely regular semigroup in \np{}?
In particular, is there a \ptime/\np-complete dichotomy for completely regular semigroups?
\end{prb}

So far there is no algebra known for which the \smp{} is neither in \ptime{} nor complete in \np{}, \pspace{}, or \exptime{}.
In the case of semigroups, the following open problem arises.

\begin{prb}
Is there a \ptime/\np-complete/\pspace-complete trichotomy for the \smp{} for semigroups?
\end{prb}

\bibliographystyle{abbrv} % amsplain, abbrv, alpha
%\bibliography{bib}

\begin{thebibliography}{10}

\bibitem{BMS2015}
A.~Bulatov, P.~Mayr, and M.~Steindl.
\newblock The subpower membership problem for semigroups, submitted.
\newblock Available at \verb+http://arxiv.org/pdf/1603.09333v1.pdf+.

\bibitem{Cook1971}
S.~A. Cook.
\newblock Characterizations of pushdown machines in terms of time-bounded
  computers.
\newblock {\em J. Assoc. Comput. Mach.}, 18:4--18, 1971.

\bibitem{FHL1980}
M.~Furst, J.~Hopcroft, and E.~Luks.
\newblock Polynomial-time algorithms for permutation groups.
\newblock In {\em 21st {A}nnual {S}ymposium on {F}oundations of {C}omputer
  {S}cience ({S}yracuse, {N}.{Y}., 1980)}, pages 36--41. IEEE, New York, 1980.

\bibitem{GAP4}
The GAP~Group.
\newblock {\em {GAP -- Groups, Algorithms, and Programming, Version 4.7.8}},
  2015.

\bibitem{GP1988}
J.~A. Gerhard and M.~Petrich.
\newblock Certain characterizations of varieties of bands.
\newblock {\em Proc. Edinburgh Math. Soc. (2)}, 31(2):301--319, 1988.

\bibitem{GP1989}
J.~A. Gerhard and M.~Petrich.
\newblock Varieties of bands revisited.
\newblock {\em Proc. London Math. Soc. (3)}, 58(2):323--350, 1989.

\bibitem{Howie1995}
J.~M. Howie.
\newblock {\em Fundamentals of semigroup theory}, volume~12 of {\em London
  Mathematical Society Monographs. New Series}.
\newblock The Clarendon Press, Oxford University Press, New York, 1995.
\newblock Oxford Science Publications.

\bibitem{IMMVW2010}
P.~Idziak, P.~Markovi{\'c}, R.~McKenzie, M.~Valeriote, and R.~Willard.
\newblock Tractability and learnability arising from algebras with few
  subpowers.
\newblock {\em SIAM J. Comput.}, 39(7):3023--3037, 2010.

\bibitem{Kozen1977b}
D.~Kozen.
\newblock Lower bounds for natural proof systems.
\newblock In {\em 18th {A}nnual {S}ymposium on {F}oundations of {C}omputer
  {S}cience ({P}rovidence, {R}.{I}., 1977)}, pages 254--266. IEEE Comput. Sci.,
  Long Beach, Calif., 1977.

\bibitem{Kozik2008}
M.~Kozik.
\newblock A finite set of functions with an {EXPTIME}-complete composition
  problem.
\newblock {\em Theoretical Computer Science}, 407(1\textendash3):330--341,
  2008.

\bibitem{Mayr2012}
P.~Mayr.
\newblock The subpower membership problem for {M}al'cev algebras.
\newblock {\em Internat. J. Algebra Comput.}, 22(7):1250075, 23, 2012.

\bibitem{GAP_semigroups}
J.~D. Mitchell, M.~Delgado, J.~East, A.~Egri-Nagy, J.~Jonusas, M.~Pfeiffer,
  B.~Steinberg, J.~Smith, M.~Torpey, and W.~Wilson.
\newblock {\em {Semigroups -- a GAP package, Version 2.5, 01/06/2015}}.

\bibitem{smppspace}
M.~Steindl.
\newblock On semigroups with pspace-complete subpower membership problem,
  submitted.

\bibitem{Willard2007}
R.~Willard.
\newblock Four unsolved problems in congruence permutable varieties.
\newblock Talk at International Conference on Order, Algebra, and Logics,
  Vanderbilt University, Nashville, June 12\textendash16, 2007.

\bibitem{Zweckinger2013}
S.~Zweckinger.
\newblock Computing in direct powers of expanded groups.
\newblock Master's thesis, Johannes Kepler University Linz, Austria, 2013.
\newblock Available
  at\\{}\verb+http://www.jku.at/algebra/content/e151770/e177745/e269069+.

\end{thebibliography}

\end{document}